\newtheorem{thm}{Theorem}[section]
\newtheorem{lem}[thm]{Lemma}
\newtheorem{lem-dfn}[thm]{Lemma-Definition}
\newtheorem{prop}[thm]{Proposition}
\newtheorem{cor}[thm]{Corollary}
\newtheorem{ass}[thm]{Assumption}
\newtheorem{introthm}{Theorem}
\theoremstyle{definition}
\newtheorem{defn}[thm]{Definition}
\newtheorem{ex}[thm]{Example}
\newtheorem{prob}[thm]{Problem}
\theoremstyle{remark}
\newtheorem{rem}[thm]{Remark}
\numberwithin{equation}{section}
\newcommand{\thmref}[1]{Theorem~\ref{#1}}
\newcommand{\lemref}[1]{Lemma~{\ref{#1}}}
\newcommand{\corref}[1]{Corollary~\ref{#1}}
\newcommand{\proref}[1]{Proposition~\ref{#1}}
\newcommand{\remref}[1]{Remark~\ref{#1}}
\newcommand{\defref}[1]{Definition~\ref{#1}}
\newcommand{\assref}[1]{Assumption~\ref{#1}}
\newcommand{\exref}[1]{Example~\ref{#1}}
\newcommand{\sref}[1]{Section~\ref{#1}}
\DeclareMathOperator{\Spec}{Spec}
\DeclareMathOperator{\spec}{Spec}
\DeclareMathOperator{\proj}{Proj}
\DeclareMathOperator{\supp}{Supp}
\DeclareMathOperator{\Img}{Im}
\DeclareMathOperator{\emb}{embdim}
\DeclareMathOperator{\di}{div}
\DeclareMathOperator{\sing}{Sing} 
\DeclareMathOperator{\mult}{mult}
\newcommand{\m}{\mathfrak m}
\newcommand{\q}{\mathfrak q}
\newcommand{\Z}{\mathbb Z}
\newcommand{\Q}{\mathbb Q}
\newcommand{\C}{\mathbb C}
\newcommand{\cC}{\mathcal C}
\newcommand{\cL}{\mathcal L}
\newcommand{\cO}{\mathcal O}
\renewcommand{\:}{\colon}
\newcommand{\ol}[1]{\overline {#1}}
\newcommand{\defset}[2]{{\left\{#1\,\left| \,#2 \right. \right\}}}
\newcommand{\zb}[1]{Z_{B_{#1}}}
\title[Cohomology of ideals in surface singularities]{Cohomology of ideals in elliptic surface singularities}
\author{Tomohiro Okuma}
\address{Department of Mathematical Sciences,
Faculty of Science, Yamagata University, Yamagata, 990-8560, Japan.}
\email{okuma@sci.kj.yamagata-u.ac.jp}
\subjclass[2010]{Primary 14J17;
Secondary 14B05}
\keywords{surface singularity, elliptic singularity, $p_g$-cycle, $p_g$-ideal}
\thanks{This work was partially supported by JSPS  KAKENHI 
Grant Number 26400064.}
\begin{document}

\begin{abstract}
We introduce the the normal reduction number of two-dimensional normal singularities and prove that 
elliptic singularity has normal reduction number two.
We also prove that for a  two-dimensional normal singularity which is not rational, 
it is Gorenstein and its maximal ideal is a $p_g$-ideal if and only if it is a maximally elliptic singularity of degree $1$.
\end{abstract}

\maketitle

%\tableofcontents

\section{Introduction}
Let $(A,\m)$ be an excellent two-dimensional normal local domain containing an algebraically closed field isomorphic to the residue field. 
In this paper, we simply call such a local ring a {\em normal surface singularity}.
 Lipman \cite{Li} proved that if $(A,\m)$ is a rational singularity, then for any integrally closed $\m$-primary ideals $I$ and $I'$ 
we have that the product $II'$ is also integrally closed and that 
  $I^2=QI$ for any minimal reduction $Q$ of $I$.
Cutkosky \cite{CharRat} showed that the first property characterizes 
the two-dimensional rational singularities.
In \cite{OWYgood}, \cite{OWYrees}, \cite{OWYcore}, we introduced the notion of $p_g$-ideals, which satisfy the properties above, and proved many nice properties. For any normal surface singularity, $p_g$-ideals exist plentifully and form a semigroup with respect to the product.
It is easy to see that $A$ is a rational singularity if and only if every integrally closed $\m$-primary ideal is a 
$p_g$-ideal (see \remref{r:rat}).
So it is natural to ask how the semigroup of the $p_g$-ideals encodes the properties of the singularity.

Let $X\to \spec A$ be a resolution of singularity. 
Suppose that an integrally closed $\m$-primary ideal $I$ is represented by a cycle $Z$ on $X$ (see \S \ref{ss:pg}). Then $I=H^0(X, \cO_X(-Z))$. We define an invariant $q(I)$ to be $\ell_A(H^1(X, \cO_X(-Z)))$, where $\ell_A$ denotes the length of $A$-modules.
Then $I$ is called the {\em $p_g$-ideal} if $q(I)=p_g(A)$, where $p_g$ denotes the geometric genus (see \defref{d:q}). 
In general, we have $p_g(A)\ge q(\ol{{I^n}})\ge q(\ol{{I^{n+1}}})$ 
(see \proref{p:le-cycle}), where $\ol{I^n}$ denotes the integral closure of $I^n$, and we know that there exist ideals with $q=0$ and $q=p_g(A)$; however, the range of $q$ is still unknown.
 We are interested in obtaining the range of $q$ and also the minimal integer $n_0$ such that $q(\ol{I^n})=q(\ol{I^{n_0}})$ for $n\ge n_0$. This integer connects with the {\em normal reduction number} $\bar{r}(I)$ (see \S \ref{s:nrn}).
The results of Lipman and Cutkosky above implies that $\bar r(A)=1$ if and only if $A$ is a rational singularity (\thmref{t:r1}).
Then a very simple question arises: can we characterize normal surface singularities with  $\bar r(A)=2$? 

In this paper, we give partial answers to the questions above. We will prove the following (see \thmref{t:ellr}, \corref{c:P}, \thmref{t:mpg}).

\begin{introthm}\label{t:int}
{\rm (1)}
If $A$ is an elliptic singularity, then $\bar r(A)=2$, and for any $0\le q \le p_g(A)$ there exists an integrally closed $\m$-primary ideal $I$ with $q(I)=q$.

%\marginpar{p.9 at the bottom}

{\rm (2)} Assume that $A$ is not rational.
Then $A$ is Gorenstein and $\m$ is a $p_g$-ideal if and only if $A$ is a maximally elliptic singularity with $-Z_E^2=1$, where $Z_E$ is the fundamental cycle on a resolution.
\end{introthm}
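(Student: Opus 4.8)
The plan is to analyze the two implications of Theorem~\ref{t:int}(2) separately, translating the ideal-theoretic conditions into cycle-theoretic ones on a good resolution $X \to \spec A$. Throughout I would work with the fundamental cycle $Z_E$ and exploit the formula $q(I) = \ell_A(H^1(X,\cO_X(-Z)))$ for an ideal $I$ represented by a cycle $Z$, together with $p_g(A) = \dim_k H^1(X,\cO_X)$ and the characterization (from the $p_g$-ideal machinery of \cite{OWYgood}) that $\m$ is a $p_g$-ideal precisely when there is a cycle $Z$ representing $\m$ with $q(\cO_X(-Z)) = p_g(A)$, equivalently $H^1(X,\cO_X) \to H^1(X,\cO_X(-Z))$ is an isomorphism for the relevant $Z$ — this forces the cohomology of $\cO_X$ to be ``concentrated'' in a controlled way, which is exactly the kind of rigidity that elliptic singularities exhibit.

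For the ``if'' direction, I would assume $A$ is maximally elliptic with $-Z_E^2 = 1$ and show first that $\m$ is represented by $Z_E$ itself: since $-Z_E^2=1$, the linear system $|{-Z_E}|$ (or rather $H^0(\cO_X(-Z_E))$) should generate $\m$ after possibly one blow-up, and the antinef closure considerations plus the maximal ellipticity hypothesis pin down the cycle. Then I would compute $q(\cO_X(-Z_E))$ using the cohomology exact sequence $0 \to \cO_X(-Z_E) \to \cO_X \to \cO_{Z_E} \to 0$: maximal ellipticity gives $\chi(\cO_{Z_E}) = 0$ and $H^1(\cO_{Z_E}) \cong k$, and combined with $H^0(\cO_{Z_E}) = k$ one gets that $H^1(\cO_X) \to H^1(\cO_X(-Z_E))$ is an isomorphism, hence $q(\m) = p_g(A)$. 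The Gorenstein claim should follow from the standard fact that maximally elliptic singularities of degree one are Gorenstein (this is essentially Yau's result; one checks the canonical cycle equals $-Z_E$ using $-Z_E^2=1$ and adjunction), so I would cite that.

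For the ``only if'' direction, I would assume $A$ is Gorenstein, not rational, and $\m$ is a $p_g$-ideal, and derive that $A$ is maximally elliptic with $-Z_E^2=1$. Since $A$ is not rational, $p_g(A) \ge 1$. The fact that $\m$ is a $p_g$-ideal means by Theorem~\ref{t:ellr}(1) read in reverse — or more directly by the cohomological criterion — that some cycle $Z$ with $\cO_X(-Z)$ globally generated and $H^0(\cO_X(-Z)) = \m$ satisfies $H^1(\cO_X) \xrightarrow{\sim} H^1(\cO_X(-Z))$, forcing $H^1(\cO_Z) = 0$ along with $\chi(\cO_Z) = 1 - p_g(A) \le 0$. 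I would then argue that $Z \ge Z_E$ and that $H^1(\cO_{Z_E}) \ne 0$ unless... — actually the key point is that for the minimal such $Z$ one must have $Z = Z_E$, and then $\chi(\cO_{Z_E}) \le 0$ together with $A$ being Gorenstein (so the arithmetic genus of $Z_E$ is at most $1$, by the Gorenstein bound on $p_a$) forces $p_a(Z_E) = 1$, i.e. $A$ is elliptic, and in fact minimally/maximally elliptic. Finally $-Z_E^2 = 1$ comes from computing the multiplicity: for a $p_g$-ideal $I = \m$ one has $\mult(A) = \mult(\m) = -Z^2$ where $Z$ represents $\m$, and combining the Gorenstein-elliptic numerics (canonical cycle $= -Z_E$, multiplicity $= -Z_E^2$, and the maximal ellipticity) with $\emb(A) = \mult(A)+1$ for these singularities pins $-Z_E^2 = 1$; alternatively, $-Z_E^2 \ge 2$ would make $\m$ fail to be a $p_g$-ideal by a direct cohomology count on $Z_E$.

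The main obstacle I anticipate is the ``only if'' direction, specifically showing that the cycle $Z$ representing the $p_g$-ideal $\m$ can be taken equal to the fundamental cycle $Z_E$ (and ruling out that one genuinely needs a larger cycle, which would decouple the computation from $p_a(Z_E)$), and then extracting $-Z_E^2 = 1$ rather than merely $-Z_E^2 \le$ something; this requires carefully combining the Gorenstein duality (self-duality of $\cO_X(-Z_E)$-cohomology, $K_X = -Z_E$ numerically) with the maximal ellipticity structure theorem of Yau and the precise relationship between $q(\m)$, the multiplicity $-Z_E^2$, and $p_g(A)$. I would isolate this as the technical heart, likely using the exact sequence comparison between $\cO_X(-Z_E)$ and $\cO_X(-nZ_E)$ for the normal reduction number bound $\bar r(A) = 2$ already established in part (1).
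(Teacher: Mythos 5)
There are genuine gaps here, and the central computation in your ``if'' direction is incorrect. First, you do not address part (1) at all: the claim that $\bar r(A)=2$ for elliptic singularities and that $q$ attains every value in $\{0,\dots,p_g(A)\}$ is a substantive half of the theorem (the paper proves it via R\"ohr's vanishing theorem applied to $Z+C$, where $C$ is the sum of the elliptic sequence on the maximal connected reduced cycle $B$ with $ZB=0$ containing $\supp(E_{min})$, giving $h^1(\cO_X(-Z))=h(B)$ and hence $n_0(Z)\le 1$; the surjectivity of $q$ then comes from a chain of blow-ups along a general hypersurface section, each step changing $h^1$ by at most $1$). You cannot refer to it as ``already established.''

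For part (2), the key error is in the ``if'' direction: from $0\to\cO_X(-Z_E)\to\cO_X\to\cO_{Z_E}\to 0$ with $h^0(\cO_{Z_E})=h^1(\cO_{Z_E})=1$ one gets $h^1(\cO_X(-Z_E))=p_g(A)-h^1(\cO_{Z_E})=p_g(A)-1$, \emph{not} $p_g(A)$; in general $h^1(\cO_X(-M))=p_g(A)$ iff $p_a(M)=0$, whereas $p_a(Z_E)=1$ for an elliptic singularity. The point you are missing is that $\m$ is \emph{not} represented by $Z_E$ on the minimal resolution: since $-Z_E^2=1<2=\mult A$, the sheaf $\cO_{X_0}(-Z_E)$ has a base point, and one must blow it up once; the maximal ideal cycle becomes $M=\phi^*Z_E+E_0$ with $p_a(M)=0$, and the $p_g$-ideal property is then checked via the cohomological cycle criterion ($E_0\not\le C_Y$, so $\cO_{C_Y}(-M)\cong\cO_{C_Y}$). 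This same phenomenon invalidates your ``only if'' strategy: a cycle $Z$ representing a $p_g$-ideal $\m$ satisfies $p_a(Z)=0$, so it can never equal $Z_E$ when $A$ is elliptic, and for Gorenstein $A$ no $p_g$-cycle exists on the minimal resolution at all; the paper instead uses $\mult A=2$ (Sally), deduces $-N^2=1$ for the maximal ideal cycle $N$ on the minimal resolution precisely because $\cO_{X_0}(-N)$ must fail to be globally generated, identifies $N=Z_E$, and computes $K_{X_0}N=1$ from $p_a(\phi^*N+E_0)=0$ to get $\chi(Z_E)=0$. Finally, you give no argument for \emph{maximal} ellipticity; this requires the elliptic-sequence computation showing that Gorenstein together with $Z_E$ being the maximal ideal cycle forces $p_g(A)=m+1$, which is a separate lemma you would need to prove or cite precisely.
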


%\marginpar{p.4  lines 7 - 12 from top:}

Throughout this paper, we assume the following.

\begin{ass}\label{ass:NC}
For any integrally closed $\m$-primary ideal $I\subset A$ represented on a resolution $X\to \spec A$ with exceptional set $E$, and for a general element $h\in I$, if $H$ denotes the the strict transform of $\di_{\spec A}(h)$ on $X$, then $H$ is a reduced divisor which is a disjoint union of nonsingular curves and each component of $H$ intersects the exceptional set transversally, namely, the local equations of $H$ and $E$ generate the maximal ideal at the intersection point.
(This condition holds in case the singularity is defined over a field of characteristic zero.)
 \end{ass} 

This paper is organized as follows. In \sref{s:Pre}, we recall the definitions and several properties of elliptic singularities and $p_g$-ideals in normal surface singularities which are needed later. In \sref{s:nrn}, we introduce  the normal reduction number and study the invariant $q$, and then prove (1) of \thmref{t:int}. In the last section, we prove  (2) of \thmref{t:int} and give an example of non-Gorenstein elliptic singularity  with $-Z_E^2=1$ of which the maximal ideal is a $p_g$-ideal.

\subsection*{Acknowledgements}
 The author would like to thank Professor Kei-ichi Watanabe and Professor Ken-ichi Yoshida for inviting him to those research and  valuable discussions. He was told the notion of the normal reduction number by Professor Watanabe. 
The author would also like to thank Professor Masataka Tomari for letting us know the result of \thmref{t:mpg}.

The author is grateful to the referee for very careful reading of the paper, and for valuable suggestions and comments which are very helpful for improving the paper.

\section{Preliminaries}
\label{s:Pre}
Throughout this paper, let $(A,\m)$ denote a normal surface singularity, namely, an excellent 
two-dimensional normal local domain containing an algebraically closed field isomorphic to the residue field
and 
$f\:X \to \spec A$ a resolution of singularity
with exceptional set $E:=f^{-1}(\m)$. 
Let $E=\bigcup_{i=1}^rE_i$ be the decomposition into irreducible components of $E$.
A divisor on $X$ supported in $E$ is called a {\it cycle}.
A divisor $D$ on $X$ is said to be {\em nef} if $DE_i\ge 0$ for all $E_i\subset E$, 
where $DE_i$ denotes the intersection number.
A divisor $D$ is said to be {\em anti-nef} if $-D$ is nef.
Since the intersection matrix is negative definite, there exists an anti-nef cycle $Z\ne 0$ and it satisfies  $Z\ge E$.

For a cycle $B > 0$, 
we denote by $\chi(B)$ the Euler characteristic $\chi(\cO_B)$. 
We have $\chi(D)+\chi(F)-D  F=\chi(D+F)$.
By definition, $p_a(B)=1-\chi(B)$.
The {\em fundamental cycle} on  $\supp (B)$ is denoted by $Z_B$; by definition, $Z_B$ is the minimal cycle such that $\supp (Z_B)=\supp (B)$ and $Z_BE_i\le 0$ for all $E_i \le B$.

For any function $h \in H^0(\cO_X)\setminus\{0\}$, which has zero of order $a_i$ at
$E_i$, we put $(h)_E=\sum a_iE_i$.
Clearly the cycle $(h)_E$ is anti-nef.

\subsection{Elliptic singularities}

\begin{defn}[Wagreich {\cite[p. 428]{wag.ell}}]
A normal surface singularity $(A,\m)$ is called an {\em elliptic singularity} if one of the following equivalent conditions  holds:
\begin{enumerate}
 \item $\chi(D)\ge 0$ for all cycles $D>0$ and $\chi(F)=0$ for some
       cycle $F>0$;
 \item  $\chi(Z_E)=0$.
\end{enumerate}
\end{defn}

%\marginpar{p.3 2nd line from top:}

\begin{rem}
The proof of the implication (2) $\Rightarrow$ (1) is given by several authors: e.g., Laufer \cite[Corollary 4.2]{la.me},
Tomari  \cite[Theorem (6.4)]{tomari.ell}.
See also \cite[Remark (6.5)]{tomari.ell}.
\end{rem}

%\marginpar{p.3  lines 7-11 from top:}

\begin{defn}[Laufer {\cite[Definition 3.1 and 3.2]{la.me}}]
Suppose that $(A,\m)$ is an elliptic singularity.
Then there exists a unique cycle $E_{min}$ such that $\chi(E_{min})=0$ and
 $\chi(D)>0$ for all cycles $D$ such that $0<D<E_{min}$.
The cycle $E_{min}$ is called a {\em minimally elliptic cycle}.
The singularity $(A,\m)$ is said to be minimally elliptic if the fundamental
 cycle is  minimally elliptic on the minimal resolution.
\end{defn}

The next proposition follows from \cite[Proposition 3.2]{la.me}.

\begin{prop}\label{p:Eexists}
Assume that $A$ is an elliptic singularity. 
Let $D>0$ be a cycle with $\chi(D)=0$.
Then we have the following.
\begin{enumerate}
 \item $D\ge E_{min}$. Consequently, $D$ is connected (i.e., $\supp (D)$ is connected).
 \item Any connected reduced cycle $F$ 
not containing any component of $D$
       is the  exceptional set of a rational singularity and satisfies $D  F\le 1$.  
\end{enumerate} 
\end{prop}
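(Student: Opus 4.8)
The plan is to deduce both parts from two facts already available for an elliptic singularity --- $\chi(B)\ge 0$ for every cycle $B>0$, and the existence and \emph{uniqueness} of the minimally elliptic cycle $E_{min}$ --- using only the additivity $\chi(B+C)=\chi(B)+\chi(C)-B\cdot C$ as a computational tool.

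For (1), I would first locate $E_{min}$ inside $D$. The set of cycles $C$ with $0<C\le D$ and $\chi(C)=0$ is finite and nonempty (it contains $D$), so choose a minimal element $C_0$. If $C'$ is a cycle with $0<C'<C_0$, then $\chi(C')\ge 0$ by ellipticity and $\chi(C')\ne 0$ by minimality of $C_0$, hence $\chi(C')>0$; thus $C_0$ satisfies exactly the two conditions characterizing $E_{min}$, and uniqueness forces $C_0=E_{min}$, so $E_{min}\le D$. For connectedness, suppose $\supp D$ were disconnected and write $D=D_1+D_2$ with $D_1,D_2>0$ and no component of $D_1$ meeting a component of $D_2$, so $D_1\cdot D_2=0$. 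Then $0=\chi(D)=\chi(D_1)+\chi(D_2)$ with both terms $\ge 0$, forcing $\chi(D_1)=\chi(D_2)=0$; applying what was just shown gives $E_{min}\le D_1$ and $E_{min}\le D_2$, impossible since $E_{min}>0$ while $\supp D_1\cap\supp D_2=\emptyset$.

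For (2): $F$ can be contracted (its intersection matrix, a principal submatrix of the negative-definite matrix of $E$, is negative definite), and by Artin's criterion the resulting singularity is rational precisely when $\chi(C)\ge 1$ for every cycle $C>0$ supported on $F$. If some such $C$ had $\chi(C)\le 0$, then $\chi(C)=0$ by ellipticity, hence $C\ge E_{min}$ by (1); picking a component $E_\ell$ of $E_{min}$, it is a component of $D$ (as $E_{min}\le D$ by (1)) and lies in $F$ (as $\supp E_{min}\subseteq\supp C\subseteq F$), contradicting the hypothesis that $F$ contains no component of $D$. Hence $F$ is the exceptional set of a rational singularity. For the bound $D\cdot F\le 1$, order the components of $F$ as $E_{i_1},\dots,E_{i_k}$ so that every partial sum $F_j:=E_{i_1}+\cdots+E_{i_j}$ is connected (possible since $F$ is connected). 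For $2\le j\le k$ one has $F_{j-1}\cdot E_{i_j}\ge 1$ and $\chi(E_{i_j})=1-p_a(E_{i_j})\le 1$, so $\chi(F_j)=\chi(F_{j-1})+\chi(E_{i_j})-F_{j-1}\cdot E_{i_j}\le\chi(F_{j-1})$; with $\chi(F_1)=\chi(E_{i_1})\le 1$, induction gives $\chi(F)\le 1$. Finally, since $\chi(D)=0$ and $\chi(D+F)\ge 0$ (because $A$ is elliptic and $D+F>0$),
\[
D\cdot F=\chi(D)+\chi(F)-\chi(D+F)=\chi(F)-\chi(D+F)\le 1 .
\]

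I do not anticipate a serious obstacle: the substantive input --- the existence and uniqueness of $E_{min}$ --- is Laufer's \cite{la.me}, and everything above is bookkeeping with additivity of $\chi$ and its non-negativity on effective cycles. The idea worth isolating is that a minimal $\chi=0$ subcycle of $D$ must, by that uniqueness, coincide with $E_{min}$; this one observation yields $D\ge E_{min}$, the connectedness of $D$, and --- fed back through part (1) --- both halves of (2).
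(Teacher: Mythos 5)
Your proof is correct. There is nothing in the paper to compare it against line by line: the paper gives no argument for this proposition and simply cites Laufer's Proposition 3.2 in \cite{la.me}, so you have in effect reconstructed Laufer's proof from the two quoted inputs (nonnegativity of $\chi$ on effective cycles and the existence/uniqueness of $E_{min}$). Your key observation --- that a minimal subcycle $C_0$ of $D$ with $\chi(C_0)=0$ automatically satisfies $\chi(C')>0$ for all $0<C'<C_0$ and hence, by uniqueness, equals $E_{min}$ --- is exactly the right lever, and it correctly drives the connectedness of $D$ and, via Artin's criterion, the rationality of $F$. The remaining steps check out: the bound $\chi(F)\le 1$ for a connected reduced cycle by adding components one at a time with $F_{j-1}\cdot E_{i_j}\ge 1$ and $\chi(E_{i_j})\le 1$, and then $D\cdot F=\chi(F)-\chi(D+F)\le 1$ using $\chi(D)=0$ and $\chi(D+F)\ge 0$. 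The only point worth making explicit is that the minimal element $C_0$ exists because the set $\defset{C}{0<C\le D,\ \chi(C)=0}$ is finite and nonempty, and that any $C'<C_0$ automatically satisfies $C'\le D$, so minimality really does rule out $\chi(C')=0$; you use both facts implicitly and correctly.
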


The notion of elliptic sequence was introduced by S.~S.-T.~Yau \cite{yau.gor},
\cite{yau.hyper} for elliptic singularities. 

\begin{defn}
Assume that $(A,\m)$ is an elliptic singularity.
Let $B$ be a connected reduced cycle such that $\supp (E_{min})\subset B$.
We define the {\em elliptic sequence} on $B$ as follows:
Let $B_0=B$. If $Z_{B_0}   E_{min}<0$, then the elliptic
 sequence is $\{Z_{B_0}\}$.
If $Z_{B_i}   E_{min}=0$, then define $B_{i+1}\le B_i$ to be the
 maximal reduced connected cycle containing $\supp(E_{min})$ such that
 $Z_{B_{i}}  B_{i+1}=0$.
If we have $Z_{B_m}   E_{min}<0$, then the elliptic
 sequence is $\{Z_{B_0}, \dots ,Z_{B_m}\}$.
\end{defn}

\begin{prop}[{Tomari
\cite[Theorem (6.4)]{tomari.ell}}]\label{p:tomari-nef}
Let $\{Z_{B_0}, \dots ,\zb m\}$ be the elliptic sequence on $B$.
For an integer $0 \le t \le m$, we define a cycle $C_t$ by
$$
C_t=\sum _{i=0}^tZ_{B_i}.
$$
Then the set $\defset{C_k}{0 \le k \le m}$  coincides with the set of cycles $C>0$ supported on $B$ such that $C$ is anti-nef on $B$ and $\chi(C)=0$.
\end{prop}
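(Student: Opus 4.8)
The plan is to prove the two inclusions separately. First I would show that each $C_t$ has the required properties: that $C_t > 0$, that $C_t$ is anti-nef on $B$, and that $\chi(C_t) = 0$. The anti-nef property should follow from the defining recursion of the elliptic sequence: by construction $Z_{B_{i}} B_{i+1} = 0$, so for a component $E_j \le B_{i+1}$ one has $Z_{B_i} E_j \le 0$ (since $Z_{B_i}$ is anti-nef on $B_i \supseteq B_{i+1}$ and the ``boundary'' contribution vanishes), while $Z_{B_{i+1}}$ itself contributes nonpositively; summing over $i$ from $0$ to $t$ and checking components $E_j$ in $B_t$ versus those only in $B_0 \setminus B_t$ gives $C_t E_j \le 0$ for all $E_j \le B$. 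For $\chi(C_t) = 0$ I would induct on $t$ using the additivity formula $\chi(D) + \chi(F) - DF = \chi(D+F)$ with $D = C_{t-1}$ and $F = Z_{B_t}$: since $\chi(Z_{B_t}) = 0$ by \proref{p:Eexists}(1) applied to the elliptic singularity (each $Z_{B_i}$ is a cycle with $\chi = 0$, which is why the sequence is defined the way it is — actually one must argue $\chi(Z_{B_i}) = 0$ holds precisely because $Z_{B_{i-1}} E_{min} = 0$ forces it, via Laufer's theory), it remains to show the intersection number $C_{t-1} Z_{B_t}$ vanishes. This should come from $Z_{B_{t-1}} Z_{B_t} = 0$ (the defining relation, extended from $B_t$ to all of $B$ since $Z_{B_t}$ is supported on $B_t$) together with $Z_{B_i} Z_{B_t} = 0$ for $i < t-1$, which follows because $\supp(Z_{B_t}) \subseteq B_t \subseteq B_{i+1}$ and $Z_{B_i} B_{i+1} = 0$.

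For the reverse inclusion — that any cycle $C > 0$ supported on $B$ which is anti-nef on $B$ with $\chi(C) = 0$ equals some $C_k$ — I would argue by induction on $m$, the length of the elliptic sequence. By \proref{p:Eexists}(1), $\chi(C) = 0$ forces $C \ge E_{min}$, so $\supp(C)$ is a connected cycle containing $\supp(E_{min})$. Since $C$ is anti-nef on $B$ and $Z_B$ is the minimal such cycle with full support $B$, comparing supports and using negative definiteness one sees $C \ge Z_{B_0}$. Now consider $C' := C - Z_{B_0}$. Using the additivity of $\chi$ and the fact that $\chi(C) = \chi(Z_{B_0}) = 0$, one gets $\chi(C') = Z_{B_0} C' = Z_{B_0}(C - Z_{B_0})$; anti-nef-ness of $C$ and of $Z_{B_0}$ should force $Z_{B_0} C' \le 0$, and combined with $\chi(C') \ge 0$ (elliptic singularity, \proref{p:Eexists} or the definition) we conclude $\chi(C') = 0$ and $Z_{B_0} C' = 0$. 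If $C' = 0$ then $C = Z_{B_0} = C_0$; otherwise $C'$ is a nonzero cycle with $\chi(C') = 0$, hence $\supp(C') \supseteq \supp(E_{min})$, and $Z_{B_0} C' = 0$ together with maximality forces $\supp(C') \subseteq B_1$. One then checks $C'$ is anti-nef on $B_1$ (using $C$ anti-nef on $B$ and $Z_{B_0} C' = 0$ to handle the components where $C'$ meets the complement), and applies the induction hypothesis to the elliptic sequence $\{Z_{B_1}, \dots, Z_{B_m}\}$ on $B_1$ to get $C' = \sum_{i=1}^k Z_{B_i}$, whence $C = \sum_{i=0}^k Z_{B_i} = C_k$.

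The main obstacle I anticipate is the bookkeeping in the anti-nef computations: when one writes $C_t E_j$ or $Z_{B_0}(C - Z_{B_0})$ and splits into components lying in the smaller cycle $B_{t+1}$ (resp.\ $B_1$) versus those lying only in $B_t \setminus B_{t+1}$, one must carefully use that $Z_{B_i} B_{i+1} = 0$ means the total intersection of $Z_{B_i}$ with the connected cycle $B_{i+1}$ is zero, not that each component intersection vanishes — so positivity must be extracted component-by-component from anti-nef-ness and then a vanishing-sum argument applied. A secondary subtlety is justifying $\chi(Z_{B_i}) = 0$ for every term of the sequence; this is really where Laufer's \proref{p:Eexists} and the structure theory of elliptic singularities enter, since a priori the recursion only guarantees $Z_{B_{i-1}} E_{min} = 0$, and one needs that this propagates to $\chi(Z_{B_i}) = 0$. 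I would isolate this as a preliminary lemma (or cite it from \cite{la.me}, \cite{tomari.ell}) before assembling the two inclusions above.
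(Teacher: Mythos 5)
The paper offers no proof of this proposition --- it is quoted directly from Tomari \cite[Theorem (6.4)]{tomari.ell} --- so there is no internal argument to compare yours against; I can only judge your sketch on its merits. Your reverse inclusion is essentially sound: anti-nefness on the connected $B$ forces $\supp(C)=B$, hence $C\ge Z_{B_0}$; the identity $\chi(C')=Z_{B_0}C'$ together with $Z_{B_0}C'\le 0$ (componentwise, since $C'\ge 0$ and $Z_{B_0}$ is anti-nef on $B$) and $\chi(C')\ge 0$ pins both quantities to zero; and the componentwise vanishing $Z_{B_0}E_j=0$ on $\supp(C')$ pushes $C'$ into $B_1$ and sets up the induction. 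The computation of $\chi(C_t)=0$ in the forward direction is also fine, granted the preliminary fact $\chi(Z_{B_i})=0$ that you rightly isolate.

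The genuine gap is in the anti-nefness of $C_t$. Your case split ($E_j$ inside $B_t$ versus $E_j$ in $B_0\setminus B_t$) does not address the components $E_j\le B_s\setminus B_{s+1}$ with $s<t$ that meet $B_{s+1}$: for such $E_j$ the terms $Z_{B_i}E_j$ with $i>s$ are nonnegative and can each be as large as $1$ (by \proref{p:Eexists} (2)), the terms with $i<s$ vanish by your vanishing-sum argument, and the only negative contribution is the single term $Z_{B_s}E_j$ --- so $C_tE_j\le 0$ is not automatic. Two further inputs are needed. First, the \emph{maximality} of $B_{s+1}$ in the definition of the elliptic sequence forces $Z_{B_s}E_j\le -1$ for any such $E_j$, since otherwise $B_{s+1}+E_j$ would be a larger reduced connected cycle annihilated by $Z_{B_s}$. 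Second, one must bound the total positive contribution by $\sum_{i=s+1}^{t}Z_{B_i}E_j\le 1$, which does not follow by applying \proref{p:Eexists} (2) to each $Z_{B_i}$ separately, but does follow by applying it once to the connected tail sum $\sum_{i=s+1}^{t}Z_{B_i}$ --- and that requires already knowing that this tail sum has $\chi=0$ and hence is connected. Consequently the induction cannot establish anti-nefness and $\chi=0$ as two independent claims, as your outline suggests; it must be organized so that $\chi=0$ (and connectedness) for the shorter tail sums is available before proving anti-nefness of the longer sums.
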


\begin{lem}[R{\"o}hr {\cite[1.7]{rohr}}, cf. {\cite[Lemma 3.2]{o.numGell}}]\label{l:rohr}
Assume that $A$ is an elliptic singularity. 
Let $D$ be a nef divisor on $X$ such that $D  E_{min}>0$.
Then $H^1(\cO_X(D))=0$.
\end{lem}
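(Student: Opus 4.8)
The plan is to reduce the statement, via the theorem on formal functions, to a cohomology vanishing statement on cycles, and then to prove the latter by induction, peeling off one component of $E$ at a time. Concretely: since $f$ is an isomorphism over $\spec A\setminus\{\m\}$, the $A$-module $R^1f_*\cO_X(D)$ is supported at $\m$ and hence of finite length, and $H^1(X,\cO_X(D))=R^1f_*\cO_X(D)$. By the theorem on formal functions this equals $\varprojlim_n H^1\bigl(nZ_E,\cO_X(D)\otimes\cO_{nZ_E}\bigr)$, so it suffices to prove that $H^1(C,\cO_X(D)\otimes\cO_C)=0$ for every cycle $C>0$ with $\supp C\subseteq E$.

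Writing $\cO_C(D):=\cO_X(D)\otimes\cO_C$, I would argue by induction on the sum of the coefficients of $C$. Given $C$, pick a component $E_j\le C$, set $C'=C-E_j\ge 0$, and use the exact sequence
\[
0\to\cO_{E_j}\!\bigl((D-C')|_{E_j}\bigr)\to\cO_C(D)\to\cO_{C'}(D)\to 0 .
\]
Since $H^1(\cO_{C'}(D))=0$ by induction (vacuously when $C'=0$), it is enough to choose $E_j$ so that the line bundle $(D-C')|_{E_j}$ on the integral curve $E_j$ is nonspecial, for which it suffices that $\deg_{E_j}(D-C')>2p_a(E_j)-2$. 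As $\deg_{E_j}(D-C')=(D-C+E_j)\cdot E_j$ and $2p_a(E_j)-2=E_j^2+K_X\cdot E_j$, this inequality is equivalent to $(D-C-K_X)\cdot E_j>0$, i.e.\ $(C+K_X-D)\cdot E_j<0$.

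Thus everything reduces to the numerical claim: for every cycle $C>0$ supported on $E$ there is a component $E_j\le C$ with $(C+K_X-D)\cdot E_j<0$. Suppose not; then $(C+K_X-D)\cdot E_j\ge 0$ for every $E_j\le C$, and multiplying by the coefficient of $E_j$ in $C$ and summing gives $(C+K_X-D)\cdot C\ge 0$. Since $\chi(C)=-\tfrac12(C^2+K_X\cdot C)$, this reads $D\cdot C\le -2\chi(C)$. But $A$ is elliptic, so $\chi(C)\ge 0$, and $D$ is nef, so $D\cdot C\ge 0$; hence $D\cdot C=0$ and $\chi(C)=0$. From $\chi(C)=0$ and \proref{p:Eexists}(1) we get $C\ge E_{min}$, so every component of $E_{min}$ occurs in $C$; since $D\cdot C=0$ with $D$ nef forces $D\cdot E_i=0$ for all $E_i\le C$, we get $D\cdot E_{min}=0$, contradicting the hypothesis. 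This proves the claim, which also covers the base case (the instance with $C$ a single reduced component), and hence the lemma.

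The step I would be most careful about is the formal-functions reduction — checking that $R^1f_*\cO_X(D)$ is finite length for a possibly non-exceptional nef divisor $D$ and that the thickenings $nZ_E$ are cofinal among the infinitesimal neighbourhoods of $E$ — together with the adjunction bookkeeping that rewrites the cohomological condition as $(C+K_X-D)\cdot E_j<0$. The combinatorial heart, namely that a peelable component always exists, is then short: its only inputs are $\chi\ge 0$ for elliptic singularities, \proref{p:Eexists}(1), and the hypothesis $D\cdot E_{min}>0$ (the Riemann--Roch identity for cycles and adjunction being standard).
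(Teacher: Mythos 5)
The paper does not prove this lemma---it is quoted from R\"ohr \cite[1.7]{rohr}---so there is no in-paper argument to compare against; your proof is correct and is essentially the standard one underlying the cited result: reduce by the theorem on formal functions to vanishing on cycles, then peel off components via the decomposition sequence, with the numerical claim ($(C+K_X-D)\cdot E_j<0$ for some $E_j\le C$) following exactly as you say from $\chi\ge 0$, nefness of $D$, \proref{p:Eexists}(1), and $DE_{min}>0$. The two points you flag (cofinality of the thickenings $nZ_E$ among the infinitesimal neighbourhoods cut out by $\m^n\cO_X$, and the adjunction/Serre-duality bookkeeping on the possibly singular integral curves $E_j$) are both standard and hold as stated.
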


\subsection{$p_g$-ideals}\label{ss:pg}
Let $I\subset A$ be an integrally closed $\m$-primary ideal.
Then there exists a resolution $X \to \Spec A$ and a cycle $Z>0$ on $X$ 
such that $I \cO_X = \cO_X(-Z)$.
In this case, we denote the ideal $I$ by $I_Z$,
and we say that $I$ is  {\em represented on $X$ by $Z$}.
Note that  $I_Z=H^0(X,\cO_X(-Z))$.

When we write $I_Z$, we always assume that $\cO_X(-Z)$ is generated by global sections, namely, $I \cO_X = \cO_X(-Z)$.

We denote by $h^1(\cO_X(-Z))$ the length $\ell_A(H^1(X,\cO_X(-Z)))$. 

\begin{defn}\label{d:q}
The {\em geometric genus} $p_g(A)$ of $A$ is defined by 
$p_g(A) = h^1(\cO_X)$.
We define an invariant $q(I)$ by $q(I)=h^1(\cO_X(-Z))$; this does not depend on the choice of representations of the ideal (see \cite[Lemma 3.4]{OWYgood}).
\end{defn}

%\marginpar{p. 4 lines 14-16 below Definition 2.8:}

Kato's Riemann-Roch formula \cite{kato} shows a relation between the colength $\ell_A(A/I)$ and the invariant  $q(I)$ of $I=I_Z$:
\[
\ell_A(A/I) + q(I)=-\dfrac{Z^2+K_XZ}{2}+p_g(A).
\]
In particular, $\ell_A(A/I)$ can be computed from the resolution graph if $I$ is a $p_g$-ideal (see \defref{p_g-dfn}). 
However, the computation of the invariant  $q(I)$ (or $\ell_A(A/I)$) is very difficult for non rational singularities, and it seems to be given only for very special cases (e.g., \cite[\S 7]{OWYgood}).

\smallskip

We say that $\cO_X(-Z)$ {\em has no fixed component}
if $H^0(\cO_X(-Z))\ne H^0(\cO_X(-Z-E_i))$ for every $E_i\subset E$;
this is equivalent to the existence of an element $h\in H^0(\cO_X(-Z))$ such that $(h)_E=Z$.
It is clear that $\cO_X(-Z)$ has no fixed component when $I$ is represented by $Z$.

\begin{prop}[{\cite[2.5, 3.1]{OWYgood}}]\label{p:le-cycle}
Let $Z'$ and $Z$ be  cycles on $X$ and assume that $\cO_X(-Z)$ has no fixed components.
Then we have 
\[
h^1(\cO_X(-Z'-Z)) \le  h^1(\cO_X(-Z')).
\]  
 In particular, $h^1(\cO_X(-Z))\le p_g(A)$; if the equality holds, then $\cO_X(-Z)$ is generated by global sections.
\end{prop}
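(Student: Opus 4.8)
The plan is to make the hypothesis ``$\cO_X(-Z)$ has no fixed component'' work through a single section, reducing the statement to the fact that $X$ is proper over the \emph{affine} scheme $\spec A$. First I would choose $h\in H^0(\cO_X(-Z))$ with $(h)_E=Z$ (such an $h$ exists precisely because there is no fixed component) and write $\di_X(h)=Z+H$, so that $H$, the strict transform of $\di_{\spec A}(h)$, is an effective divisor containing no component of $E$. For any cycle $Z'$, multiplication by $h$ is injective on $\cO_X(-Z')$ and identifies it with $\cO_X(-Z'-\di_X(h))=\cO_X(-Z'-Z-H)$, a subsheaf of $\cO_X(-Z'-Z)$; this produces a short exact sequence
\[
0\longrightarrow \cO_X(-Z')\xrightarrow{\ \cdot h\ }\cO_X(-Z'-Z)\longrightarrow \cG\longrightarrow 0,\qquad \cG:=\cO_X(-Z'-Z)\otimes\cO_H,
\]
in which $\cG$ is supported on $H$.

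The key step is the vanishing $H^1(X,\cG)=0$. The morphism $H\to \di_{\spec A}(h)=\spec(A/hA)$ is proper with finite fibres, hence finite, so $H$ is affine and $f|_H$ has no higher direct images; therefore $H^1(X,\cG)=H^1(H,\cG)=0$. Now the long exact cohomology sequence of the displayed sequence gives a surjection $H^1(\cO_X(-Z'))\twoheadrightarrow H^1(\cO_X(-Z'-Z))$, which is the asserted inequality $h^1(\cO_X(-Z'-Z))\le h^1(\cO_X(-Z'))$. Specializing to $Z'=0$ and using $h^1(\cO_X)=p_g(A)$ gives $h^1(\cO_X(-Z))\le p_g(A)$.

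For the equality statement, suppose $h^1(\cO_X(-Z))=p_g(A)$. With $Z'=0$ the surjection above becomes a surjection between vector spaces of the same finite dimension $p_g(A)$, hence an isomorphism; since $H^1(\cO_X)\to H^1(\cO_X(-Z))$ is then injective, the connecting map $H^0(X,\cG)\to H^1(\cO_X)$ in the long exact sequence of $0\to\cO_X\xrightarrow{\cdot h}\cO_X(-Z)\to\cG\to0$ vanishes, so $H^0(\cO_X(-Z))\to H^0(X,\cG)=H^0(H,\cO_X(-Z)|_H)$ is surjective. The base locus of $\cO_X(-Z)$ is contained in $H$, since by $(h)_E=Z$ the section $h$ generates $\cO_X(-Z)$ outside $H$ (the $Z'=0$ sequence identifies the cokernel of $h$ with $\cG$). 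For $x\in H$ the line bundle $\cO_X(-Z)|_H$ on the affine curve $H$ has a global section not vanishing at $x$, and lifting it through the surjection just obtained gives a global section of $\cO_X(-Z)$ not vanishing at $x$. Hence $\cO_X(-Z)$ has empty base locus, i.e.\ it is generated by its global sections.

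The heart of the matter, and the step I expect to be the main obstacle, is the vanishing $H^1(X,\cG)=0$: without it the short exact sequence gives nothing. It is not a statement about complete curves — it holds because $X$ is proper over the \emph{affine} base $\spec A$, so that $H$ is affine and its coherent sheaves have vanishing higher cohomology; the role of ``no fixed component'' is exactly to supply the section $h$ with $(h)_E=Z$ that produces this $H$. Granting the vanishing, the inequality falls out of one long exact sequence, and the equality case is a short diagram chase plus the elementary fact that a line bundle on an affine curve is generated by its global sections.
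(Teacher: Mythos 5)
Your proof is correct, and it is essentially the argument behind the cited reference \cite[2.5, 3.1]{OWYgood} (the paper itself only cites and does not reprove this): multiplication by a section $h$ with $(h)_E=Z$, the cokernel supported on the affine curve $H$ so that its $H^1$ vanishes, and the resulting surjection on $H^1$ together with the length count in the equality case. The only cosmetic point is that $H^1(\cO_X)$ and $H^1(\cO_X(-Z))$ are finite-length $A$-modules rather than vector spaces, but a surjection between modules of equal finite length is still an isomorphism, so the argument is unaffected.
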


\begin{defn}\label{p_g-dfn}
(1) We call $I$ a {\em $p_g$-ideal} if $q(I)=p_g(A)$.  

(2) A cycle $Z>0$ is called a {\em $p_g$-cycle} if $\cO_X(-Z)$ is generated by global sections and $h^1(\cO_X(-Z))=p_g(A)$.
\end{defn}

\begin{rem}\label{r:rat}
If $A$ is {\em rational}, namely $p_g(A)=0$, every integrally closed $\m$-primary ideal is a 
$p_g$-ideal by \cite[12.1]{Li}. Conversely, this property characterizes  a rational singularity
because we always have integrally closed $\m$-primary ideal $I$ with $q(I)=0$ (see e.g. \cite[4.5]{OWYgood}).
\end{rem}

In \cite{OWYgood} and \cite{OWYrees}, we obtained many good properties and characterizations of $p_g$-ideals.
Let us review some of these results.

Recall that an ideal $J \subset I$ is called a {\it reduction} of $I$ if $I$ is integral over $J$ or, 
equivalently,  $I^{r+1} = I^rJ$ for some integer $r\ge 1$ (see e.g. \cite{HS-book}).
An ideal  $Q\subset I$ is called a 
{\it minimal reduction} of $I$ if $Q$ is minimal among the reductions of $I$. In our case, any minimal reductions of an $\m$-primary ideal is a parameter ideal (cf. \cite[8.3]{HS-book}).

\begin{prop}[\textrm{see \cite[3.6]{OWYgood}}]\label{p:sg}
Let  $I$ and $I'$  be any integrally closed $\m$-primary ideals of $A$.
Then we have the following.
\begin{enumerate}
\item $I$ and $I'$  are $p_g$-ideals if and only if so is $II'$. 
In particular, the set of $p_g$-ideals forms a semi group with respect to the product.
\item If $I$ is a $p_g$-ideal and $Q$ a minimal reduction of $I$, then $I^2=QI$.
\end{enumerate}
\end{prop}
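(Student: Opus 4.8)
The plan is to translate the statement entirely into cohomology of invertible sheaves on a common resolution. Fix $f\colon X\to\spec A$ on which both $I=I_Z$ and $I'=I_{Z'}$ are represented; then $(II')\cO_X=\cO_X(-Z)\otimes\cO_X(-Z')=\cO_X(-Z-Z')$, so $\overline{II'}=H^0(X,\cO_X(-Z-Z'))=I_{Z+Z'}$ and $q(\overline{II'})=h^1(\cO_X(-Z-Z'))$. With this, the ``only if'' half of (1) is immediate: if $II'$ is a $p_g$-ideal then $h^1(\cO_X(-Z-Z'))=p_g(A)$, and two applications of \proref{p:le-cycle} (valid since $\cO_X(-Z)$ and $\cO_X(-Z')$ have no fixed components) give $p_g(A)=h^1(\cO_X(-Z-Z'))\le h^1(\cO_X(-Z))\le p_g(A)$, whence $q(I)=q(I')=p_g(A)$.

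The key input I would establish is a cohomological reformulation of the $p_g$-property: if $I=I_Z$ is a $p_g$-ideal and $h\in I$ is general with $(h)_E=Z$, then multiplication by $h$ induces an isomorphism $H^1(\cO_X(-D))\xrightarrow{\ \sim\ }H^1(\cO_X(-Z-D))$ for every cycle $D\ge0$ with $\cO_X(-D)$ free of fixed components. The mechanism is \assref{ass:NC}: the strict transform $H$ of $\di_{\spec A}(h)$ is a disjoint union of smooth curves, finite over the affine curve $\di_{\spec A}(h)$ and hence affine, so $H^1(\cO_H\otimes\cL)=0$ for every invertible $\cL$. Applying this to $0\to\cO_X(-D)\xrightarrow{h}\cO_X(-Z-D)\to\cO_H(-Z-D)\to0$ shows the map on $H^1$ is surjective, so it is an isomorphism iff the restriction $H^0(\cO_X(-Z-D))\to H^0(\cO_H(-Z-D))$ is onto, iff $h^1(\cO_X(-Z-D))=h^1(\cO_X(-D))$. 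For $D=0$ both lengths equal $p_g(A)$ (left by hypothesis, right by definition), so this case is automatic; extending it to all $D$ — equivalently, knowing $h^1(\cO_X(-nZ-D))$ is stationary in $n$ — is the step I expect to cost real work, the natural route being the Stein factorization of the semiample morphism attached to $\cO_X(-Z)$, whose target has geometric genus $p_g(A)$ precisely when $I$ is a $p_g$-ideal.

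Granting the reformulation, the semigroup assertion is formal: applying it to $I_Z$ with $D=Z'$ and then to $I_{Z'}$ with $D=0$ yields $h^1(\cO_X(-Z-Z'))=h^1(\cO_X(-Z'))=p_g(A)$, so $q(\overline{II'})=p_g(A)$. It remains to see $II'=\overline{II'}=I_{Z+Z'}$. For this I would take a general pencil $h_1,h_2\in I$ generating $\cO_X(-Z)$, with disjoint smooth strict transforms $H_1,H_2$, giving $0\to\cO_X(Z)\to\cO_X^{\oplus2}\xrightarrow{(h_1,h_2)}\cO_X(-Z)\to0$; twisting by $\cO_X(-Z')$ and passing to cohomology, surjectivity of $H^0(\cO_X(-Z'))^{\oplus2}\to H^0(\cO_X(-Z-Z'))$ — which would force $II'=h_1I'+h_2I'=H^0(\cO_X(-Z-Z'))$ — is equivalent to injectivity of $H^1(\cO_X(Z-Z'))\xrightarrow{(h_1,h_2)}H^1(\cO_X(-Z'))^{\oplus2}$. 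After identifying $\cO_X(Z-Z')\cong\cO_X(-Z'-H_i)$ via $h_i$ and using $H^1(\cO_{H_i}(-Z'))=0$, the kernel of $\cdot h_i$ is the cokernel of $H^0(\cO_X(-Z'))\to H^0(\cO_{H_i}(-Z'))$, and showing these two cokernels intersect only in $0$ is exactly where the hypothesis that $I'$ — not just $I$ — is a $p_g$-ideal must be used; I regard this as the main obstacle, and would again reduce it to restriction-surjectivity statements of the type above.

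Part (2), by contrast, needs only the $D=0$ case. With the general pencil $h_1,h_2\in I$ as above, $Q=(h_1,h_2)$ is a minimal reduction of $I$ (general elements of an $\m$-primary ideal generate one, and $Q\cO_X=\cO_X(-Z)=I\cO_X$). Twisting the Koszul sequence by $\cO_X(-Z)$ gives $0\to\cO_X\to\cO_X(-Z)^{\oplus2}\xrightarrow{(h_1,h_2)}\cO_X(-2Z)\to0$, and its cohomology sequence identifies the image of $I\oplus I\to H^0(\cO_X(-2Z))$ with $QI$; this image is all of $H^0(\cO_X(-2Z))$ because the connecting map $H^0(\cO_X(-2Z))\to H^1(\cO_X)$ vanishes, as each $\cdot h_i\colon H^1(\cO_X)\to H^1(\cO_X(-Z))$ is an isomorphism (surjective since $H^1(\cO_{H_i}(-Z))=0$, between $A$-modules of length $p_g(A)$), so $H^1(\cO_X)\to H^1(\cO_X(-Z))^{\oplus2}$ is injective. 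Since $H^0(\cO_X(-2Z))=I_{2Z}=\overline{I^2}\supseteq I^2\supseteq QI$, all three agree, giving $I^2=QI$ (and, with $I'=I$ in part (1), re-proving that $I^2$ is integrally closed).
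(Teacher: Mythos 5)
The paper offers no proof of this statement --- it is quoted from \cite[3.6]{OWYgood} --- so your proposal must stand on its own, and it does not: you yourself flag the two decisive steps of the ``if'' half of (1) as open (``the step I expect to cost real work,'' ``the main obstacle''), and those two steps \emph{are} the content of the cited result. Concretely, you never prove (a) that $h^1(\cO_X(-Z-Z'))=p_g(A)$ when $Z$ and $Z'$ are both $p_g$-cycles, nor (b) that $I_ZI_{Z'}=I_{Z+Z'}$. The inequality of \proref{p:le-cycle} only bounds $h^1(\cO_X(-Z-Z'))$ from above, so (a) needs a lower bound, and your Stein-factorization idea is not carried out. The missing tool is \proref{p:CX}: $Z$ is a $p_g$-cycle iff $\cO_{C_X}(-Z)\cong\cO_{C_X}$. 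From it, $\cO_{C_X}(-Z-Z')\cong\cO_{C_X}$, and since $H^1(\cO_X(-Z-Z'))\to H^1(\cO_{C_X}(-Z-Z'))$ is onto ($H^2$ vanishes on $X$), one gets $h^1(\cO_X(-Z-Z'))\ge h^1(\cO_{C_X})=p_g(A)$; this settles (a) and also the stationarity of $h^1(\cO_X(-nZ-D))$ that your ``key input'' requires. For (b), your route via the intersection of the two kernels is harder than necessary: once (a) is known, the surjection $H^1(\cO_X(-Z'))\to H^1(\cO_X(-Z-Z'))$ induced by a general $h\in I_Z$ is an isomorphism of length-$p_g(A)$ modules, hence $H^0(\cO_X(-Z-Z'))\to H^0(\cO_H(-Z-Z'))$ is onto, and one concludes by showing that the image of $I_ZI_{Z'}$ already fills $H^0(\cO_H(-Z-Z'))$ --- here \assref{ass:NC} enters ($H_j$ smooth, affine, transversal to one component at one point, so $\cO_{H_j}(-Z)=\frak m_{p_j}^{a_j}$ and $\frak m^{a}\frak m^{b}=\frak m^{a+b}$), together with surjectivity of $I_Z\to H^0(\cO_{H_j}(-Z))$, which follows from global generation of $\cO_X(-Z)$ and affineness of $H_j$. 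Without some such argument the ``if'' direction, including the integral closedness of $II'$, remains unproved.

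By contrast, your part (2) is correct and complete as written, and the ``only if'' half of (1) is a correct two-line application of \proref{p:le-cycle}. One small caveat on (2): the statement concerns an \emph{arbitrary} minimal reduction $Q=(f_1,f_2)$, whose generators need not satisfy \assref{ass:NC}, so the ``$H_i$ is affine'' surjectivity argument may not apply to them; it is cleaner to use the Koszul sequence for the given $f_1,f_2$ (exact because $Q\cO_X=I\cO_X=\cO_X(-Z)$) and the length count $\ell(\Coker)=h^1(\cO_X)-2h^1(\cO_X(-Z))+h^1(\cO_X(-2Z))=h^1(\cO_X(-2Z))-p_g(A)\le 0$ by \proref{p:le-cycle}, which gives $QI=I_{2Z}=\ol{I^2}\supseteq I^2\supseteq QI$ for every minimal reduction at once.
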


Next we recall a characterization of $p_g$-ideals by cohomological cycle.  
Let $K_X$ denote the canonical divisor on $X$.
Let $Z_{K_X}$ denote the {\em canonical cycle}, 
i.e., the $\Q$-divisor supported in $E$ 
such that $(K_X+Z_{K_X}) E_i=0$ for every $E_i\subset E$.
By \cite[\S 4.8]{chap}, if $p_g(A)>0$, 
there exists the smallest cycle $C_X>0$ on $X$ such that $h^1(\cO_{C_X}) =p_g(A)$; if $A$ is Gorenstein and the resolution $f \colon X \to \Spec A$ is minimal, then $C_X=Z_{K_X}$.
The cycle $C_X$ is called the {\em cohomological cycle} on $X$.
We put $C_X=0$ if $A$ is a rational singularity.

\begin{prop}[cf. {\cite[Proposition 2.6]{OWYcore}}]\label{p:CC}
Let $C\ge 0$ be the minimal cycle such that 
$H^0(X\setminus E, \cO_X(K_X))=H^0(X,\cO_X(K_X+C))$.
Then $C$ is the cohomological cycle.
Therefore 
if $g\:X'\to X$ is the blowing-up at a point in $\supp (C_X)$ and $E_0$ the exceptional set of $g$, then $C_{X'}=g^*C_X-E_0$.
For any cycle $D>0$ without common components with $C_X$, we have $h^1(\cO_D)=0$. 
\end{prop}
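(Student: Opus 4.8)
The plan is to identify the cohomological cycle $C_X$ via the duality between $H^1(\cO_X(-D))$ and sections of $\cO_X(K_X+D)$, in the spirit of the construction of $C_X$ from \cite[\S 4.8]{chap}. First I would recall that for any cycle $D>0$ the exact sequence $0\to\cO_X(-D)\to\cO_X\to\cO_D\to 0$ gives $H^1(\cO_D)\cong H^1(\cO_X)/\Image H^1(\cO_X(-D))$ once $D$ is large enough that $H^1(\cO_X(-D))\to H^1(\cO_X)$ is... actually the cleaner route is Grothendieck--Serre duality on the (possibly non-proper) resolution: $H^1(\cO_D)^\vee\cong H^0(\cO_X(K_X)\otimes\cO_D)$ as computed from $0\to\cO_X(K_X)\to\cO_X(K_X+D)\to\cO_X(K_X+D)\otimes\cO_D\to 0$, but I prefer to phrase everything through $H^0(X\setminus E,\cO_X(K_X))$, which is independent of $D$. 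Concretely, $H^1(\cO_D)$ is dual to the cokernel of $H^0(X,\cO_X(K_X+D))\hookrightarrow H^0(X\setminus E,\cO_X(K_X))$ (this is where the hypothesis $p_g(A)=h^1(\cO_X)<\infty$ enters, making $H^0(X\setminus E,\cO_X(K_X))/H^0(X,\cO_X(K_X))$ finite-dimensional of dimension $p_g(A)$). Hence $h^1(\cO_D)=p_g(A)$ if and only if $H^0(X,\cO_X(K_X+D))=H^0(X,\cO_X(K_X))$, i.e.\ iff $D$ contributes no new sections, which translates to $H^0(X\setminus E,\cO_X(K_X))=H^0(X,\cO_X(K_X+D))$ failing — wait, I need the minimal $D$ realizing the full $p_g(A)$, so the condition is $H^0(X\setminus E,\cO_X(K_X))=H^0(X,\cO_X(K_X+D))$, saying every "pole along $E$" section already lives in $\cO_X(K_X+D)$. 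I would then check that the set of cycles $D\ge 0$ satisfying this equality is closed under $\min$ (if two cycles each capture all sections, so does their minimum, by comparing orders of vanishing component-by-component), so a unique smallest such cycle $C$ exists, and by the duality above $h^1(\cO_C)=p_g(A)$ while $h^1(\cO_D)<p_g(A)$ for $0\le D<C$; this is exactly the defining property of $C_X$, so $C=C_X$.

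For the second assertion, let $g\colon X'\to X$ be the blow-up at a point $x\in\supp(C_X)$ with exceptional curve $E_0$. Using $K_{X'}=g^*K_X+E_0$ and the projection formula, I would compute $H^0(X'\setminus E',\cO_{X'}(K_{X'}))=H^0(X\setminus E,\cO_X(K_X))$ (blowing up a point changes nothing on the complement of the exceptional set), and $H^0(X',\cO_{X'}(K_{X'}+g^*C_X-E_0))=H^0(X',\cO_{X'}(g^*(K_X+C_X)))=H^0(X,\cO_X(K_X+C_X))$, which equals $H^0(X\setminus E,\cO_X(K_X))$ by the first part. Minimality of $g^*C_X-E_0$ among cycles on $X'$ with this property: any smaller cycle would push forward (or pull back the relevant comparison) to a cycle on $X$ smaller than $C_X$ with the same section-capturing property, contradicting minimality of $C_X$; here I need to be slightly careful about the coefficient of $E_0$, using that $x\in\supp(C_X)$ forces $g^*C_X-E_0\ge 0$ and that decreasing the $E_0$-coefficient would lose a section because $x$ was in the support of the cohomological cycle. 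So $C_{X'}=g^*C_X-E_0$.

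The last sentence is immediate: if $D>0$ has no common component with $C_X$, then $D\not\ge$ any cycle capturing all of $H^0(X\setminus E,\cO_X(K_X))$ in a way that forces $h^1(\cO_D)=p_g(A)$; more directly, $H^1(\cO_D)$ is a subquotient built from the components of $D$, and the argument above shows $h^1(\cO_D)=p_g(A)$ would force $D\ge C_X$ (since $C_X$ is the \emph{unique} minimal such cycle and the "capturing" condition is monotone and closed under $\min$), contradicting disjointness of supports unless $C_X=0$, i.e.\ $A$ rational, in which case $h^1(\cO_D)=0$ anyway. I expect the main obstacle to be the bookkeeping in the duality step — getting the precise statement "$h^1(\cO_D)$ is dual to $H^0(X\setminus E,\cO_X(K_X))/H^0(X,\cO_X(K_X+D))$" correct on a non-proper resolution, and correctly handling the order-of-vanishing comparison at the point $x$ when checking minimality on $X'$; once that is pinned down the rest is formal manipulation with $K_{X'}=g^*K_X+E_0$.
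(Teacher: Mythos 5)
The paper itself does not prove this proposition --- it is quoted from \cite[Proposition 2.6]{OWYcore} --- and the duality route you take is the standard one behind that result; your identification of $C$ with $C_X$ and the closure-under-$\min$ argument are fine. But there are three concrete problems. First, the duality is mis-stated where you first write it: from $0\to\cO_X(K_X)\to\cO_X(K_X+D)\to\omega_D\to 0$ and $H^1(\cO_X(K_X))=0$, the module $H^1(\cO_D)$ is dual to $H^0(\cO_X(K_X+D))/H^0(\cO_X(K_X))$, i.e.\ to the \emph{image} of $H^0(\cO_X(K_X+D))$ inside the length-$p_g(A)$ module $H^0(X\setminus E,\cO_X(K_X))/H^0(\cO_X(K_X))$, not to the cokernel of $H^0(\cO_X(K_X+D))\hookrightarrow H^0(X\setminus E,\cO_X(K_X))$. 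You do eventually land on the correct criterion, but the justification as written would give the opposite conclusion. Second, your argument for the last sentence only shows $h^1(\cO_D)\ne p_g(A)$, whereas the claim is $h^1(\cO_D)=0$. With the correct duality this is easy: any $s\in H^0(\cO_X(K_X+D))$ lies in $H^0(X\setminus E,\cO_X(K_X))=H^0(\cO_X(K_X+C_X))$, hence in $H^0(\cO_X(K_X+\min(D,C_X)))=H^0(\cO_X(K_X))$ because $D$ and $C_X$ share no component; so $h^1(\cO_D)=\ell\bigl(H^0(\cO_X(K_X+D))/H^0(\cO_X(K_X))\bigr)=0$.

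Third, and most seriously, the minimality of $g^*C_X-E_0$ is not mere bookkeeping. A push-forward argument does pin down the coefficients of the strict transforms $\widetilde{E}_i$ (for $D'\le g^*C_X-E_0$ with $h^1(\cO_{D'})=p_g(A)$ one gets $h^1(\cO_{g_*D'})=h^1(\cO_{g^*g_*D'})\ge h^1(\cO_{D'})$, forcing $g_*D'=C_X$), but it says nothing about the coefficient of $E_0$: every $D'=\widetilde{C}+kE_0$ with $0\le k\le m-1$, where $\widetilde{C}$ is the strict transform of $C_X$ and $m=\mult_x(C_X)$, pushes forward to $C_X$. Writing $\di_{X'}(g^*s)=g^*\di_X(s)+E_0$, the $E_0$-coefficient of $C_{X'}$ equals $\max_s\bigl(\mult_x(P_s)-\mult_x(Z_s)\bigr)-1$ over $s\in H^0(X\setminus E,\cO_X(K_X))$ with $\di_X(s)=Z_s-P_s$, so the equality $C_{X'}=g^*C_X-E_0$ is \emph{equivalent} to the existence of a $2$-form whose polar divisor has multiplicity exactly $m$ at $x$ and whose zero divisor misses $x$, i.e.\ to $\cO_X(K_X+C_X)$ being generated at $x$ by $H^0(\cO_X(K_X+C_X))$. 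This is the real content of the blow-up formula (vacuous only when $m=1$), and your sketch does not establish it; you need to prove this generation statement or cite it from \cite{OWYcore}.
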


\begin{prop} [{\cite[3.10]{OWYgood}}] \label{p:CX} 
Assume that $p_g(A)>0$.
Let $Z>0$ be a cycle such that $\cO_X(-Z)$ has no fixed component.
Then $Z$ is a $p_g$-cycle if and only if
$\cO_{C_X}(-Z)\cong \cO_{C_X}$.
\end{prop}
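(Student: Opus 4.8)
The plan is to reduce both directions to a single cohomological identity,
\[
h^1(\cO_{C_X}(-D))=h^1(\cO_X(-D))\qquad (D \text{ an effective cycle}),
\]
and then to analyse the line bundle $\cO_{C_X}(-Z)$ on $C_X$ directly. Two preliminary observations: $Z$ is a $p_g$-cycle if and only if $h^1(\cO_X(-Z))=p_g(A)$ (one implication is the definition, the other the last assertion of \proref{p:le-cycle}); and $Z$ is anti-nef, since $\cO_X(-Z)$ having no fixed component gives $h\in H^0(\cO_X(-Z))$ with $(h)_E=Z$ and $(h)_E$ is anti-nef --- hence $\cO_{C_X}(-Z)=\cO_X(-Z)\otimes\cO_{C_X}$ has non-negative degree $-Z\cdot E_i$ on every component $E_i$ of $C_X$.

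For the identity I would argue by duality. First, $H^1(X,\omega_X(D))=0$ for every effective cycle $D$ (here $\omega_X=\cO_X(K_X)$): indeed $H^1(X,\omega_X)=0$ by Grauert--Riemenschneider, $H^2$ of every coherent sheaf on $X$ vanishes (the fibres of $f$ have dimension $\le 1$ and $\Spec A$ is affine), and the cohomology sequence of $0\to\omega_X\to\omega_X(D)\to\omega_X(D)|_D\to 0$ gives the claim. Feeding this, together with $H^0_E(\omega_X(D))=0$, into the local-cohomology sequence and invoking the duality $\ell_A\bigl(H^1(X,\cO_X(-D))\bigr)=\ell_A\bigl(H^1_E(X,\cO_X(K_X+D))\bigr)$ for the resolution, one obtains
\[
h^1(\cO_X(-D))=\dim_k \omega_A\big/H^0(X,\cO_X(K_X+D)),\qquad \omega_A:=H^0(X\setminus E,\cO_X(K_X)).
\]
Since $C_X+D\ge C_X$, \proref{p:CC} gives $H^0(X,\cO_X(K_X+C_X+D))=\omega_A$, so the cohomology sequence of $0\to\cO_X(K_X+D)\to\cO_X(K_X+C_X+D)\to\cO_{C_X}(K_X+C_X+D)\to 0$ (using $H^1(\omega_X(D))=0$ once more) identifies $H^0(\cO_{C_X}(K_X+C_X+D))$ with $\omega_A/H^0(X,\cO_X(K_X+D))$. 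Finally, the adjunction $\omega_{C_X}\cong\cO_{C_X}(K_X+C_X)$ and Serre duality on the Cohen--Macaulay curve $C_X$ yield $h^1(\cO_{C_X}(-D))=h^0(\omega_{C_X}(D))=\dim_k\omega_A/H^0(X,\cO_X(K_X+D))=h^1(\cO_X(-D))$.

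Granting the identity, the direction $(\Leftarrow)$ is immediate: $\cO_{C_X}(-Z)\cong\cO_{C_X}$ forces $h^1(\cO_{C_X}(-Z))=p_g(A)$, hence $h^1(\cO_X(-Z))=p_g(A)$, so $Z$ is a $p_g$-cycle. For $(\Rightarrow)$, suppose $Z$ is a $p_g$-cycle; then $h^1(\cO_{C_X}(-Z))=p_g(A)=h^1(\cO_{C_X})$, and $\cL:=\cO_{C_X}(-Z)$ is globally generated with $\deg(\cL|_{E_i})=-Z\cdot E_i\ge 0$ for each component $E_i\le C_X$. Assume $\deg(\cL|_{E_\ell})>0$ for some $\ell$. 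By Serre duality on $C_X$, $h^0(\omega_{C_X}\otimes\cL^{-1})=h^1(\cL)=p_g(A)=h^0(\omega_{C_X})$; for a general (hence nonzerodivisor) section $s$ of $\cL$, the sequence $0\to\omega_{C_X}\otimes\cL^{-1}\xrightarrow{\,s\,}\omega_{C_X}\to\omega_{C_X}|_{Z(s)}\to 0$ with $Z(s)\ne\emptyset$ forces every global section of $\omega_{C_X}$ to vanish on $Z(s)$; since the divisors $Z(s)\cap E_\ell$ move in a base-point-free system of positive degree on $E_\ell$ as $s$ varies, every global section of $\omega_{C_X}$ vanishes on $E_\ell$. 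Hence $H^0(\omega_{C_X})=H^0(\mathcal I_{E_\ell}\cdot\omega_{C_X})$, and identifying $\mathcal I_{E_\ell}\cdot\omega_{C_X}$ with $\omega_{C''}$ for $C'':=C_X-E_\ell$ (via the ideal of the reduced curve $E_\ell$ in $C_X$ together with adjunction) gives $p_g(A)=h^0(\omega_{C_X})\le h^0(\omega_{C''})=h^1(\cO_{C''})<p_g(A)$ by the minimality of $C_X$ in \proref{p:CC}, a contradiction. Therefore $Z\cdot E_i=0$ for every component $E_i\le C_X$; then the zero scheme of a general section of the globally generated $\cL$ is an effective Cartier divisor of degree $0$ on each component of the Cohen--Macaulay curve $C_X$, hence empty, and so $\cO_{C_X}(-Z)=\cL\cong\cO_{C_X}$.

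The main obstacle is the identity in the second paragraph: it has to be put together from Grauert--Riemenschneider vanishing, the vanishing of $H^2$ on $X$, the duality description of $h^1(\cO_X(-D))$, adjunction and Serre duality on the possibly non-reduced curve $C_X$, and \proref{p:CC}, combined just so. After that, the only delicate point is the final contradiction, which genuinely relies on the minimality of the cohomological cycle; the identification $\mathcal I_{E_\ell}\cdot\omega_{C_X}\cong\omega_{C_X-E_\ell}$ also needs a little care when $E_\ell$ occurs in $C_X$ with multiplicity $>1$.
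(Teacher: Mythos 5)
The paper itself gives no proof of this statement (it is quoted from \cite[3.10]{OWYgood}), so your argument has to stand on its own. The second half of your proof --- the analysis of the globally generated line bundle $\cO_{C_X}(-Z)$ on $C_X$, Serre duality and the minimality of the cohomological cycle forcing $Z\cdot E_i=0$ for every $E_i\le C_X$, and the conclusion via a nowhere-vanishing section of a degree-zero bundle --- is sound. The gap is in the first half. The vanishing $H^1(X,\omega_X(D))=0$ for an effective cycle $D>0$ is false: the very sequence you invoke, together with $H^1(\omega_X)=H^2(\omega_X)=0$, yields an \emph{isomorphism} $H^1(\omega_X(D))\cong H^1(\omega_X(D)|_D)=H^1(\omega_D)$, and $h^1(\omega_D)=h^0(\cO_D)\ge 1$ by Serre duality on $D$. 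Consequently both of your intermediate duality formulas fail. Indeed, since $H^0(\omega_X(D))\to H^0(\omega_D)$ \emph{is} surjective (here $H^1(\omega_X)=0$ is used correctly), one always has $\dim_k\bigl(\omega_A/H^0(X,\omega_X(D))\bigr)=p_g(A)-h^1(\cO_D)$; taking $D=Z$ a sufficiently high multiple of a $p_g$-cycle, so that $Z\ge C_X$ and $h^1(\cO_Z)=p_g(A)$, the right-hand side is $0$ while $h^1(\cO_X(-Z))=p_g(A)$. So neither $h^1(\cO_X(-D))$ nor $h^1(\cO_{C_X}(-D))$ equals $\dim_k\omega_A/H^0(\omega_X(D))$ in general, and your derivation of the key identity collapses.

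The identity you actually need, $h^1(\cO_{C_X}(-Z))=h^1(\cO_X(-Z))$ for $Z$ with $\cO_X(-Z)$ having no fixed component, is nevertheless true, but it must be proved differently. The inequality $\le$ is just the surjectivity of $H^1(\cO_X(-Z))\to H^1(\cO_{C_X}(-Z))$ coming from $H^2(\cO_X(-Z-C_X))=0$; note that this surjection alone already gives your direction $(\Leftarrow)$, so no identity is needed there. For the reverse inequality, pick $h\in H^0(\cO_X(-Z))$ with $(h)_E=Z$ and $\di_X(h)=Z+H$, so that $\cO_X(-Z)\cong\cO_X(H)$ and $\cO_X(-Z-C_X)\cong\cO_X(H-C_X)$. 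The map $H^1(\cO_X(-C_X))\to H^1(\cO_X)$ is zero, because $H^1(\cO_X)\to H^1(\cO_{C_X})$ is a surjection between modules of the same finite length $p_g(A)$, hence injective. Since $H$ is affine (it is finite over the affine curve $\di_{\spec A}(h)$), the maps $H^1(\cO_X(-C_X))\to H^1(\cO_X(H-C_X))$ and $H^1(\cO_X)\to H^1(\cO_X(H))$ are surjective, and the commutative square of inclusions then forces $H^1(\cO_X(H-C_X))\to H^1(\cO_X(H))$, i.e.\ $H^1(\cO_X(-Z-C_X))\to H^1(\cO_X(-Z))$, to be zero; hence $H^1(\cO_X(-Z))\to H^1(\cO_{C_X}(-Z))$ is injective. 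With this repair the rest of your argument goes through.
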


\begin{prop}[{\cite{OWYrees}}]\label{p:pgC}
Let $I$ be an integrally closed $\m$-primary ideal.
Then $I$ is a $p_g$-ideal if and only if
the Rees algebra $\bigoplus_{n\ge 0}I^nt^n\subset A[t]$ is a Cohen-Macaulay normal domain. 
\end{prop}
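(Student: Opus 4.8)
The plan is to route everything through the normal Hilbert polynomial of $I$: I would use Kato's Riemann--Roch formula to identify its constant term with $p_g(A)-\lim_n q(\overline{I^n})$, and then invoke the known equivalence, in a two-dimensional Cohen--Macaulay local ring, between the vanishing of that constant term and the Cohen--Macaulayness of the normalized Rees algebra. Since $A$ is normal of dimension two it is Cohen--Macaulay, so $R:=\bigoplus_{n\ge 0}I^nt^n$ and its normalization $\overline R=\bigoplus_{n\ge 0}\overline{I^n}t^n$ have dimension three; $R$ is always a domain, and $R$ is normal if and only if $R=\overline R$, i.e. if and only if every $I^n$ is integrally closed. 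Fix a resolution $X\to\spec A$ on which $I=I_Z$; then $I^n\cO_X=\cO_X(-nZ)$ is generated by global sections, so $\overline{I^n}=H^0(X,\cO_X(-nZ))$ (integral closures of powers are computed on $X$), hence $q(\overline{I^n})=h^1(\cO_X(-nZ))$, for all $n$.

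First I would read off the normal Hilbert polynomial. By \proref{p:le-cycle} the sequence $h^1(\cO_X(-nZ))$ is non-increasing in $n$ and bounded above by $p_g(A)=h^1(\cO_X)$; write $q_\infty$ for its eventual value. Substituting $nZ$ for $Z$ in Kato's formula gives
\[
\ell_A(A/\overline{I^n})=-\frac{n^2Z^2+nK_XZ}{2}+p_g(A)-q(\overline{I^n}),
\]
so for $n\gg 0$ the left-hand side equals the polynomial $-(Z^2n^2+K_XZn)/2+(p_g(A)-q_\infty)$; comparing constant terms with the normal Hilbert polynomial $\overline{e}_0\binom{n+1}{2}-\overline{e}_1 n+\overline{e}_2$ identifies $\overline{e}_2(I)=p_g(A)-q_\infty\ge 0$. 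If $q_\infty=p_g(A)$, then the sequence being non-increasing with first term $q(I)=q(\overline I)\le p_g(A)$ forces $q(I)=p_g(A)$, i.e. $I$ is a $p_g$-ideal; conversely, if $I$ is a $p_g$-ideal then by \proref{p:sg}(1) every $I^n$ is a $p_g$-ideal, hence integrally closed, so $R=\overline R$ and $q(\overline{I^n})=q(I^n)=p_g(A)$ for all $n$, whence $q_\infty=p_g(A)$. Thus $\overline{e}_2(I)=0$ if and only if $I$ is a $p_g$-ideal, and in that case moreover $R=\overline R$.

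It then remains only to link $\overline{e}_2(I)=0$ with Cohen--Macaulayness of the Rees algebra. If $I$ is a $p_g$-ideal, then $R=\overline R$ is a normal domain and $\overline{e}_2(I)=0$ by the previous paragraph, and Itoh's theorem on the normal Hilbert coefficients of an $\m$-primary ideal in a two-dimensional Cohen--Macaulay local ring yields that $\overline R$ is Cohen--Macaulay; alternatively \proref{p:sg}(2) gives $I^2=QI$, so $\gr_I A$ is Cohen--Macaulay with $a$-invariant at most $-1$ and the Goto--Shimoda criterion gives $R$ Cohen--Macaulay. Conversely, if $R$ is a Cohen--Macaulay normal domain, normality forces $R=\overline R$, so $\overline R$ is Cohen--Macaulay, whence $\overline{e}_2(I)=0$ by Itoh's theorem, whence $I$ is a $p_g$-ideal. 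The conceptual content is the Kato--Riemann--Roch identification of the constant term, which is routine once that formula is granted; the steps requiring care are the external input ``$\overline{e}_2=0\iff\overline R$ Cohen--Macaulay'' in dimension two (equivalently, a Goto--Shimoda-type criterion through $\gr_I A$ and its $a$-invariant) and the bookkeeping that separates $R$ from its normalization $\overline R$, which \proref{p:sg}(1) handles.
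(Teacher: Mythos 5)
The paper states this proposition as a quotation from \cite{OWYrees} and gives no proof of its own, so there is no internal argument to compare against; judged on its own terms, your argument is essentially correct. Your route differs from the one in \cite{OWYrees} mainly in packaging: you pass through the second normal Hilbert coefficient, using Kato's Riemann--Roch formula to identify $\overline{e}_2(I)=p_g(A)-\lim_n q(\overline{I^n})$ and then importing the equivalence ``$\overline{e}_2(I)=0$ iff $\overline{R}$ is Cohen--Macaulay'' for $\m$-primary ideals in two-dimensional analytically unramified Cohen--Macaulay local rings (Itoh \cite{ito.Int}, Huneke \cite{Hun.Hilb}, together with Goto--Shimoda \cite{Goto-Shimoda}). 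The cited proof instead works directly with the normal reduction number: $I$ is a $p_g$-ideal iff $\overline{I^{n+1}}=Q\overline{I^n}$ for all $n\ge 1$ (which is what \corref{c:r=n0+1} together with \lemref{l:nZ} records later in this paper), and that condition already forces $I^{n+1}=QI^n=\overline{I^{n+1}}$ for all $n$, giving normality of $R$ and, via Valabrega--Valla and Goto--Shimoda, Cohen--Macaulayness; your parenthetical ``alternative'' (\proref{p:sg}(2) for the forward direction, Goto--Shimoda plus the Koszul sequence \eqref{Secondeq} for the converse) is in substance that argument, and it is the more self-contained of your two routes since it avoids the $\overline{e}_2$-machinery. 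Two small points of care: the attribution of ``$\overline{R}$ Cohen--Macaulay $\Rightarrow\overline{e}_2=0$'' to Itoh is loose --- that implication comes from Goto--Shimoda (normal reduction number $\le 1$) followed by an elementary length count, whereas Itoh's intersection theorem is what one needs for the converse implication $\overline{e}_2=0\Rightarrow \bar r(I)\le 1$ --- and your step ``$q_\infty=p_g(A)\Rightarrow q(I)=p_g(A)$'' correctly uses only the monotonicity of \proref{p:le-cycle}, while the reverse implication genuinely requires \proref{p:sg}(1), as you note.
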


The following theorem shows that the $p_g$-ideals exist plentifully.

\begin{thm}[cf. {\cite[Theorem 5.1]{OWYcore}}]
\label{t:pgf}
Let $I$ be an integrally closed $\m$-primary ideal and $g$ an arbitrary element of $I$. 
Then there exists $h\in I$ such that the integral closure of the ideal $(g,h)$ is a $p_g$-ideal. 
\end{thm}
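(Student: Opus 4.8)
The plan is to produce $h \in I$ so that the pair $(g,h)$ has integral closure equal to $I_Z$ for a suitable $p_g$-cycle $Z$, by first replacing the given $g$ with something having the correct vanishing and then invoking the characterization of $p_g$-cycles in \proref{p:CX}. First I would choose a resolution $f\colon X \to \spec A$ on which $I$ is represented by a cycle $Z_0$, i.e.\ $I\cO_X = \cO_X(-Z_0)$, and on which \assref{ass:NC} applies. Enlarging $Z_0$ if necessary, I want to arrange that the cycle I end up using is already a $p_g$-cycle; by \proref{p:CX} this amounts to finding a cycle $Z>0$ with $\cO_X(-Z)$ having no fixed component and $\cO_{C_X}(-Z)\cong\cO_{C_X}$. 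A standard way to force the latter is to pass to a cycle of the form $Z = Z_0 + (\text{something})$ or, more robustly, to use \thmref{t:pgf}'s cousin \cite[Theorem 5.1]{OWYcore}: the point is that for the element $g$ we are handed, $\di_X(g) = (g)_E + (\text{strict transform})$, and after possibly blowing up we may assume the strict transform $H_g$ of $\di_{\spec A}(g)$ meets $E$ transversally.

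The core of the argument is then: set $Z = (g)_E$, and look for $h \in H^0(\cO_X(-Z))$ whose divisor $\di_X(h)$ has strict transform $H_h$ disjoint from $H_g$ and meeting $C_X$ in a way that trivializes $\cO_{C_X}(-Z)$. Concretely, $\cO_{C_X}(-Z)$ is a line bundle on the (possibly nonreduced) cycle $C_X$; since $C_X$ is supported on the exceptional set and $\deg$ considerations on each component are controlled, a general section $h$ of $\cO_X(-Z)$ restricts on $C_X$ to a nowhere-vanishing section provided $\cO_X(-Z)$ has no base points on $\supp(C_X)$ — which we can guarantee after finitely many blow-ups since $I$, hence $\cO_X(-Z)$, is $\m$-primary and eventually globally generated. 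With such an $h$, \proref{p:CX} gives that $Z$ is a $p_g$-cycle, so $I_Z$ is a $p_g$-ideal. It then remains to check that $\ol{(g,h)} = I_Z$: the inclusion $\subseteq$ is clear since $g,h \in H^0(\cO_X(-Z))$ and $I_Z$ is integrally closed; for $\supseteq$, one shows $(g,h)\cO_X = \cO_X(-Z)$, i.e.\ that $g$ and $h$ have no common zero on $X$ away from where both are forced to vanish — this is where transversality of $H_g$, $H_h$ and their disjointness enter, together with the fact that $(g)_E = (h)_E = Z$ so the common exceptional vanishing is exactly $Z$. Then $\ol{(g,h)} = H^0(\cO_X(-Z)) = I_Z$ by the projection formula / Lipman-type argument.

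The main obstacle I expect is the simultaneous control of two conditions on the auxiliary element $h$: that its strict transform be disjoint from $H_g$ and transversal to $E$ (needed for $(g,h)\cO_X = \cO_X(-Z)$), \emph{and} that $h|_{C_X}$ be a trivializing section of $\cO_{C_X}(-Z)$ (needed for $Z$ to be a $p_g$-cycle). Both are open/general conditions, but one must verify that the relevant linear system $|\cO_X(-Z)|$ is large enough to meet all of them simultaneously — this is precisely the content of the base-point-freeness after blow-up and a Bertini-type argument using \assref{ass:NC}, and it is the step where the hypothesis that we may blow up further (so that $\cO_X(-Z)$ is globally generated and $C_X$ transforms as in \proref{p:CC}) is essential. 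A secondary technical point is ensuring that enlarging the resolution does not destroy the property $q(I_Z) = p_g(A)$; this is handled by \proref{p:CC}, which tells us how $C_X$ behaves under blow-up, so that the trivialization of $\cO_{C_X}(-Z)$ is stable.
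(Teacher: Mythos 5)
First, note that the paper itself gives no proof of this statement: it is quoted from \cite[Theorem 5.1]{OWYcore}. Your overall skeleton nevertheless matches the intended argument in its final stage: pass to an embedded resolution of $\di_{\spec A}(g)$, set $Z=(g)_E$ (so $Z\ge Z_0$, hence $I_Z\subseteq I$ and your $h$ lies in $I$), take a general $h\in I_Z$ whose strict transform $H_h$ is transverse to $E$ and disjoint from $H_g$ (possible since $V(g,h)\subseteq E$), and deduce $(g,h)\cO_X=\cO_X(-Z)$ and $\ol{(g,h)}=I_Z$. That part is fine.

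The gap is in the step where you make $Z$ a $p_g$-cycle. You claim that once $\cO_X(-Z)$ has no base points on $\supp(C_X)$, a \emph{general} section $h$ restricts to a nowhere-vanishing section of $\cO_{C_X}(-Z)$ and hence trivializes it. But whether $\cO_{C_X}(-Z)\cong\cO_{C_X}$ is a property of $Z$ alone and cannot be influenced by the choice of $h$; and it fails for degree reasons whenever $-ZE_i>0$ for some $E_i\le C_X$, in which case \emph{every} section of $\cO_{C_X}(-Z)$ vanishes somewhere on $E_i$, base-point-freeness notwithstanding. \exref{ex:highpg} is a concrete counterexample to your step: there $Z=E$, $C_X=2E$, $\cO_X(-E)$ is globally generated, yet $-ZE=2$ and $q(\m)=1<3=p_g(A)$. (A related problem: when you blow up to improve global generation you replace $Z$ by $\phi^*Z+F$, and you never verify that $g$ still lies in the new ideal, i.e., that $H_g$ passes through your centers.) The mechanism that actually works --- and the one in \cite{OWYcore} --- is to trivialize $\cO_{C}(-Z)$ with the \emph{given} element $g$ rather than with $h$: since $\di_X(g)=Z+H_g$ is principal, $\cO_{C}(-Z)\cong\cO_{C}(H_g)$, so it suffices to separate $H_g$ from the cohomological cycle. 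Blow up a point of $H_g\cap\supp(C)$; then $(g)_E$ increases by the new exceptional curve (so $g$ automatically remains a section, and the new ideal is still contained in $I$), while by \proref{p:CC} the cohomological cycle drops by that curve, so its multiplicity along $H_g$ decreases and the process stops after finitely many steps. On the resulting $Y$ one has $H_g\cap\supp(C_Y)=\emptyset$, hence $\cO_{C_Y}(-(g)_E)\cong\cO_{C_Y}$ and $(g)_E$ is a $p_g$-cycle by \proref{p:CX} (the line bundle has no fixed component, witnessed by $g$); your concluding argument with a general $h$ then goes through verbatim.
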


\section{The normal reduction number}
\label{s:nrn}

%\marginpar{p. 5 lines 9 below Definition 3.1:}

\begin{defn}
Let $I$ be an integrally closed $\m$-primary ideal and $Q$ a minimal reduction of $I$.
We define the {\em normal reduction number} $\bar{r}$ of $I$ by
\[
\bar{r}( I ) = \min \defset{ r \in \Z_{\ge 0} }{\ol{I^{n+1}} = Q \ol{ I^n} \text{ for all $n \ge r$} }.
\]
We shall see that $\bar r(I)$ is independent of the choice of minimal reductions by \corref{c:r=n0+1}.
Let 
\[
\bar r (A)=\max \defset{\bar{r}( I ) }{\text{ $I$ is an integrally closed $\m$-primary ideal of $A$}}.
\]
\end{defn}

%\marginpar{p. 5 lines 9 below Definition 3.1:}

The normal reduction number has been studied by many authors implicitly or explicitly in the context of the Hilbert function and the Hilbert polynomial associated with $\{\ol{I^{n}}\}_{n\ge 0}$ (e.g., \cite{mo.Cl}, \cite{ito.Int}, \cite{Hun.Hilb}).
We study this invariant in terms of cohomology of ideal sheaves of cycles toward a geometric understanding of the normal reduction number.

If $A$ is rational, then by Lipman \cite{Li} (cf. \proref{p:sg}), we have $\ol{{I^2}}=I^2=QI$ for any integrally closed $\m$-primary ideal $I$.
On the other hand, Cutkosky \cite{CharRat} proved that the converse holds too.
Hence we have the following.

\begin{thm}\label{t:r1}
$\bar r(A)=1$ if and only if $A$ is a rational singularity.
\end{thm}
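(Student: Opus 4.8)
The goal is to prove \thmref{t:r1}: that $\bar r(A)=1$ holds precisely when $A$ is a rational singularity.

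\textbf{Plan of proof.} The statement is essentially a repackaging of the results of Lipman and Cutkosky cited just above, reinterpreted through the lens of the normal reduction number, so the main task is to reconcile the two formulations. First I would establish the easy direction: if $A$ is rational, then by Lipman \cite{Li} (recalled in \proref{p:sg}) every integrally closed $\m$-primary ideal $I$ satisfies $\ol{I^2}=I^2=QI$ for any minimal reduction $Q$. Once we know $\ol{I^2}=QI$, we get $\ol{I^3}=\ol{I\cdot I^2}\supseteq I\cdot QI = Q\cdot I^2 = Q\ol{I^2}$, and the reverse inclusion is automatic; iterating, $\ol{I^{n+1}}=Q\ol{I^n}$ for all $n\ge 1$. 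Hence $\bar r(I)\le 1$ for every such $I$, so $\bar r(A)\le 1$. Since $A$ is not a field (it is a two-dimensional local ring), no $\m$-primary ideal equals its own minimal reduction unless... actually one should just note $\bar r(A)\ge 1$ trivially because $\bar r$ is defined as a max over nonnegative integers and for $I=\m$ we cannot have $\ol{\m}=Q$ with $Q$ a proper parameter ideal (a parameter ideal is never integrally closed and equal to $\m$ in dimension $\ge 1$), giving $\bar r(A)=1$. I should double-check whether the paper intends $\bar r(A)\ge 1$ to be obvious or whether $\bar r(A)=0$ could sneak in for, say, a regular local ring; since $\m^2\ne Q\m$ whenever... hmm, actually for a regular local ring of dimension $2$, is $\m^2 = Q\m$? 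No: $\ell(\m/Q\m)$... in fact $\m^2=Q\m$ fails, so $\bar r(\text{regular})=1$, consistent.

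\textbf{The converse.} For the other direction, suppose $\bar r(A)=1$. Then in particular $\bar r(\m)=1$, which a priori only tells us $\ol{\m^{n+1}}=Q\ol{\m^n}$ for $n\ge 1$. But more relevantly, $\bar r(A)=1$ forces $\ol{I^2}=QI$ for \emph{every} integrally closed $\m$-primary ideal $I$, taking $n=1$ in the definition. In particular $\ol{I^2}=I^2$ for every such $I$, i.e., $I^2$ is integrally closed whenever $I$ is integrally closed. Now given two integrally closed $\m$-primary ideals $I$ and $J$, the product $IJ$ satisfies $IJ\subseteq \ol{IJ}\subseteq \ol{(I+J)^2} = (I+J)^2$, and one can extract from $\ol{(I+J)^2}=(I+J)^2$ together with the multiplicativity of integral closure that $\ol{II'}=II'$; alternatively, and more directly, Cutkosky's theorem \cite{CharRat} states precisely that a two-dimensional normal local domain in which the product of any two integrally closed $\m$-primary ideals is integrally closed must be rational. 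So the cleanest route is: from $\bar r(A)=1$ deduce that $I^2$ is integrally closed for every integrally closed $\m$-primary $I$, then promote this to ``$\ol{II'}=II'$ for all integrally closed $\m$-primary $I, I'$'' via the $(I+I')^2$ trick, and finally invoke \cite{CharRat} to conclude $A$ is rational.

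\textbf{Main obstacle.} The only genuinely delicate point is the passage from ``squares of integrally closed ideals are integrally closed'' to ``products of integrally closed ideals are integrally closed,'' since Cutkosky's hypothesis is stated for products. The trick $II' \subseteq \ol{II'}\subseteq \ol{(I+I')^2}$ shows $\ol{II'}$ sits inside $(I+I')^2 = I^2 + II' + I'^2$; one then needs that the ``cross term'' controls things, i.e., that an element of $\ol{II'}$ which lies in $I^2+I'^2$ already lies in $II'$ — this can fail naively, so the argument instead proceeds by noting $\ol{II'}\cdot\ol{II'}=\ol{(II')^2}$ and playing off associated graded considerations, or, most economically, by citing that Cutkosky's characterization is in fact equivalent to the statement about squares (this equivalence is implicit in \cite{CharRat} and \cite{Li}, since Lipman's forward direction already gives both). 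Given that \thmref{t:r1} is presented as a known fact assembled from the literature rather than a new result, I expect the paper's proof to simply say: $(\Leftarrow)$ is \proref{p:sg}(2) applied inductively; $(\Rightarrow)$ taking $n=1$ gives $\ol{I^2}=I^2$ for all integrally closed $\m$-primary $I$, whence by \cite{CharRat} (whose hypothesis is equivalent to, or directly implied by, this) $A$ is rational. I would write it at that level of brevity rather than reproving Cutkosky.
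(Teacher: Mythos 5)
Your overall strategy coincides with the paper's: the paper gives no formal proof at all, but derives the theorem from the two sentences preceding it, namely Lipman's result that in a rational singularity $\ol{I^2}=I^2=QI$ for every integrally closed $\m$-primary ideal $I$, and Cutkosky's theorem that the converse of this implication holds. Your final fallback position --- cite \cite{CharRat} directly for ``$\ol{I^2}=QI$ for all integrally closed $\m$-primary $I$ implies $A$ rational'' --- is exactly the paper's reading of Cutkosky, so the proposal lands in the right place, and your observation that $\bar r(A)\ge 1$ always holds (since some integrally closed $\m$-primary ideal needs more than two generators) is a point the paper leaves implicit. Two concrete corrections are needed, however. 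First, in the forward direction the phrase ``the reverse inclusion is automatic'' is backwards: what you actually established is $Q\ol{I^2}\subseteq\ol{I^3}$, which is the trivial containment, while the substantive one, $\ol{I^{3}}\subseteq Q\ol{I^{2}}$, is not automatic. It follows instead from Lipman's product theorem: by \remref{r:rat} and \proref{p:sg}, every integrally closed $\m$-primary ideal of a rational singularity is a $p_g$-ideal and products of $p_g$-ideals are again $p_g$-ideals (in particular integrally closed), so $\ol{I^{n+1}}=I^{n+1}=I^{n-1}\cdot QI=Q\ol{I^{n}}$. Second, the $(I+I')^2$ device in your converse is both unnecessary and broken: $I+I'$ need not be integrally closed, so the hypothesis cannot be applied to it, and even the containment $\ol{II'}\subseteq I^2+II'+I'^2$ would not yield $\ol{II'}=II'$, as you yourself concede. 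Since the citation of Cutkosky is for the reduction condition itself, no bridge from squares to products of distinct ideals is required, and that detour should simply be deleted.
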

Note that the rationality is determined by the resolution graph (see \cite{artin.rat}).

The main result of this section is the following.

\begin{thm}\label{t:ellr}
If $A$ is an elliptic singularity, then $\bar r(A)=2$.
\end{thm}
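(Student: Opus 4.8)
The plan is to prove the two inequalities $\bar r(A)\le 2$ and $\bar r(A)\ge 2$ separately. The lower bound $\bar r(A)\ge 2$ should follow immediately from \thmref{t:r1}: an elliptic singularity is not rational (since $p_g(A)>0$ for elliptic singularities — indeed $\chi(Z_E)=0$ forces $h^1(\cO_{Z_E})=p_a(Z_E)=1$, hence $p_g(A)\ge 1$), so $\bar r(A)\ne 1$; and since $\bar r(A)\ge 1$ always holds (for any integrally closed $\m$-primary ideal one cannot have $\ol{I^{n+1}}=Q\ol{I^n}$ for $n=0$ in general, or more simply rationality is equivalent to $\bar r(A)=1$), we get $\bar r(A)\ge 2$.

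The substance is the upper bound: for every integrally closed $\m$-primary ideal $I$ with minimal reduction $Q$, we must show $\ol{I^{n+1}}=Q\,\ol{I^n}$ for all $n\ge 2$. The approach is to pass to a resolution $f\colon X\to\spec A$ on which $I$ and all the relevant powers are represented by cycles. Write $I=I_Z$ with $Z=(h)_E$ for a general $h\in I$, so that $\ol{I^n}$ is represented by $nZ$ on a suitable (possibly further blown-up) resolution. The inclusion $Q\,\ol{I^n}\subseteq\ol{I^{n+1}}$ is automatic, so the point is the reverse inclusion. The standard translation (as in the proof that $p_g$-ideals have reduction number one, \proref{p:sg}) reduces this to a cohomological vanishing: writing $Q=(a,b)$ with $a,b$ a general minimal reduction so that $(a)_E=(b)_E=Z$ and the divisors $\di(a),\di(b)$ meet $E$ appropriately, one obtains an exact Koszul-type sequence
\[
0\to \cO_X(-(n-1)Z)\xrightarrow{(b,-a)} \cO_X(-nZ)^{\oplus 2}\xrightarrow{(a,b)} \cO_X(-(n+1)Z)\to \cC\to 0,
\]
where $\cC$ is a sheaf supported on the curve cut out by $a=b=0$ (the strict transform $H$ of $\di(h)$). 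One shows $\ol{I^{n+1}}=Q\,\ol{I^n}$ follows once $H^1(\cO_X(-nZ))\to H^1(\cO_X(-(n+1)Z))$ is surjective for $n\ge 2$ together with control of $H^0$, equivalently once $h^1(\cO_X(-nZ))$ has stabilized by $n=2$; that is, the key claim becomes
\[
h^1(\cO_X(-2Z))=h^1(\cO_X(-3Z))=h^1(\cO_X(-nZ))\quad\text{for all }n\ge 2.
\]

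To establish this stabilization at $n=2$, I would invoke the elliptic structure. By \proref{p:le-cycle} the sequence $h^1(\cO_X(-nZ))$ is non-increasing and bounded below by $0$; the issue is why it cannot strictly decrease after step two. Consider the exact sequences $0\to\cO_X(-(n+1)Z)\to\cO_X(-nZ)\to\cO_Z(-nZ)\to 0$. The drop $h^1(\cO_X(-nZ))-h^1(\cO_X(-(n+1)Z))$ is governed by $h^1(\cO_Z(-nZ))$ and the cokernel of $H^0(\cO_X(-nZ))\to H^0(\cO_Z(-nZ))$. Using that $Z$ is anti-nef (being $(h)_E$) and that $A$ is elliptic, $Z$ dominates $E_{min}$ enough that for $n\ge 2$ the divisor $nZ$ — or rather $-K_X-nZ$ versus $E_{min}$ — falls in the range where R\"ohr's vanishing \lemref{l:rohr} applies: after one further round, $-nZ + $ (something) becomes sufficiently negative that $H^1(\cO_Z(-nZ))$ contributes exactly what is needed, and the elliptic sequence bookkeeping of \proref{p:tomari-nef} pins down the exact value. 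Concretely, I expect that $h^1(\cO_X(-nZ))$ for $n\ge 1$ equals the number of elliptic-sequence cycles $Z_{B_i}$ with $Z_{B_i}\cdot Z = 0$, or a similar combinatorial quantity, and that this count stabilizes precisely at $n=2$ because $Z\ge E$ forces $C_1\cdot Z<0$ already (where $C_1=Z_{B_0}+Z_{B_1}$), so only the "constant part" $C_0=Z_E$ and possibly one more term survive — and one checks that passing from $n=1$ to $n=2$ already kills the last term that $n=1$ still sees.

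\textbf{Main obstacle.} The delicate point is the uniform bound $\bar r(I)\le 2$ rather than merely $\bar r(I)<\infty$: one must show the cohomological sequence stabilizes at the second step, not some later step, for \emph{every} integrally closed $\m$-primary ideal $I$ simultaneously. The worry is ideals $I$ for which $Z=(h)_E$ is only mildly positive (e.g.\ $Z$ close to a multiple of $E_{min}$ itself, or $Z$ with small $-Z\cdot E_{min}$), since then $nZ$ takes longer to enter the R\"ohr vanishing range; one has to verify that even in the worst case $n=2$ suffices. I expect the resolution of this to rely crucially on \proref{p:Eexists}(2) — that any reduced cycle avoiding $E_{min}$ is rational and meets the $\chi=0$ locus in at most one point — which keeps the "rational tail" of $X$ from contributing extra $h^1$, so that all the interesting cohomology is concentrated on the minimally elliptic part where the elliptic sequence gives exact control. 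Handling the interaction between the general element $h$ (Assumption~\ref{ass:NC}) and the cycle support, so that the Koszul sheaf $\cC$ really is rational/negligible, will be the other technical chore.
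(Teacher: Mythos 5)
Your overall architecture --- reduce $\ol{I^{n+1}}=Q\ol{I^n}$ to stabilization of $h^1(\cO_X(-nZ))$ via the Koszul sequence, then use the elliptic structure (R\"ohr's vanishing and Tomari's elliptic sequence) to force early stabilization --- is exactly the paper's (\corref{c:r=n0+1} combined with \lemref{l:h1}). But there are two problems. First, an off-by-one error in the reduction: writing $a_n=h^1(\cO_X(-nZ))$, the Koszul sequence at level $n$ shows that $Q\ol{I^n}=\ol{I^{n+1}}$ exactly when $a_{n-1}-2a_n+a_{n+1}=0$; demanding this for all $n\ge 2$ forces the non-negative, non-increasing differences $a_1-a_2,\ a_2-a_3,\dots$ to be all equal, hence all zero, i.e.\ one needs $a_1=a_2=\cdots$, stabilization from $n=1$ (this is the content of $\bar r(I)=n_0(I)+1$ in \corref{c:r=n0+1}). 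Your stated key claim $a_2=a_3=\cdots$ is strictly weaker and would only give $\bar r(I)\le 3$: if $a_1>a_2=a_3=\cdots$, the level-$2$ Koszul map already fails to be surjective and $\ol{I^3}\ne Q\ol{I^2}$.

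Second, and more seriously, the stabilization itself is never proved: the paragraph beginning ``To establish this stabilization'' is a list of expectations (``I expect that\dots'', ``one checks that\dots''), and your own ``Main obstacle'' concedes the point. The paper's \lemref{l:h1} settles it in one stroke, and at $n=1$. If $ZE_{min}<0$, then \lemref{l:rohr} gives $h^1(\cO_X(-nZ))=0$ for all $n\ge 1$ directly. If $ZE_{min}=0$, let $B$ be the maximal reduced connected cycle with $ZB=0$ containing $\supp(E_{min})$ and let $C=\sum Z_{B_i}$ be the sum of the elliptic sequence on $B$; by \proref{p:tomari-nef}, $C$ is anti-nef on $B$ with $\chi(C)=0$ and $CE_{min}<0$, and \proref{p:Eexists}(2) together with the maximality of $B$ shows $Z+C$ is anti-nef on all of $E$. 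Hence $H^1(\cO_X(-Z-C))=0$ by \lemref{l:rohr}; since $C$ is supported in $Z^{\bot}$ we have $\cO_C(-Z)\cong\cO_C$ by \remref{r:triv-bot}, whence $h^1(\cO_X(-Z))=h^1(\cO_C)\le h(B)$. The reverse inequality $h^1(\cO_X(-Z))\ge h^1(\cO_X(-n_0Z))=h(Z^{\bot})=h(B)$ comes from \lemref{l:nZ}(1),(3). So $a_1$ already equals the stable value, $n_0(Z)\le 1$, and $\bar r(I)\le 2$. The uniformity over all $I$ that worries you is automatic, because the argument uses nothing about $Z$ beyond anti-nefness and the sign of $ZE_{min}$; no case-by-case control of ``how positive'' $Z$ is is needed.
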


\begin{defn}
Let $D\ge 0$ be an effective cycle and let
\[
h(D)=\max  \defset{h^1(\cO_{D'})}{D'\ge 0, \supp (D')\subset \supp (D)},
\]
where we put $h^1(\cO_{D'})=0$ if $D'=0$.
There exists a unique minimal cycle $C$ such that $h^1(\cO_C)=h(D)$
(cf. \cite[\S 4.8]{chap}). 
We call $C$ the {\em cohomological cycle on $D$}.
We define a reduced cycle $D^{\bot}$ to be the sum of the components $E_i\subset E$ such that $DE_i=0$.
\end{defn}

%\marginpar{p.7 lines 8-9 from the top.}

\begin{rem}\label{r:triv-bot}
Suppose that $\cO_X(-Z)$ has no fixed component.
Then there exists  a function $h\in H^0(\cO_X(-Z))$ such that $\di_X(h)=Z+H$, where $H$ is the strict transform of $\di_{\spec A}(h)$. Since $ZE_i=-HE_i$ for any $E_i\subset E$, it follows that $\supp (Z^{\bot})$ and $\supp (H)$ have no intersection.
Thus for any cycle $F>0$ supported in $Z^{\bot}$, we have $\cO_F(-Z)=\cO_F(-\di_X(h))\cong \cO_F$.  
\end{rem}

Let $Z>0$ be a cycle on $X$ and let $\cL(n)=\cO_X(-nZ)$.

If $\cO_X(-Z)$ has no fixed component, we define an integer $n_0(Z)$ by 
\[
n_0(Z)=\min\defset{n\in \Z_{\ge 0}}{h^1(\cL(n))= h^1(\cL(m)) \text{ for } m\ge n}.
\] 
This is well-defined by \lemref{l:nZ} (1).

\begin{lem}[See {\cite[3.1 and 3.4]{OWYrees}}]\label{l:nZ}
Suppose that $\cO_X(-Z)$ has no fixed component.
Let $C$ denote the cohomological cycle on $Z^{\bot}$. 
Then we have the following.
\begin{enumerate}
\item $h^1(\cL(n))\ge h^1(\cL(n+1))$ for $n\ge 0$.
\item If $\cO_X(-Z)$ is generated by global sections, then
\[
n_0(Z)=\min\defset{n\in \Z_{\ge 0}}{h^1(\cL(n))= h^1(\cL(n+1))}.
\] 
If $Z$ is a $p_g$-cycle, then $n_0(Z)=0$.

\item Let $n_0=n_0(Z)$.
Then $\cO_C(-n_0Z)\cong \cO_C$ and $h^1(\cL(n_0(Z)))=h^1(\cO_C)$. 
\item $\cL(n)$ is generated by global sections for $n>n_0$.
\end{enumerate}
\end{lem}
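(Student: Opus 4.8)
The plan is to derive everything from two exact sequences on $X$ --- one coming from a general element of $I_Z$, the other the ``restriction to $C$'' sequence --- together with the standing facts that $f\colon X\to\spec A$ is proper and birational with fibres of dimension $\le 1$, so that $H^2(X,\mathcal G)=0$ for every coherent $\mathcal G$ and every $H^1(X,\mathcal G)$ has finite length. Part $(1)$ is \proref{p:le-cycle} with $Z'=nZ$ (legitimate, since $\cO_X(-Z)$ has no fixed component): $h^1(\cL(n+1))=h^1(\cO_X(-nZ-Z))\le h^1(\cO_X(-nZ))=h^1(\cL(n))$, so $\{h^1(\cL(n))\}_{n\ge0}$ is non-increasing and bounded below, hence eventually constant, and $n_0(Z)$ is well defined. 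Now fix a general $h\in I_Z$; by \assref{ass:NC} one has $\di_X(h)=Z+H$ with $H$ a disjoint union of nonsingular curves meeting $E$ transversally, and since $h$ vanishes on $Z+H$, multiplication by $h$ identifies $\cO_X(-nZ)$ with the subsheaf $\cO_X(-(n+1)Z-H)$ of $\cO_X(-(n+1)Z)$, giving
\[
0\longrightarrow\cL(n)\xrightarrow{\ \cdot h\ }\cL(n+1)\longrightarrow\cO_H(-(n+1)Z)\longrightarrow 0 .
\]
As $H$ is finite over the affine scheme $\di_{\spec A}(h)$ it is affine, so $H^1(H,-)=0$; with $H^2(X,-)=0$ this yields a surjection $\cdot h\colon H^1(\cL(n))\twoheadrightarrow H^1(\cL(n+1))$ together with
\[
h^1(\cL(n))-h^1(\cL(n+1))=\dim_k\Coker\bigl(H^0(\cL(n+1))\xrightarrow{\ \res_H\ }H^0(\cO_H(-(n+1)Z))\bigr),
\]
which I will refer to as $(\star)$. (Applying this to $h^n$ also shows $\cO_X(-nZ)$ has no fixed component for every $n\ge1$.)

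For $(3)$: since $\supp C\subset\supp(Z^{\bot})$, \remref{r:triv-bot} gives $\cO_C(-Z)\cong\cO_C$, hence $\cO_C(-nZ)\cong\cO_C$ for all $n$, in particular $\cO_C(-n_0(Z)Z)\cong\cO_C$; and the sequence $0\to\cO_X(-nZ-C)\to\cO_X(-nZ)\to\cO_C(-nZ)\cong\cO_C\to 0$ with $H^2(X,-)=0$ gives $h^1(\cL(n))\ge h^1(\cO_C)$ for all $n$. For the reverse inequality for $n\gg0$ I enlarge $C$: choose a cycle $C'$ with $\supp C'=\supp(Z^{\bot})$, $C'\ge C$, and $-C'E_i$ as large as I please for every $E_i\le C'$; then $h^1(\cO_{C'})\le h^1(\cO_C)$ (support inside $Z^{\bot}$) while $C'\ge C$ gives the surjection $H^1(\cO_{C'})\twoheadrightarrow H^1(\cO_C)$, so $h^1(\cO_{C'})=h^1(\cO_C)$, and $\cO_{C'}(-nZ)\cong\cO_{C'}$. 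Writing $-nZ-C'=K_X+N$ with $N:=-nZ-C'-K_X$, choosing $C'$ negative enough on $Z^{\bot}$ and then $n\gg0$ makes $NE_i\ge0$ for every $E_i$ (on $E_i\le C'$ one has $ZE_i=0$, so $NE_i=-C'E_i-K_XE_i\ge0$; on the remaining components $ZE_i<0$, so $NE_i\to\infty$). Relative Grauert--Riemenschneider vanishing ($R^1f_*\omega_X(N)=0$ when $N$ is $f$-nef) then gives $H^1(\cO_X(-nZ-C'))=0$, and the sequence $0\to\cO_X(-nZ-C')\to\cO_X(-nZ)\to\cO_{C'}\to 0$ forces $H^1(\cL(n))\cong H^1(\cO_{C'})$. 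So $h^1(\cL(n))=h^1(\cO_C)$ for $n\gg0$, and combined with $(1)$ it equals $h^1(\cO_C)$ for all $n\ge n_0(Z)$; this proves $(3)$.

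For $(2)$, suppose $\cO_X(-Z)$ is globally generated. If $h^1(\cL(n))=h^1(\cL(n+1))$, then $\res_H\colon H^0(\cL(n+1))\to H^0(\cO_H(-(n+1)Z))$ is surjective by $(\star)$. Since $H$ is affine, for any invertible sheaf $\mathcal N$ on $H$ and any $k$-subspace $V\subset H^0(\cO_X(-Z)|_H)$ generating $\cO_X(-Z)|_H$ one has $V\cdot H^0(\mathcal N)=H^0(\mathcal N\otimes\cO_X(-Z)|_H)$ (tensor the surjection $V\otimes\cO_H\twoheadrightarrow\cO_X(-Z)|_H$ by $\mathcal N$ and use $H^1(H,-)=0$). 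Taking $\mathcal N=\cO_H(-(n+1)Z)$ and $V=\res_H(I_Z)$ (which generates because $\cO_X(-Z)$ does), and using $\res_H(I_Z\cdot H^0(\cL(n+1)))=V\cdot\res_H(H^0(\cL(n+1)))$, one gets that $\res_H\colon H^0(\cL(n+2))\to H^0(\cO_H(-(n+2)Z))$ is surjective, hence $h^1(\cL(n+1))=h^1(\cL(n+2))$ by $(\star)$; induction gives $n_0(Z)=\min\{n:h^1(\cL(n))=h^1(\cL(n+1))\}$. If $Z$ is a $p_g$-cycle, then $\cO_{C_X}(-Z)\cong\cO_{C_X}$ by \proref{p:CX}, so $\cO_{C_X}(-nZ)\cong\cO_{C_X}$ for all $n$; as $\cO_X(-nZ)$ has no fixed component, \proref{p:CX} then gives $h^1(\cL(n))=p_g(A)$ for all $n$, so $n_0(Z)=0$. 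Finally, for $(4)$: for $n>n_0(Z)$ the surjectivity in $(\star)$ (for general $h$) shows $\cO_X(-nZ)$ has no base point along a general member $H=H(h)$; combining this with the cohomological criterion for global generation --- blowing up a base point of $\cO_X(-D)$ strictly lowers $h^1$ of $\cO_{X'}(-\pi^*D-E_0)$, which cannot happen once $h^1(\cL(n))$ has reached its minimal stable value --- together with the triviality of $\cO_X(-nZ)$ along the components of $Z^{\bot}$ (\remref{r:triv-bot}), yields that $\cO_X(-nZ)$ is globally generated.

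I expect the main obstacle to be the upper bound in $(3)$: the inequality $h^1(\cL(n))\ge h^1(\cO_C)$ and the eventual constancy of $h^1(\cL(n))$ are formal, but pinning the stable value to $h^1(\cO_C)$ requires showing the cohomology of $\cO_X(-nZ)$ collapses onto $Z^{\bot}$ after enough twisting. The device above --- enlarging $C$ to an arbitrarily negative cycle $C'$ on $Z^{\bot}$ with $h^1(\cO_{C'})=h^1(\cO_C)$ and then applying relative Grauert--Riemenschneider to $-nZ-C'$, being careful that $C'$ is chosen first and $n$ afterwards --- is the crucial point; the propagation step in $(2)$ and the base-point analysis in $(4)$ are the other places where the global-generation hypothesis (respectively the ``no fixed component'' hypothesis together with stabilization) does the real work.
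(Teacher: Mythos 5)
Your parts (1)--(3) are correct. The paper itself outsources (1)--(3) to \cite[3.1 and 3.4]{OWYrees} and only writes out a proof of (4), so there is nothing to compare line by line for those parts; your argument for (3) (enlarging $C$ to a cycle $C'$ with $\supp C'=\supp (Z^{\bot})$, $C'\ge C$, $h^1(\cO_{C'})=h^1(\cO_C)$ and $-C'E_i$ arbitrarily large, then killing $H^1(\cO_X(-nZ-C'))$ for $n\gg 0$ by generalized Grauert--Riemenschneider) is a legitimate self-contained route, and your propagation step in (2) via restriction to the affine curve $H$ and the identity $(\star)$ is the standard one. For (4) the paper uses exactly your exact sequence $0\to\cL(n-1)\xrightarrow{\times h}\cL(n)\to\cO_H(-nZ)\to 0$: for $n>n_0$ the equality $h^1(\cL(n-1))=h^1(\cL(n))$ forces $H^0(\cL(n))\to H^0(\cO_H(-nZ))$ to be surjective, whence $\cL(n)$ is generated by global sections along $H$.

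The one genuine flaw is how you finish (4). The ``cohomological criterion'' you invoke is stated backwards: if $p$ is a base point of $\cO_X(-D)$ (with no fixed component) and $\pi\:X'\to X$ is the blow-up at $p$ with exceptional curve $E_0$, then $0\to\cO_{X'}(-\pi^*D-E_0)\to\cO_{X'}(-\pi^*D)\to\cO_{E_0}\to 0$ gives $h^1(\cO_{X'}(-\pi^*D-E_0))=h^1(\cO_X(-D))+1$, i.e.\ blowing up at a base point \emph{raises} $h^1$ by one; and even with the direction corrected, ``this cannot happen once $h^1(\cL(n))$ has reached its stable value'' does not follow, because the stable value $h^1(\cO_C)$ only bounds $h^1$ of sheaves on $X$ itself --- $h^1$ can and does increase under such blow-ups (this is precisely the mechanism exploited in \proref{p:br=>Imgq}). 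Fortunately the step you are trying to justify is trivial and needs none of this: the section $h^n\in H^0(\cL(n))$ has divisor $nH$ as a section of the invertible sheaf $\cO_X(-nZ)$, hence generates $\cO_X(-nZ)$ at every point off $H$, so the base locus of $\cL(n)$ is contained in $H$; your surjectivity of $H^0(\cL(n))\to H^0(\cO_H(-nZ))$ together with global generation of an invertible sheaf on the affine curve $H$ then completes (4). With that replacement your proof is complete.
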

\begin{proof}
The claims (1)--(3) are proved in \cite{OWYrees}. 
Let $h\in I_Z$ be a general element and consider the exact sequence
\[
0 \to \cL((n-1))\xrightarrow{\times h} \cL(n) \to \cC(n) \to 0,
\]
where $\cC(n)$ is supported on the divisor $\di_X(h)-(h)_E$.
If $n> n_0(Z)$, then $H^0(\cL(n)) \to H^0(\cC(n))$ is surjective since $H^1(\cC(n))=0$.
This shows that $H^0(\cL(n))$ has no base points.
\end{proof}

\begin{defn}
For an integrally closed $\m$-primary ideal $I$ represented by $Z$, let $n_0(I)=n_0(Z)$; this is independent of the choice of representations since so is $q(I)$.
\end{defn}

%\marginpar{p.7 in the middle}

\begin{rem}
Let us explain the invariant $q(I_{n_0Z})$ in terms of ``partial resolution.''
Suppose that $I$ is represented by a cycle $Z>0$ on $X$.
Let $Y$ be the normalization of the blowing-up of $\spec A$ by $I$, namely, $Y=\proj \bigoplus_{n\ge 0}I_{nZ}t^n$.
Let $\phi \: X\to Y$ be the natural morphism and let $Z'=\phi_*Z$. Then $I\cO_Y=\cO_Y(-Z')$.
Since $\phi_*\cO_X=\cO_Y$, from Leray's spectral sequence,
we obtain the following exact sequence for $n\ge 0$.
\begin{equation}
\label{eq:Y}
0 \to H^1(\cO_Y(-nZ')) \to H^1(\cO_X(-nZ)) \to H^0(R^1\phi_*\cO_X\otimes \cO_Y(-nZ')) \to 0.
\end{equation}
Let $\sing (Y)$ denote the set of singular points of $Y$.
Since the support of $R^1\phi_*\cO_X\otimes \cO_Y(-nZ')$ is contained in $\sing(Y)$, we obtain that $R^1\phi_*\cO_X\otimes \cO_Y(-nZ')\cong R^1\phi_*\cO_X$.
It follows from \lemref{l:nZ} (3) that 
\[
\ell_A(R^1\phi_*\cO_X)=\sum_{y\in \sing(Y)}p_g(Y,y)=q(I_{n_0Z}).
\]
The sequence \eqref{eq:Y} implies the following equalities.
\begin{gather*}
q(I_{n_0Z})=p_g(A)-h^1(\cO_Y)=h^1(\cO_X(-nZ)) \; 
\text{ for $n\ge n_0(I)$,} \\
q(I_{nZ})-q(I_{n_0})=h^1( \cO_Y(-nZ')).
\end{gather*}
In particular, $h^1( \cO_Y(-nZ'))=0$ if and only if $n\ge n_0$.
\end{rem}

\begin{cor}\label{c:r=n0+1}
Let $I$ be an integrally closed $\m$-primary ideal represented by $Z$. Then
$\bar r (I)=n_0(I)+1$.
\end{cor}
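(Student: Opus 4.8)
The statement to prove is $\bar r(I) = n_0(I)+1$, where $I = I_Z$ is represented by a cycle $Z>0$ on $X$. Since $\bar r(I)$ is defined via the integral closures $\ol{I^n}$ and minimal reductions $Q$, while $n_0(I) = n_0(Z)$ is defined cohomologically through the sheaves $\cL(n) = \cO_X(-nZ)$, the crux is to translate between the algebraic filtration $\{\ol{I^n}\}$ and the cohomology of the $\cL(n)$. The first step is to identify $\ol{I^n}$ with $H^0(X, \cO_X(-nZ)) = I_{nZ}$: indeed $\cO_X(-nZ) = (I\cO_X)^n$ and taking global sections gives the integral closure of $I^n$ (this is standard; it follows from the valuative criterion for integral dependence together with the fact that $X$ dominates the normalized blowup). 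So $\ol{I^n} = I_{nZ}$ for all $n$.

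\smallskip
\textbf{Key steps.} With that dictionary in hand, I would argue as follows. Fix a minimal reduction $Q = (a,b)$ of $I$; after passing to a common resolution we may assume $Q$ is generated by two general elements of $I$, so that on $X$ the divisor $\di_X(a) = Z + H_a$ with $H_a$ the strict transform, and likewise for $b$, and $\{a=b=0\}$ meets $E$ nowhere (this uses \assref{ass:NC}). Consider the Koszul-type exact sequence
\[
0 \to \cO_X(-(n-1)Z) \xrightarrow{(a,-b)} \cO_X(-nZ)^{\oplus 2} \xrightarrow{(b,a)} \cO_X(-(n+1)Z) \to \cC_n \to 0,
\]
where $\cC_n$ is supported off $E$ (since $a,b$ have no common zero on $E$), hence has vanishing $H^1$ and contributes nothing to $H^0$ beyond what $Q$ already sees on $\spec A$. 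Breaking this into short exact sequences and taking cohomology, one gets that $Q\cdot I_{nZ} \to I_{(n+1)Z}$ is surjective \emph{if and only if} the connecting map into $H^1(\cO_X(-(n-1)Z))$ behaves correctly — more precisely, surjectivity of $Q\ol{I^n} \to \ol{I^{n+1}}$ is controlled by the equality $h^1(\cL(n)) = h^1(\cL(n+1))$. The clean way to see this: by \lemref{l:nZ}(4) $\cL(n)$ is globally generated for $n > n_0$, so for $n \ge n_0+1$ we have the surjection $H^0(\cL(n))\otimes H^0(\cL(1)) \to H^0(\cL(n+1))$ coming from global generation, and a general pair in $H^0(\cL(1))$ forms $Q$; combined with $\ol{I^{n+1}} = I_{(n+1)Z}$ this yields $\ol{I^{n+1}} = Q\ol{I^n}$, so $\bar r(I) \le n_0(I)+1$. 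For the reverse inequality, suppose $n = n_0(I)$, so $h^1(\cL(n)) > h^1(\cL(n+1))$ (using \lemref{l:nZ}(2), assuming $\cO_X(-Z)$ is globally generated, which holds as $I$ is represented by $Z$). Then a length count via Kato's Riemann–Roch — $\ell_A(A/I_{mZ}) + q(I_{mZ}) = -(mZ)^2/2 - K_X(mZ)/2 + p_g(A)$ — shows $\ell_A(I_{nZ}/I_{(n+1)Z}) > \ell_A(Q I_{(n-1)Z}/Q I_{nZ})$ or a comparable strict inequality forcing $\ol{I^{n+1}} \ne Q\ol{I^n}$, so $\bar r(I) \ge n_0(I)+1$. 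Together these give equality, and since $n_0(I)$ is manifestly independent of $Q$, this also proves the independence of $\bar r(I)$ from the choice of minimal reduction, completing the remark in the definition.

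\smallskip
\textbf{Main obstacle.} The delicate point is the reverse inequality: showing that when $h^1(\cL(n_0)) > h^1(\cL(n_0+1))$ one genuinely has $\ol{I^{n_0+1}} \supsetneq Q\ol{I^{n_0}}$, rather than merely failing to prove equality. This requires relating the \emph{jump} in $h^1$ to a jump in the length of $I_{nZ}/QI_{(n-1)Z}$, which I would handle by writing the defining sequence $0 \to Q\cO_X(-(n-1)Z) \to \cO_X(-nZ) \to \cO_X(-nZ)/Q\cO_X(-(n-1)Z) \to 0$ and comparing global sections and $H^1$ on both sides, using that $H^1$ of the quotient sheaf (supported near where $Q$ fails to cut out $nZ$) is exactly the obstruction. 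The bookkeeping with Kato's formula should make the strict inequality fall out, but keeping track of the contributions of the fixed locus of $Q$ versus the ideal-theoretic statement on $A$ is where care is needed. Everything else — the globally-generated range, the identification $\ol{I^n} = I_{nZ}$, the independence of $n_0$ — is routine given the lemmas already established.
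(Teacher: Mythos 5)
You set up the right tool---the Koszul sequence for $Q=(f_1,f_2)$ twisted by $\cL(n)$---but you abandon it at precisely the two places where it is needed, and the substitutes you propose do not work. For the upper bound, global generation of $\cL(n)$ and $\cL(1)$ does \emph{not} imply surjectivity of the multiplication map $H^0(\cL(n))\otimes H^0(\cL(1))\to H^0(\cL(n+1))$; that is a normal-generation statement, far stronger than base-point-freeness, and in this setting it amounts to $I\,\ol{I^n}=\ol{I^{n+1}}$, which is essentially the assertion being proved (and still would not give $Q\,\ol{I^n}=\ol{I^{n+1}}$). For the lower bound you have an off-by-one error: by \lemref{l:nZ}~(2) one has $h^1(\cL(n_0))=h^1(\cL(n_0+1))$, so the strict drop you invoke occurs between $n_0-1$ and $n_0$, not between $n_0$ and $n_0+1$; and the Kato Riemann--Roch bookkeeping you defer to is never carried out. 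Finally, your ``precise'' criterion---that surjectivity of $Q\ol{I^n}\to\ol{I^{n+1}}$ is controlled by $h^1(\cL(n))=h^1(\cL(n+1))$---is not the correct one.

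All of this is repaired by simply taking cohomology of the sequence you wrote down. Since $Q$ is $\m$-primary, $f_1,f_2$ can have no common zero off $E$, and for a general choice they have none on $E$ either; hence your $\cC_n$ vanishes and the Koszul sequence is exact. The image of $H^0(\cL(n))^{\oplus 2}\to H^0(\cL(n+1))$ is $Q\,\ol{I^n}$, and $H^2$ vanishes on $X$, so the long exact sequence gives
\[
\ell_A\bigl(\ol{I^{n+1}}/Q\,\ol{I^n}\bigr)=h^1(\cL(n-1))-2h^1(\cL(n))+h^1(\cL(n+1)),
\]
the second difference of $n\mapsto h^1(\cL(n))$. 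By \lemref{l:nZ}~(1) this sequence is non-increasing and eventually constant, so the second differences vanish for all $n\ge r$ if and only if the first differences are constant for $n\ge r-1$, hence all zero, i.e.\ $h^1(\cL(n))=h^1(\cL(r-1))$ for all $n\ge r$. Comparing the definitions of $\bar r(I)$ and $n_0(I)$ (via \lemref{l:nZ}~(2)) now yields $\bar r(I)=n_0(I)+1$ in one stroke, with both inequalities coming from the same identity; this is exactly the paper's argument, and neither Kato's formula nor any multiplication-map surjectivity is required.
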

\begin{proof}
Let $Q=(f_1, f_2)\subset I_Z$ a minimal reduction of $I_Z$. Then for any integer $n$, we have the following exact sequence.
\begin{equation} \label{Secondeq}
0 \to \cL(n-1)
\stackrel{(f_1,f_2)}{\longrightarrow} \cL(n)^{\oplus 2}
\stackrel{\genfrac{(}{)}{0pt}{}{-f_2}{f_1}}{\longrightarrow}
\cL(n+1) \to 0.
\end{equation}
From \lemref{l:nZ} (1), (2) and the sequence \eqref{Secondeq}, for an arbitrary integer $r\ge 0$,
we have that 
$QI_{nZ}=I_{(n+1)Z}$ for all $n\ge r$ if and only if $h^1(\cL(n))=h^1(\cL(r-1))$ for all $n\ge r$.
\end{proof}

%\marginpar{p. 5 lines 9 below Definition 3.1:}

\begin{rem}
In \cite[Corollary 14]{ito.Int}, Ito proved that if $p_g(A)=1$, then $\m^3=\q\m^2$, where $\q$ is a minimal reduction of the maximal ideal $\m$.
This fact is also obtained as follows.
If $p_g(A)=1$, then $A$ is elliptic (e.g. \cite[p. 425]{wag.ell}).
Therefore, $\ol{\m^3}=\q\ol{\m^2}$ by \thmref{t:ellr}.
Suppose that  $\m=I_Z$ and $\m^2\ne \q\m$. Then $\m$ is not a $p_g$-ideal by \proref{p:sg} (2), namely, $h^1(\cO_X(-Z))=0$. From the exact sequence \eqref{Secondeq} with $n=1$, we have $\ell_A(\ol{\m^2}/\q\m)=1$. Since  $\m^2\ne \q\m$, we obtain $\ol{\m^2}=\m^2$. 
Hence the following ideals coincide:
\[
\q\ol{\m^2}=\q\m^2 \subset \m^3 \subset \ol{\m^3}.
\]
\end{rem}

\begin{lem}\label{l:h1}
Assume that $A$ is an elliptic singularity, $\cO_X(-Z)$ has no fixed component, and $ZE_{min}=0$, where $E_{min}$ is the minimally elliptic cycle.
Let $B$ be the maximal reduced connected cycle such that $ZB=0$ and $\supp (E_{min})\subset B$.
Then $h^1(\cO_X(-Z))=h(B)$ and $n_0(Z) \le 1$.
\end{lem}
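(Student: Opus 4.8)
The plan is to analyze the cohomology $H^1(\cO_X(-Z))$ using the exact sequence associated to the decomposition of $Z$ into its part supported on $B$ and the rest. First I would note that since $ZE_{min}=0$ and $A$ is elliptic, $\supp(E_{min})$ lies inside $Z^{\bot}$, so $B$ is exactly the connected component of the reduced cycle $Z^{\bot}$ containing $\supp(E_{min})$. Thus $B\le Z^{\bot}$, and applying \remref{r:triv-bot}, for every cycle $F>0$ supported on $B$ we have $\cO_F(-Z)\cong\cO_F$. In particular, if $C$ denotes the cohomological cycle on $Z^{\bot}$ (equivalently, if we take the cohomological cycle on $B$, since by \proref{p:Eexists}(1) the minimally elliptic cycle $E_{min}$ sits inside $B$ and the components of $Z^{\bot}$ outside $B$ contribute nothing elliptic), then $h^1(\cO_C(-Z))=h^1(\cO_C)=h(B)$.

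Next I would establish $h^1(\cO_X(-Z))=h(B)$. The inequality $h^1(\cO_X(-Z))\le h(B)$ should follow by taking a large cycle $W$ supported on $Z^{\bot}$ (in fact supported on $B$, since the other components of $Z^{\bot}$ are rational by \proref{p:Eexists}(2)) with $h^1(\cO_W(-Z))=h(B)$ and observing via the exact sequence $0\to\cO_X(-Z-W)\to\cO_X(-Z)\to\cO_W(-Z)\to 0$ together with \proref{p:le-cycle}-type vanishing that the contribution of the ``fixed part'' direction cannot exceed $h^1(\cO_W)$; more precisely, one uses \lemref{l:nZ}(3) applied to $Z$: the cohomological cycle $C$ on $Z^{\bot}$ satisfies $h^1(\cL(n_0))=h^1(\cO_C)=h(Z^{\bot})$, and since $E_{min}\subset B$ and the remaining components of $Z^{\bot}$ form exceptional sets of rational singularities, $h(Z^{\bot})=h(B)$. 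For the reverse inequality $h^1(\cO_X(-Z))\ge h(B)$, I would use that $\cO_X(-Z)|_B$ is trivial: the surjection $H^1(\cO_X(-Z))\twoheadrightarrow H^1(\cO_B(-Z))=H^1(\cO_B)$ coming from $0\to\cO_X(-Z-B)\to\cO_X(-Z)\to\cO_B(-Z)\to 0$ — whose surjectivity holds because $H^2$ of a sheaf on a surface with connected fibers vanishes — gives $h^1(\cO_X(-Z))\ge h^1(\cO_B(-Z))=h^1(\cO_B)$, and iterating over larger cycles supported on $B$ (all still trivialized by $\cO_X(-Z)$) pushes this up to $h(B)$.

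For the bound $n_0(Z)\le 1$: by definition $n_0(Z)$ is the least $n$ with $h^1(\cL(n))=h^1(\cL(n+1))$ once $\cO_X(-Z)$ is globally generated, or more robustly via \lemref{l:nZ}(3), $h^1(\cL(n_0(Z)))=h^1(\cO_C)$ where $C$ is the cohomological cycle on $Z^{\bot}$. Since we have just shown $h^1(\cL(1))=h^1(\cO_X(-Z))=h(B)=h^1(\cO_C)$ — here the key point is that $\cO_X(-Z)|_B\cong\cO_B$ forces $\cO_X(-nZ)|_B\cong\cO_B$ for all $n\ge 0$, so the same argument gives $h^1(\cL(n))=h(B)$ for every $n\ge 1$ (and for $n=0$ we just get $p_g(A)\ge h(B)$) — we conclude $h^1(\cL(n))$ is already constant from $n=1$ onward, hence $n_0(Z)\le 1$.

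The main obstacle I anticipate is the careful bookkeeping in the first inequality $h^1(\cO_X(-Z))\le h(B)$: one must show that twisting by $\cO_X(-Z)$ on the complement of $B$ (i.e., on components where $ZE_i<0$) kills all $H^1$-contributions, so that the entire $H^1$ is accounted for by the trivialized part over $B$. This is where \lemref{l:nZ}(3) and the structure theory of elliptic singularities (\proref{p:Eexists}, \proref{p:tomari-nef}) — telling us precisely which connected cycles carry the elliptic cohomology — do the real work; the rest is routine long-exact-sequence chasing.
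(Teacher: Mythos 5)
Your overall strategy---sandwiching $h^1(\cO_X(-Z))$ between $h(B)$ from below and from above---is the same as the paper's, and your lower bound is sound: for the cohomological cycle $D$ on $B$ one has $\cO_D(-Z)\cong\cO_D$ by \remref{r:triv-bot}, and the restriction $H^1(\cO_X(-Z))\to H^1(\cO_D(-Z))$ is surjective because $H^2$ of any coherent sheaf on $X$ vanishes, giving $h^1(\cO_X(-Z))\ge h^1(\cO_D)=h(B)$. This is a legitimate alternative to the paper's route via \lemref{l:nZ}~(1) and (3). The identification $h(Z^{\bot})=h(B)$ through \proref{p:Eexists}~(2) is also correct.

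The gap is in the upper bound $h^1(\cO_X(-Z))\le h(B)$, which is the heart of the lemma, and neither of the tools you offer delivers it. \proref{p:le-cycle} gives only the monotonicity $h^1(\cO_X(-Z'-Z))\le h^1(\cO_X(-Z'))$, and it requires the subtracted cycle to have no fixed component, which a cycle $W$ supported on $Z^{\bot}$ never has; it does not produce the vanishing $H^1(\cO_X(-Z-W))=0$ that your exact sequence needs. \lemref{l:nZ}~(3) only says $h^1(\cL(n_0))=h(Z^{\bot})=h(B)$; deducing $h^1(\cL(1))\le h(B)$ from that requires $1\ge n_0$, which is precisely the assertion $n_0(Z)\le 1$ you are trying to prove, so the argument is circular. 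What is actually needed is a vanishing theorem with a carefully chosen auxiliary cycle: the paper takes $C=\sum_{i=0}^{m}Z_{B_i}$, the sum of the elliptic sequence on $B$, which by \proref{p:tomari-nef} is anti-nef on $B$ with $\chi(C)=0$ and satisfies $CE_{min}<0$ (from the last term of the sequence). Since $ZE_i<0$ for every $E_i\not\le B$ meeting $B$ (maximality of $B$) and $CE_i\le 1$ for such $E_i$ by \proref{p:Eexists}~(2), the cycle $Z+C$ is anti-nef and $(Z+C)E_{min}<0$, so R\"ohr's theorem (\lemref{l:rohr}) gives $H^1(\cO_X(-Z-C))=0$; then $h^1(\cO_X(-Z))=h^1(\cO_C(-Z))=h^1(\cO_C)\le h(B)$ by \remref{r:triv-bot}. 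Without this specific choice of $C$ and the appeal to \lemref{l:rohr}, the ``routine long-exact-sequence chasing'' you defer to cannot close the argument, and the same vanishing is what makes $h^1(\cL(n))$ constant for all $n\ge 1$ and hence $n_0(Z)\le 1$.
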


%\marginpar{p.7 lines 8-9 from the top.}

\begin{proof}
Let $\{Z_{B_0}, \dots , Z_{B_m}\}$ be the elliptic sequence on $B_0=B$ and let $C=\sum _{i=0}^mZ_{B_i}$. 
By \proref{p:tomari-nef}, $C$ is anti-nef on $B$ and $\chi(C)=0$.
Suppose $E_i\not\subset B$ and $E_i\cap B \ne \emptyset$.
By \proref{p:Eexists} (2), we have that $CE_i\le 1$ and that the cohomological cycle on $Z^{\bot}$ has support in $B$, so $h(B)=h(Z^{\bot})$.
Since $ZE_i<0$ by the definition of $B$, it follows that $Z+C$ is anti-nef on $E$.
By \lemref{l:rohr}, we have $H^1(\cO_X(-Z-C))=0$.
Therefore, by \remref{r:triv-bot}, $h^1(\cO_X(-Z))=h^1(\cO_C(-Z))=h^1(\cO_C)\le h(B)$.
On the other hand, by \lemref{l:nZ} (1) and (3), we have $h^1(\cO_X(-Z))\ge h^1(\cO_X(-n_0Z))=h(B)$.
\end{proof}

\begin{proof}
[Proof of \thmref{t:ellr}]
By \lemref{l:h1}, for any integrally closed $\m$-primary ideal $I$ represented by $Z$, we have $q(I_{nZ})=q(I_Z)$ for $n\ge 1$. By \corref{c:r=n0+1}, we obtain $\bar r (A)\le 2$.
\end{proof}

%\marginpar{p.7 in the middle}

The invariant $q$ is a function on the set of integrally closed $\m$-primary ideals in $A$. 
So we define a set $\Img_A(q) \subset \Z$ by
\[
\Img_A (q)=\defset{q(I)}{\text{$I\subset A$ is an integrally closed $\m$-primary ideal}}.
\]

By \proref{p:le-cycle}, we have 
\[
\Img_A (q) \subset\{0,1,\dots, p_g(A)\}.
\]

Let $N_0$ denote the set of integers $n_0(W)$, where $W$ runs throught cycles on resolutions $Y$ of $\spec A$ such that $\cO_Y(-W)$ has no fixed component.
Then we define an invariant $n_0(A)$ by $n_0(A)=\sup N_0$.

\begin{prop}\label{p:br=>Imgq}
If $n_0(A)=1$, then $\Img_A (q)=\{0,1,\dots, p_g(A)\}$.
\end{prop}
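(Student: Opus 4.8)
The inclusion $\Img_A(q)\subseteq\{0,1,\dots,\pg(A)\}$ is \proref{p:le-cycle}, and the two extreme values already occur: $\pg(A)$ for a $\pg$-ideal (\thmref{t:pgf}) and $0$ for an integrally closed $\m$-primary ideal with $q=0$ (\remref{r:rat}). So we may assume $\pg(A)\ge 2$, and the task is to realize every $c$ with $1\le c\le\pg(A)-1$. The engine of the argument is the identity, valid under the hypothesis, that for any integrally closed $\m$-primary ideal $I$ represented by a cycle $Z$ on a resolution $X$,
\[
q(I)=h^1(\cO_X(-Z))=h^1(\cO_C)=h(Z^{\bot}),
\]
where $C$ is the cohomological cycle on $Z^{\bot}$: indeed $n_0(Z)\le n_0(A)=1$, so when $n_0(Z)=1$ this is part~(3) of \lemref{l:nZ} with $n=1$, and when $n_0(Z)=0$ the cycle $Z$ is a $\pg$-cycle and every displayed term equals $\pg(A)=h(Z^{\bot})$. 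Equivalently, letting $Y$ be the normalized blow-up of $I$, the exact sequence \eqref{eq:Y} together with $n_0(A)=1$ gives $q(I)=\sum_{y\in\sing Y}\pg(Y,y)$. Hence it suffices to realize each $c$ as $h(Z^{\bot})$ for a representing cycle $Z$, equivalently as $\sum_{y}\pg(Y,y)$ for a normal partial resolution $Y\to\spec A$.

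Producing such ideals from configurations is straightforward: given a connected reduced configuration $F\subseteq E$ on a resolution $X$, the negative definite cycle $F$ can be contracted, giving a normal modification $Y\to\spec A$ that is projective and an isomorphism over the punctured spectrum, hence the blow-up of an $\m$-primary ideal $I$; then $I\cO_X=\cO_X(-Z)$ for a representing cycle $Z$ whose orthogonal part $Z^{\bot}$ is exactly $F$ (the components contracted by $X\to Y$ are those on which $\cO_X(-Z)$ is trivial), so $q(I)=h(Z^{\bot})=h(F)$. Thus the problem reduces to the purely geometric statement that, on suitable resolutions of $\spec A$, every $c$ with $0\le c\le\pg(A)$ equals $h(F)$ for some connected reduced $F$.

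I would prove this by induction on $\pg(A)$, the cases $\pg(A)\le 1$ being immediate. A singularity $(Y,y)$ occurring at a point of a partial resolution of $\spec A$ still satisfies $n_0(Y,y)\le 1$ — a cycle on a resolution of $(Y,y)$ is also a cycle on a resolution of $\spec A$, and one verifies that adjoining the remaining exceptional curves does not postpone the stabilization of the cohomology of its powers — so the inductive hypothesis applies to each such $(Y,y)$. Granting a partial resolution $Y'\to\spec A$ with $\sum_{y}\pg(Y',y)=\pg(A)-1$, say with singular points of genera $c_1,\dots,c_s$ (so $c_j<\pg(A)$ and $\sum_j c_j=\pg(A)-1$), the inductive hypothesis gives that every integer in $\{0,\dots,c_j\}$ is realized over partial resolutions of $(Y',y_j)$; resolving the remaining points and combining these, every $\sum_j e_j$ with $0\le e_j\le c_j$ — that is, every integer of $\{0,\dots,\pg(A)-1\}$ — lies in $\Img_A(q)$, and together with $\pg(A)$ itself this yields the whole interval.

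The step I expect to be the genuine obstacle is exactly the one granted above: producing a partial resolution of total geometric genus $\pg(A)-1$, equivalently a connected reduced configuration $F$ on some resolution with $h(F)=\pg(A)-1$. This can fail in the absence of a hypothesis (for cones over curves of large genus the function $h$ may drop by more than one), so the argument must use $n_0(A)=1$ in an essential way. Concretely, starting from the cohomological cycle $C_X$ and blowing up a point of its support (replacing $C_X$ by $g^{*}C_X-E_0$ as in \proref{p:CC}), one must exhibit a proper subconfiguration carrying all but one unit of $h^1(\cO_{C_X})$; it is precisely the fact that $n_0(A)=1$ forces $\{h^1(\cO_X(-nW))\mid n\ge 0\}$ to consist of at most two values, for every cycle $W$ with $\cO_X(-W)$ without fixed component, that should prevent $h$ from dropping by two or more in such a step.
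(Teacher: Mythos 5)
Your proposal runs the argument top--down: you want to realize each value $c$ as $h(F)$ for a reduced configuration $F$ on some resolution, by induction on $p_g(A)$, and you yourself flag that the crux --- producing a configuration with $h(F)=p_g(A)-1$, i.e.\ making $h$ drop by \emph{exactly} one --- is left unproved. That is a genuine gap, and the mechanism you offer for closing it is not an argument: the fact that $n_0(A)=1$ forces the sequence $h^1(\cO_X(-nW))$, for a \emph{fixed} $W$, to take at most two values says nothing about how $h^1(\cO_C)$ varies as you shrink the configuration supporting the cohomological cycle $C$. A second soft spot is the inductive step: you assert $n_0(Y,y)\le 1$ for a singularity $(Y,y)$ of a partial resolution because ``a cycle on a resolution of $(Y,y)$ is also a cycle on a resolution of $\spec A$,'' but a cycle supported on the exceptional set over $y$ is in general not anti-nef on all of $E$, so $\cO(-W)$ having no fixed component over $(Y,y)$ does not make it fixed-component-free over $\spec A$; the inequality $n_0(Y,y)\le n_0(A)$ would need a real proof (compare the care taken in \lemref{l:h1} to handle exactly this kind of comparison in the elliptic case).

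The paper's proof avoids both issues by going bottom--up. Start from a cycle $Z$ with $\cO_X(-Z)$ globally generated and $q(I_Z)=0$, take a general $h\in I_Z$, and repeatedly blow up a point of $\supp(C_{X_{i-1}})$ on the strict transform of $\di_{\spec A}(h)$, setting $Z_i=\phi_i^*Z_{i-1}+F_i$. The exact sequence $0\to\cO_{X_i}(-Z_i)\to\cO_{X_i}(-\phi_i^*Z_{i-1})\to\cO_{F_i}\to 0$ shows that $h^1(\cO_{X_i}(-Z_i))$ increases by $0$ or $1$ at each step (the increment is bounded by $h^0(\cO_{F_i})=1$), and by \proref{p:CC} and \proref{p:CX} the process terminates at a $p_g$-cycle; hence every intermediate value $k$ is attained by some $Z_{i_k}$. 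The hypothesis $n_0(A)=1$ enters only at the very end, and for a different purpose than the one you envisage: the $Z_{i_k}$ need not be globally generated, but $n_0(Z_{i_k})\le 1$ guarantees both that $h^1(\cO(-2Z_{i_k}))=h^1(\cO(-Z_{i_k}))$ and that $\cO(-2Z_{i_k})$ is globally generated (\lemref{l:nZ}(4)), so $I_{2Z_{i_k}}$ is an honest integrally closed $\m$-primary ideal with $q=k$. To salvage your route you would have to prove the ``drop by one'' statement directly, and I do not see how to do that without essentially reconstructing this blow-up chain.
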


%\marginpar{p. 7  in the middle, 2 lines after the beginning of the proof of Corollary 
%3.9.}

\begin{proof}
Let $Z>0$ be a cycle on $X$ such that $\cO_X(-Z)$ is generated by global sections and $q(I_Z)=0$ (e.g. \cite[4.5]{OWYgood}).
Take a general element $h\in I_Z$ (see \assref{ass:NC}) and $H:=\di _{\spec A}(h)$.
Let $X_0=X$ and let $\phi_i\: X_i \to X_{i-1}$ be the blowing-up at a point in the intersection of $\supp(C_{X_{i-1}})$ and the strict transform of $H$ on $X_{i-1}$. Let $F_i$ denote the exceptional set of $\phi_i$ and $Z_i:=\phi_i^*Z_{i-1}+F_i$, where $Z_0=Z$. By \proref{p:CC} and \proref{p:CX}, the sequence of blowing-ups $\{\phi_i\}$ ends in a finite number of steps. If $\phi_n$ is the last one, then $Z_n$ is a $p_g$-cycle.
From the exact sequence
\[
0\to \cO_{X_{i}}(-Z_i) \to \cO_{X_{i}}(-\phi_i^*Z_{i-1}) \to \cO_{F_i} \to 0,
\]
we obtain that
\[
0\le h^1(\cO_{X_{i}}(-Z_i)) - h^1(\cO_{X_{i-1}}(-Z_{i-1})) \le 1.
\]
Therefore, there exists a sequence $\{i_0, \dots, i_{p_g(A)}\}\subset \{0,1,\dots, n\}$ such that $h^1(\cO_{X_{i_k}}(-Z_{i_k}))=k$.
By the definition of the cycle $Z_i$, $\cO_{X_{i}}(-Z_{i})$  has no fixed component. Therefore, for each $i$,  $h^1(\cO_{X_{i}}(-nZ_{i}))$ is stable for $n\ge 1$ since $n_0(Z_i)\le 1$.
By \lemref{l:nZ} (4), $\cO_{X_{i_k}}(-2Z_{i_k})$ is generated by global sections and thus $q(I_{2Z_{i_k}})=k$
by the proof of \thmref{t:ellr}.
\end{proof}

\lemref{l:h1} and \proref{p:br=>Imgq} implies the following.
%\thmref{{t:ellr} and \proref{p:br=>Imgq} implies the following.
\begin{cor}\label{c:P}
If $A$ is an elliptic singularity, then 
\begin{equation}\label{eq:P=}
\Img_A (q)=\{0,1,\dots, p_g(A)\}.
\end{equation}
\end{cor}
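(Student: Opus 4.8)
\textbf{Proof proposal for Corollary \ref{c:P}.}
The plan is to combine the two ingredients indicated in the hint: \lemref{l:h1}, which controls $n_0$ for cycles orthogonal to $E_{min}$, and \proref{p:br=>Imgq}, which says that $n_0(A)=1$ already forces the conclusion $\Img_A(q)=\{0,1,\dots,p_g(A)\}$. Since the inclusion $\Img_A(q)\subset\{0,1,\dots,p_g(A)\}$ is free from \proref{p:le-cycle}, the whole task reduces to verifying the hypothesis $n_0(A)=1$ for an elliptic singularity, i.e. that $n_0(W)\le 1$ for \emph{every} cycle $W$ (on any resolution $Y$ of $\spec A$) with $\cO_Y(-W)$ having no fixed component. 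The subtlety is that \lemref{l:h1} only handles the case $WE_{min}=0$, so the argument must split according to the sign of $WE_{min}$.

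First I would dispose of the case $WE_{min}>0$. Here I claim $n_0(W)=0$: by \lemref{l:nZ}(3) one has $h^1(\cO_Y(-n_0W))=h^1(\cO_C)$ where $C$ is the cohomological cycle on $W^{\bot}$; but if $WE_{min}>0$ then $E_{min}$ is not contained in $W^{\bot}$, and by \proref{p:Eexists}(2) (together with \proref{p:CC}, since a cohomological cycle on $W^\bot$ supported on a reduced cycle meeting $E_{min}$ in at most one point must actually avoid $\supp(E_{min})$) the cohomological cycle $C$ has no common component with $E_{min}$. Then \proref{p:CC} gives $h^1(\cO_C)=0$, so $W$ is already a cycle with $h^1(\cO_Y(-W))=0=q$ stable, hence $n_0(W)=0$. (More directly: $W$ nef with $WE_{min}>0$ plus \lemref{l:rohr} kills $H^1(\cO_Y(-W))$ after noting $\cO_Y(-W)$ is globally generated up to passing to a model where $W$ is anti-nef; I would state this carefully.) The case $WE_{min}=0$ is exactly \lemref{l:h1}, giving $n_0(W)\le 1$ directly once we take $B$ to be the maximal reduced connected cycle with $WB=0$ containing $\supp(E_{min})$, provided $\supp(E_{min})$ actually lies in the support of the resolution — which it does, as $E_{min}$ is exceptional on any resolution.

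Combining the two cases yields $n_0(W)\le 1$ for all admissible $W$, hence $n_0(A)=1$ (it is not $0$, since $p_g>0$ forces the existence of cycles with $q<p_g$ and the blow-up procedure in \proref{p:br=>Imgq} produces intermediate cycles whose $n_0$ can equal $1$; in any case $n_0(A)\le 1$ suffices to invoke \proref{p:br=>Imgq} after checking $n_0(A)\ge 1$ separately, or one simply observes that if $n_0(A)=0$ then $A$ is rational, contradicting $p_g>0$). Then \proref{p:br=>Imgq} delivers \eqref{eq:P=}. The main obstacle I anticipate is the bookkeeping in the case $WE_{min}>0$: one must be careful that the model $Y$ on which $W$ lives need not be the minimal resolution, so $E_{min}$ on $Y$ is the total transform of the minimally elliptic cycle and its intersection behavior must be tracked under blow-ups; the clean way around this is to reduce to a resolution dominating the minimal one and use that $h^1(\cO_Y(-nW))$ is a birational invariant of the ideal $I_{nW}$, which is already recorded in the remark following \lemref{l:nZ}. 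Everything else is a direct citation of the quoted propositions.
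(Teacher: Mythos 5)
Your overall strategy is exactly the paper's: the Corollary is obtained by combining \lemref{l:h1} with \proref{p:br=>Imgq}, and the only content to supply is that $n_0(W)\le 1$ for every cycle $W$ with $\cO_Y(-W)$ having no fixed component. Your treatment of the case $WE_{min}=0$ via \lemref{l:h1} is the intended one.

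Two points in your other case need repair. First, the sign: since $\cO_Y(-W)$ has no fixed component, $W=(h)_E$ for some $h$, so $W$ is \emph{anti}-nef and $WE_{min}\le 0$ automatically; the case you must handle is $WE_{min}<0$, not $WE_{min}>0$ (which never occurs). The correct application of \lemref{l:rohr} is to the nef divisor $D=-nW$, which satisfies $DE_{min}>0$ for all $n\ge 1$, giving $H^1(\cO_Y(-nW))=0$ for all $n\ge 1$ and hence $n_0(W)\le 1$ at once — your parenthetical ``more direct'' route is in fact the one that works. Second, your primary argument in that case both asserts something false and is circular: $n_0(W)=0$ cannot hold when $p_g(A)>0$ and $h^1(\cO_Y(-W))=0$, because $h^1(\cL(0))=p_g(A)>0=h^1(\cL(1))$ forces $n_0(W)=1$; and deducing ``$h^1(\cO_Y(-W))=0$'' from ``$h^1(\cL(n_0))=h^1(\cO_C)=0$'' presupposes $n_0\le 1$, which is what you are trying to prove. (Also, \proref{p:Eexists}(2) applies only to reduced connected cycles containing \emph{no} component of $E_{min}$; a connected component of $W^{\bot}$ could contain some but not all of them, so the cleaner route to $h(W^{\bot})=0$ is via \proref{p:Eexists}(1) and Artin's criterion — though none of this is needed once you use R\"ohr.) With the R\"ohr argument substituted for that branch, and the observation that $n_0(A)=0$ would force $A$ rational, your proof is complete and matches the paper's.
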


%\marginpar{p.7 in the middle}

\begin{rem}
Assume that $A$ is an elliptic singularity and $Z>0$ is a $p_g$-cycle.
Let $B$ be the maximal reduced connected cycle such that $ZB=0$ and $\supp (E_{min})\subset B$ and let 
$\{Z_{B_0}, \dots , Z_{B_m}\}$ be the elliptic sequence on $B_0=B$.
Let $Z_{B_{-1}}=Z$ and $D_t=\sum_{i=-1}^t Z_{B_i}$. Then it follows from \lemref{l:h1} that 
$h^1(\cO_X(-D_{i-1}))=h^1(B_i)$ for $0\le i\le m$. 
Therefore $\Img_A(q)=\defset{h^1(B_i)}{i=0,1,\dots, m}\cup \{0\}$.
\end{rem}

The property \eqref{eq:P=} does not imply that $A$ is an elliptic singularity. In fact, we have the following.

\begin{ex}[cf. {\cite[Example 4.6]{OWYgood}}]
\label{ex:highpg} 
Let $C$ be a nonsingular curve of genus $g= 2$ and put
\[
R=\bigoplus_{n\ge 0}H^0(\cO_C(nK_C)).
\]
Suppose that $A$ is the localization of $R$ at $R_+=\bigoplus_{n\ge 1}H^0(\cO_C(nK_C))$ and let $f\: X\to \spec A$ be the minimal resolution.
Then $p_g(A)=3$, $E\cong C$, $\cO_E(-E)\cong \cO_E(K_E)$, $-E^2=2$,  $K_X=-2E=-C_X$, and $\cO_X(-E)$ is generated by global sections.
In particular, $\m=I_E$. It follows that $H^1(\cO_X(-2E))=0$ by the Grauert-Riemenschneider vanishing theorem.

We show that $\Img_A (q)=\{0,1,2,3\}$.
From the exact sequence 
\[
0 \to \cO_X(-E) \to \cO_X \to \cO_E \to 0,
\]
we have $h^1(\cO_X(-E))=p_g(A)-2=1$. Hence $1=q(\m)\in \Img_A (q)$.
Let $h\in \m$ be a general element and suppose $\di _X(h)=E+H_1+H_2$.
Let $\phi\: X'\to X$ be the blowing-up at $E\cap(H_1\cup H_2)$,
 and let $E_i=\phi^{-1}(E\cap H_i)$ and $Z=\phi^*E+E_1+E_2$.
If $E_0$ denote the strict transform of $E$, then $\cO_{E_0}(-Z)\cong \cO_{E_0}$ (cf. \remref{r:triv-bot}), and hence $h^1(\cO_{X'}(-nZ))\ge h^1(\cO_{E_0})=2$ for $n\ge 1$.
Since $C_{X'}=\phi^*(2E)-E_1-E_2$ by \proref{p:CC}, we have $ZC_{X'}=-2$.
By \proref{p:CX}, $h^1(\cO_{X'}(-nZ))\ne 3$. Hence $h^1(\cO_{X'}(-nZ))=2$ for $n\ge 1$.
By \lemref{l:nZ} (4), $\cO_{X'}(-2Z)$ is generated by global sections and $2=q(I_{2Z})\in \Img_A (q)$.
\end{ex}

\begin{prob}
For any normal surface singularity $(A,\m)$, does the equality $\Img_A (q)=\{0,1,\dots, p_g(A)\}$ holds?
\end{prob}

\section{When is the maximal ideal a $p_g$-ideal?}

%\marginpar{p.9 at the bottom}

From \exref{ex:highpg}, we see that in general the maximal ideal is not a $p_g$-ideal.
It is natural to ask for a characterization of normal surface singularities $(A,\m)$ with $q(\m)=p_g(A)$.
In \cite[Example 4.3]{OWYrees}, it is shown that for a complete Gorenstein local ring $A$ with $p_g(A) > 0$, $\m$ is a $p_g$-ideal if and only if $A\cong k[[x,y,z]]/(x^2+g(y,z))$, where $k$ is the residue field of $A$ and $g\in (y,z)^3\setminus (y,z)^4$. 
In this section, we give a geometric characterization of such singularities. So we work on the resolution space.
We assume that $p_g(A)>0$.

%\marginpar{p.8 lines 9-10 below Definition 4.1.}

Let us recall that for a function $h \in \m$, which has zero of order $a_i$ at
$E_i$, $(h)_E$ denotes a cycle such that $(h)_E=\sum a_iE_i$.

\begin{defn}
The {\em maximal ideal cycle} on $X$ is the minimum of $\defset{(h)_E}{h \in \m}$. 
\end{defn}

A cycle $M>0$ on $X$ is the  maximal ideal cycle if and only if $\cO_X(-M)$ has no fixed component and $\m=H^0(X,\cO_X(-M))$.

\begin{lem}\label{l:paM}
Let $M$ be the maximal ideal cycle on $X$. Then 
$\m$ is a $p_g$-ideal represented by $M$ if and only if $p_a(M)=0$.
\end{lem}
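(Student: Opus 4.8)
The plan is to relate the condition ``$\m$ is a $p_g$-ideal represented by $M$'' to a statement about the structure sheaf of $M$ and its cohomology, and then to identify $p_a(M)=0$ as the combinatorial translation. Recall that if $\m=I_M$ then by definition $\m$ is a $p_g$-ideal exactly when $q(\m)=h^1(\cO_X(-M))=p_g(A)$. First I would write down the exact sequence
\[
0 \to \cO_X(-M) \to \cO_X \to \cO_M \to 0,
\]
which gives the long exact sequence
\[
0 \to H^0(\cO_X(-M)) \to H^0(\cO_X) \to H^0(\cO_M) \to H^1(\cO_X(-M)) \to H^1(\cO_X) \to H^1(\cO_M) \to 0.
\]
Since $M>0$, the map $H^0(\cO_X)=A \to H^0(\cO_M)$ has image the constants $k$, and $H^0(\cO_M)$ has $k$-dimension $1-\chi(\cO_M)+h^1(\cO_M) = p_a(M)+h^1(\cO_M)$ — wait, more precisely $h^0(\cO_M)-h^1(\cO_M)=\chi(\cO_M)=1-p_a(M)$, so $h^0(\cO_M)=1-p_a(M)+h^1(\cO_M)$. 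Feeding this in, the connecting map $H^0(\cO_M)\to H^1(\cO_X(-M))$ has image of dimension $h^0(\cO_M)-1 = h^1(\cO_M)-p_a(M)$, and then $h^1(\cO_X(-M)) = (h^1(\cO_M)-p_a(M)) + \dim(\ker(H^1(\cO_X)\to H^1(\cO_M)))$.

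Next I would use the fact that $M$ is the maximal ideal cycle, so $\cO_X(-M)$ has no fixed component, together with \proref{p:le-cycle}, which gives $h^1(\cO_X(-M))\le p_g(A)$ always. So the task is to show $h^1(\cO_X(-M))=p_g(A)$ iff $p_a(M)=0$. From the displayed formula, $h^1(\cO_X(-M)) = p_g(A)$ forces both $p_a(M)\le h^1(\cO_M)$ contributions to work out; the cleanest route is: the map $H^1(\cO_X)\to H^1(\cO_M)$ — I claim it is injective precisely when things are tight. Actually a more robust approach: note $h^1(\cO_X(-M)) = p_g(A) - \dim(\text{image of } H^1(\cO_X)\to H^1(\cO_M))\cdot(\text{something})$... let me instead argue via $\chi$. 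We have $\ell_A(A/\m) = 1$, and Kato's Riemann–Roch (stated in the excerpt) gives $\ell_A(A/\m)+q(\m) = -(M^2+K_XM)/2 + p_g(A)$, i.e. $q(\m) = p_g(A) - 1 - (M^2+K_XM)/2 = p_g(A) - \chi(\cO_M) = p_g(A) - 1 + p_a(M)$. Therefore $q(\m)=p_g(A)$ iff $p_a(M)=1$?? That contradicts the claim, so I must be mis-signing $\chi$: $p_a(M)=1-\chi(\cO_M)$ means $\chi(\cO_M) = 1-p_a(M)$, and $-(M^2+K_XM)/2 = \chi(\cO_M)$, so $q(\m) = p_g(A) - 1 + \chi(\cO_M) = p_g(A) - 1 + 1 - p_a(M) = p_g(A) - p_a(M)$. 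Hence $q(\m)=p_g(A)$ if and only if $p_a(M)=0$. That is exactly the statement, and it requires only that $\m$ is genuinely represented by $M$, i.e. $\m\cO_X=\cO_X(-M)$, which by the remark just before the lemma is equivalent to $\cO_X(-M)$ having no fixed component and $\m=H^0(\cO_X(-M))$ — automatic for the maximal ideal cycle once $\cO_X(-M)$ is globally generated. I would handle the case where $\cO_X(-M)$ is not yet globally generated by passing to a resolution where it is, noting $p_a(M)$ and $q(\m)$ are unchanged appropriately, or simply invoke that $p_a(M)=0 \Rightarrow \cO_X(-M)$ globally generated via \proref{p:le-cycle} (equality case).

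So the proof splits into: (a) the Riemann–Roch computation giving $q(\m) = p_g(A) - p_a(M)$ whenever $\m$ is represented by $M$; (b) the equivalence of ``$\m$ represented by $M$'' with $\cO_X(-M)$ being globally generated, and the observation that if $p_a(M)=0$ then by the equality case of \proref{p:le-cycle} the sheaf $\cO_X(-M)$ is globally generated, closing the loop in the direction where we do not yet know $\m$ is represented by $M$. I expect step (b) — making sure the ``represented by'' hypothesis is not circular and that $p_a(M)=0$ alone suffices to guarantee $\m = H^0(\cO_X(-M))$ with no fixed component — to be the main subtlety; the Riemann–Roch bookkeeping in (a) is routine once the signs are pinned down. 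One clean way to phrase (b): $M$ being the maximal ideal cycle means $\cO_X(-M)$ has no fixed component and $H^0(\cO_X(-M))=\m$ by the characterization stated before the lemma, so $\m\cO_X \subseteq \cO_X(-M)$ with equality iff $\cO_X(-M)$ is globally generated; and $p_a(M)=0$ together with $h^1(\cO_M)=0$ (which follows from $\chi(\cO_M)=1$ and $h^0(\cO_M)=1$ since $M>0$) forces $h^1(\cO_X(-M))=p_g(A)$, whence global generation by \proref{p:le-cycle}.
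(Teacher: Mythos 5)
Your argument is correct and is essentially the paper's proof: the paper likewise uses the exact sequence $0 \to \cO_X(-M) \to \cO_X \to \cO_M \to 0$ together with $H^0(\cO_X(-M))=\m$ to obtain $p_a(M)=p_g(A)-h^1(\cO_X(-M))$, and then applies the equality case of \proref{p:le-cycle} (no fixed component, hence global generation when $h^1$ attains $p_g(A)$) to conclude that $\m$ is genuinely represented by $M$, exactly as in your step (b). The only cosmetic remarks are that the detour through Kato's Riemann--Roch is logically superfluous (and slightly circular, since that formula is stated for ideals already represented by a cycle), and that the side claim $h^0(\cO_M)=1$ ``since $M>0$'' is neither justified for an arbitrary cycle nor needed: the alternating sum in the long exact sequence yields $h^1(\cO_X(-M))=p_g(A)-p_a(M)$ using only $\chi(\cO_M)$, not $h^0(\cO_M)$ and $h^1(\cO_M)$ separately.
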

\begin{proof}
From the exact sequence 
$$
0 \to \cO_Y(-M)\to \cO_Y \to \cO_M \to 0,
$$
we have $p_a(M)=p_g(A)-h^1(\cO_X(-M))$. 
Since $\cO_X(-M)$ has no fixed component, the assertion follows from \proref{p:le-cycle}.
\end{proof}

The following theorem is proved by Tomari (see \cite[Corollary 3.12 and Theorem 4.3]{tomari.ell}).
Let us give a proof from our point of view.

\begin{thm}[Tomari]\label{t:mpg}
Let $M$ be the maximal ideal cycle on $X$ and $f'\: X'\to \spec A$ be the blowing-up by $\m$.
Then $p_a(M)=0$ if and only if the following three conditions are satisfied.
\begin{enumerate}
\item $\emb A=\mult A+1$.
\item $X'$ is normal.
\item $\cO_X(-M)$ is generated by global sections.
\end{enumerate}
\end{thm}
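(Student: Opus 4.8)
The plan is to translate the numerical condition $p_a(M)=0$ into statements about the fibre of the blowing-up $f'\colon X'\to\spec A$ and its normalization, and conversely. First I would recall the basic dictionary: if $M$ is the maximal ideal cycle on $X$ and $\cO_X(-M)$ is generated by global sections, then $X'$ is dominated by $X$ via a morphism $\pi\colon X\to X'$ with $\pi_*\cO_X=\cO_{X'}$ (the latter because $A$ is normal), and the reduced fibre over the closed point pushes $M$ forward to the fundamental cycle of the blow-up; in particular the analytic local data of $X'$ at its points is computed from cycles supported on $\supp(M)$ that are contracted by $\pi$. The quantity $\emb A$ equals $\dim_k \m/\m^2$, and $\mult A = -M^2$ under (3) (this is the standard formula $e(\m)=-(M)^2$ for the multiplicity of an $\m$-primary integrally closed ideal whose representing cycle is $M$, valid since $\cO_X(-M)$ has no fixed component). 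So the identity $\emb A=\mult A+1$ is the statement $\dim_k\m/\m^2 = -M^2+1$, i.e. that the number of generators of $\m$ exceeds the multiplicity by exactly $1$, which is precisely the condition that $X'=\Proj\bigoplus \m^n$ has a hyperplane-section description forcing its fibre to be (arithmetic genus zero) — this is where $p_a(M)=0$ enters.

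For the direction $p_a(M)=0\Rightarrow$ (1),(2),(3): Suppose $p_a(M)=0$. By \lemref{l:paM}, $\m=I_M$ is a $p_g$-ideal represented by $M$; in particular $\cO_X(-M)$ is generated by global sections, giving (3). Since $\m$ is a $p_g$-ideal, \proref{p:pgC} says the Rees algebra $\bigoplus_{n\ge0}\m^n t^n$ is a Cohen--Macaulay normal domain, hence $X'=\Proj$ of this algebra is normal, giving (2). For (1): normality of the Rees algebra together with $p_a(M)=0$ means the Hilbert function of $A$ with respect to $\m$ is as small as possible; concretely, from the exact sequences $0\to\cO_X(-(n+1)M)\to\cO_X(-nM)\to\cO_{M}(-nM)\to 0$ and the fact (via \proref{p:CX}, since $\cO_{C_X}(-M)\cong\cO_{C_X}$) that all higher cohomology is controlled, one computes $\dim_k \m^n/\m^{n+1}$ exactly and reads off $\dim_k\m/\m^2 = -M^2+1=\mult A+1$. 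The inequality $\emb A\le \mult A+1$ is Abhyankar's bound; equality here is forced by $p_a(M)=0$, i.e. by $h^1(\cO_M)=0$ on top of $\chi(\cO_M)=1-p_a(M)=1$.

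For the converse, assume (1),(2),(3). By (3), $M$ is the maximal ideal cycle represented on $X$, and $\pi\colon X\to X'$ exists with $\pi_*\cO_X=\cO_{X'}$; by (2), $X'$ is normal, so $R^1f'_*\cO_{X'}$ has length $p_g(A)-h^1(\cO_X(-M))=p_a(M)$ — wait, more precisely the Leray sequence for $\pi$ together with $X'$ normal shows $h^1(\cO_X(-M))$ accounts for $p_g$ of the normal surface $X'$ plus the contribution of $p_a(M)$; and the point of (1) is that Abhyankar equality $\emb A=\mult A+1$ forces the tangent cone, hence the exceptional fibre of $X'\to\spec A$, to be arithmetically Cohen--Macaulay of the expected (minimal) Hilbert polynomial, which numerically forces $p_a(M)=0$. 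I expect the main obstacle to be this last implication: carefully justifying that the three hypotheses pin down the Hilbert function of $\gr_\m(A)$ tightly enough to conclude $\chi(\cO_M)=1$, i.e. extracting $p_a(M)=0$ from the combination of Abhyankar equality, normality of the blow-up, and global generation. The clean way is probably to use Kato's Riemann--Roch formula $\ell_A(A/\m)+q(\m)=-(M^2+K_XM)/2+p_g(A)$ together with the adjunction expression $p_a(M)=1+(M^2+K_XM)/2$, reducing the whole equivalence to the single identity $\ell_A(A/\m)=1$ combined with $q(\m)=p_g(A)$ — and then (1) is exactly the translation of the multiplicity/embedding dimension bookkeeping into $-M^2$, while (2),(3) are as above; I would set it up so that the numerical heart of the matter is this one Riemann--Roch comparison.
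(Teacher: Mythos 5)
Your forward direction matches the paper's: \lemref{l:paM} gives (3) together with the fact that $\m$ is a $p_g$-ideal represented by $M$, \proref{p:pgC} gives (2) via normality of the Rees algebra, and (1) is the colength/multiplicity bookkeeping for $p_g$-ideals (the paper simply cites \cite[6.2]{OWYgood} here; your Hilbert-function sketch is in the right spirit, though you would still need to justify $\ol{\m^n}=\m^n$ and the vanishing of $h^1(\cO_M(-M))$ to carry it out).

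The converse, however, has a genuine gap, and your proposed fix does not close it. Reducing via Kato's Riemann--Roch to the pair of identities $\ell_A(A/\m)=1$ and $q(\m)=p_g(A)$ is not a reduction at all: $\ell_A(A/\m)=1$ is automatic, and modulo (3) the equality $q(\m)=p_g(A)$ is \emph{equivalent} to $p_a(M)=0$ (this is exactly \lemref{l:paM}), so the entire content of the converse is to produce $h^1(\cO_X(-M))=p_g(A)$ from (1)--(3), and you supply no mechanism for that. The paper's mechanism is: condition (1) plus Goto--Shimoda implies $\gr_{\m}(A)$ is Cohen--Macaulay with negative $a$-invariant, which by Tomari--Watanabe \cite[(1.18)]{tki-w} yields $h^1(\cO_{X'})=0$; by (2) and (3) the morphism $\phi\colon X\to X'$ contracts exactly $M^{\bot}$, so the Leray sequence gives $p_g(A)=h^1(\cO_{X'})+\ell_A(R^1\phi_*\cO_X)=h(M^{\bot})$; and finally the inequality $h(M^{\bot})\le h^1(\cO_X(-M))$ from \lemref{l:nZ} (1),(3) forces $h^1(\cO_X(-M))=p_g(A)$. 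Your phrase about Abhyankar equality forcing the tangent cone to be arithmetically Cohen--Macaulay gestures at Goto--Shimoda, but you never use it to kill $h^1(\cO_{X'})$, and you never invoke the comparison between the cohomological cycle on $M^{\bot}$ and $h^1(\cO_X(-M))$, which is the step that converts vanishing on the blow-up into the desired statement about $q(\m)$.
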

\begin{proof}
Assume that $p_a(M)=0$.
By \lemref{l:paM}, $\m$ is a $p_g$-ideal and $\cO_X(-M)$ is generated by global sections.
By \cite[6.2]{OWYgood}, (1) holds.
\proref{p:pgC} implies (2).

Conversely assume that the conditions (1)--(3) are satisfied.
By (1) and Goto--Shimoda \cite[1.1 and 1.4]{Goto-Shimoda}, $G:=\bigoplus_{n\ge 0}\m^n/\m^{n+1}$ is a Cohen-Macaulay ring with $a(G)<0$, where $a(G)$ denote the $a$-invariant of Goto--Watanabe \cite{G-W}.
Then $h^1(\cO_{X'})=0$ by \cite[(1.18)]{tki-w}.
By (2) and (3), $X'$ is obtained by contracting the cycle $M^{\bot}$ on $X$, and there exists the following exact sequence:
\[
0\to H^1(\cO_{X'}) \to H^1(\cO_X) \to H^0(R^1\phi_*\cO_X) \to 0.
\]
This shows that $p_g(A)=\ell_A(R^1\phi_*\cO_X)=h(M^{\bot})$. 
Since $h(M^{\bot})\le h^1(\cO_X(-M))$ by \lemref{l:nZ}, we obtain $h^1(\cO_X(-M))=p_g(A)$.
\end{proof}

\begin{cor}\label{c:Gpg}
If $A$ is Gorenstein and $\m$ is a $p_g$-ideal, then $\mult A=2$.
\end{cor}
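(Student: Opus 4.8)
The plan is to combine \thmref{t:mpg} with the observation that the hypotheses force the singularity to be, in Tomari's terminology, minimally elliptic of degree one, and then read off the multiplicity. Concretely: if $A$ is Gorenstein with $p_g(A)>0$ and $\m$ is a $p_g$-ideal, then $A$ is not rational, so by Theorem \ref{t:int}(2) (i.e.\ \thmref{t:mpg} together with the Gorenstein hypothesis) $A$ is a maximally elliptic singularity with $-Z_E^2=1$, where $Z_E$ is the fundamental cycle on the minimal resolution. For such a singularity the multiplicity is known to equal $\max\{2,-Z_E^2\}$, hence $\mult A = 2$. Since the excerpt develops Theorem \ref{t:int}(2) precisely in this section, I would phrase the corollary's proof so that it follows from that theorem rather than re-deriving it.

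Alternatively, and perhaps more in the spirit of keeping the proof self-contained within the tools just established, I would argue directly from \thmref{t:mpg}. Since $\m$ is a $p_g$-ideal and $p_g(A)>0$, Lemma \ref{l:paM} gives $p_a(M)=0$ for the maximal ideal cycle $M$, so condition (1) of \thmref{t:mpg} yields $\emb A = \mult A + 1$. Now invoke the classical fact (Sally) that a Cohen--Macaulay local ring with $\emb A = \mult A + 1$ has minimal multiplicity only in a restricted sense; more useful here is that a \emph{Gorenstein} local ring of embedding dimension $e+1$ and multiplicity $e$ is a hypersurface, and a two-dimensional Gorenstein singularity that is a hypersurface in $\mathbb{A}^3$ has $\emb A \le 3$, hence $\mult A \le 2$; since $p_g(A)>0$ forbids a regular or rational-double-point situation entirely, $\mult A = 2$. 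I would cite the description recalled in the section's opening paragraph, $A\cong k[[x,y,z]]/(x^2+g(y,z))$ with $g\in(y,z)^3\setminus(y,z)^4$, from which $\emb A = 3$ and $\mult A = 2$ are immediate.

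The cleanest writeup, which is what I would actually put in the paper, is: by \lemref{l:paM} and \thmref{t:mpg}, $\emb A = \mult A + 1$; a Gorenstein local ring with this property is a complete intersection (indeed a hypersurface), so $\emb A - \dim A = \emb A - 2$ equals the number of defining equations, which is $1$, giving $\emb A = 3$ and therefore $\mult A = \emb A - 1 = 2$.

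The main obstacle I anticipate is purely expository rather than mathematical: deciding how much of the classical structure theory (Gorenstein $+$ $\emb A = \mult A + 1$ $\Rightarrow$ hypersurface) to cite versus reprove, and making sure the case $p_g(A)>0$ is invoked to exclude the degenerate possibilities (regular local ring, or $\mult A = 1$). Since the paper has already recalled the explicit hypersurface form $x^2 + g(y,z)$ for Gorenstein singularities whose maximal ideal is a $p_g$-ideal, I would simply point to that, making the proof a one-line corollary.
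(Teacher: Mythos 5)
Your ``cleanest writeup'' is essentially the paper's own proof: Lemma~\ref{l:paM} and Theorem~\ref{t:mpg} give $\emb A=\mult A+1$, and then the classical fact that a two-dimensional Gorenstein local ring with $\emb A=\mult A+1$ (i.e.\ minimal multiplicity) must have $\mult A=2$ --- the paper cites Sally \cite[3.1]{sally.tangent} for exactly this step. One caution: your first suggested route, deducing the corollary from Theorem~\ref{t:int}(2), would be circular, since the proof of that theorem (Theorem~\ref{t:mpg}'s companion at the end of the section) itself invokes Corollary~\ref{c:Gpg} to get $\mult A=-M^2=2$; so stick with the direct argument via Sally's theorem.
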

\begin{proof}
It follows from \lemref{l:paM} and \thmref{t:mpg} that
 $\emb A=\mult A+1$. Since $A$ is Gorenstein, $\mult A=2$ by \cite[3.1]{sally.tangent}.
\end{proof}

%\marginpar{p.4  lines 7 - 12 from top:}

\begin{rem}
If $\m$ is a $p_g$-ideal, then for any general element $h\in \m$, 
$\spec A/(h)$ is a partition curve (see \cite [\S 3] {b-c.rat}), because 
$\delta (A/(h))=\emb A/(h) -1$ by the formula of Morales \cite[2.1.4]{mo.rr}.
Note that if $\m$ is represented on a resolution $X$, the strict transform of $\di_{\spec A}(h)$ on $X$ is nonsingular by \assref{ass:NC}.
\end{rem}

%\marginpar{p.9 at the bottom}

\begin{defn}\label{d:G}
A normal surface singularity $A$ is said to be {\em numerically Gorenstein} if $Z_{K_X}\in \sum _i\Z E_i$. 
The definition is independent of the choice of the resolution.
\end{defn}

It is known that $(A,\m)$ is Gorenstein if and only if 
$(A, \m)$ is numerically Gorenstein
 and $-K_X \sim Z_{K_X}$.

\begin{defn}[Yau {\cite[\S 3]{yau.max}}]
Assume that $A$ is elliptic and numerically Gorenstein.
Let $Z_0\ge \dots \ge  Z_m$ be the elliptic sequence on $E$.
Then  $p_g(A)\le m+1$.
If $p_g(A)=m+1$, $A$ is called a {\em maximally elliptic} singularity.
\end{defn}

\begin{thm}[Yau {\cite[Theorem 3.11]{yau.max}}]\label{t:maxGor}
A maximally elliptic singularity is Gorenstein.
\end{thm}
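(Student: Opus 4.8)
The plan is to verify the standard criterion recalled just after \defref{d:G}: since $A$ is numerically Gorenstein, it is Gorenstein precisely when $-K_X\sim Z_{K_X}$ on the minimal resolution $f\colon X\to\spec A$, i.e.\ when $\cO_X(K_X+Z_{K_X})\cong\cO_X$; equivalently, the canonical module $\omega_A=H^0(X,\cO_X(K_X+Z_{K_X}))$ is principal. So I would first locate $Z_{K_X}$ inside the elliptic sequence. On the minimal resolution, adjunction gives $K_XE_i=2p_a(E_i)-2-E_i^2\ge 0$ for every $E_i$, so $K_X$ is nef and the canonical cycle $Z_{K_X}$ --- an honest effective cycle by numerical Gorensteinness --- is anti-nef. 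Its defining property yields $\chi(Z_{K_X})=-\tfrac12 Z_{K_X}(K_X+Z_{K_X})=0$, and, using connectedness of $E$ together with $(K_X+Z_{K_X})E_i=0$, one checks $\supp(Z_{K_X})=E$ and $Z_{K_X}>0$. Hence by Tomari's \proref{p:tomari-nef} (with $B=E$) we get $Z_{K_X}=C_k:=\sum_{i=0}^{k}Z_{B_i}$ for some $0\le k\le m$, where $\{Z_{B_0},\dots,Z_{B_m}\}$ is the elliptic sequence on $E$.

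The heart of the proof is to show that $k=m$ and that $\cO_{Z_{K_X}}(K_X+Z_{K_X})\cong\cO_{Z_{K_X}}$. I would filter $\omega_A=H^0(\cO_X(K_X+C_k))$ by the submodules $H^0(\cO_X(K_X+C_i))$, $i=-1,0,\dots,k$ (with $C_{-1}=0$), and use $p_g(A)=\dim_k\omega_A/H^0(\cO_X(K_X))$ to express $p_g(A)$ as the sum of the $k+1$ successive quotients. From $0\to\cO_X(K_X+C_{i-1})\to\cO_X(K_X+C_i)\to\cO_{Z_{B_i}}(K_X+C_i)\to 0$ the $i$th quotient injects into $H^0(Z_{B_i},\cO_{Z_{B_i}}(K_X+C_i))$; since $Z_{B_i}$ is an effective Cartier divisor on $X$ with $\chi(Z_{B_i})=0$, and $C_{i-1}E_j=0$ on the components of $B_i$, Serre duality on $Z_{B_i}$ identifies this space with $h^1(\cO_{Z_{B_i}}(K_X+C_i))=h^0(\cO_{Z_{B_i}}(Z_{B_i}-C_i))=h^0(\cO_{Z_{B_i}}(-C_{i-1}))$, which is at most $h^0(\cO_{Z_{B_i}})=1$. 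Therefore $p_g(A)\le k+1\le m+1$. Maximal ellipticity, $p_g(A)=m+1$, then forces $k=m$ and forces every one of these groups to be one-dimensional and spanned by a nowhere-vanishing section of $\cO_{Z_{B_i}}(K_X+C_i)$; an induction on $m$ --- peeling off $Z_{B_0}$ and applying the same analysis to the maximally elliptic singularity with elliptic sequence $\{Z_{B_1},\dots,Z_{B_m}\}$, the base case $m=0$ being Laufer's theorem that a minimally elliptic singularity is Gorenstein (so $\cO_{Z_E}(K_X+Z_E)\cong\cO_{Z_E}$) --- then assembles these trivializations into $\cO_{Z_{K_X}}(K_X+Z_{K_X})\cong\cO_{Z_{K_X}}$.

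To conclude, I would descend from the cycle $Z_{K_X}$ to all of $X$. From $0\to\cO_X(K_X)\to\cO_X(K_X+Z_{K_X})\to\cO_{Z_{K_X}}(K_X+Z_{K_X})\to 0$ together with the Grauert-Riemenschneider vanishing $H^1(X,\cO_X(K_X))=0$, a nowhere-vanishing section of $\cO_{Z_{K_X}}(K_X+Z_{K_X})$ lifts to a global section $\omega_0$ of $\cO_X(K_X+Z_{K_X})$. Its zero locus is a closed subset of $X$ disjoint from $E$ (since $\omega_0$ is nonzero everywhere on $\supp(Z_{K_X})=E$); because $f$ is proper with $f^{-1}(\m)=E$, the only such closed subset is empty. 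Hence $\omega_0$ is nowhere zero, $\cO_X(K_X+Z_{K_X})\cong\cO_X$, and $A$ is Gorenstein.

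The step I expect to be the main obstacle is the middle paragraph: the inequality $h^0(\cO_{Z_{B_i}}(-C_{i-1}))\le 1$ for the numerically trivial line bundle $\cO_{Z_{B_i}}(-C_{i-1})$ on the connected cycle $Z_{B_i}$, and --- above all --- the passage from the numerical and cohomological equalities forced by $p_g(A)=m+1$ to the genuine triviality of $\cO_{Z_{B_i}}(K_X+C_i)$, and then of $\cO_{Z_{K_X}}(K_X+Z_{K_X})$ after the inductive assembly. This is precisely where the hypothesis ``maximally elliptic'' is used essentially: a numerically Gorenstein elliptic singularity need not be Gorenstein, so \proref{p:tomari-nef} by itself cannot suffice. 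It is also the part that genuinely exploits the fine structure of the elliptic sequence, rather than merely the classification of anti-nef cycles with $\chi=0$.
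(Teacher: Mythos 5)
The paper does not prove this theorem; it is quoted from Yau \cite{yau.max}, so there is no in-paper argument to compare against. Your sketch is in fact a reconstruction of the skeleton of Yau's own proof, and its first two thirds are sound: on the minimal resolution $Z_{K_X}$ is an effective integral anti-nef cycle with $\chi(Z_{K_X})=0$ and full support, so \proref{p:tomari-nef} gives $Z_{K_X}=C_k$; the filtration of $\omega_A=H^0(\cO_X(K_X+Z_{K_X}))$ by the $H^0(\cO_X(K_X+C_i))$, together with $\chi(\cO_{Z_{B_i}}(K_X+C_i))=0$, Serre duality, and the bound $h^0\le 1$ for a numerically trivial line bundle on the $1$-connected cycle $Z_{B_i}$ (Reid \cite[3.11]{chap}, exactly as used in the proof of \lemref{l:maxell}), yields $p_g(A)\le k+1\le m+1$, and maximal ellipticity forces $k=m$ and every successive quotient to be one-dimensional.

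The gap is in the passage you yourself flag as the main obstacle, and as written it is not merely incomplete but aimed at a false intermediate statement. What the equality case actually gives is $\cO_{Z_{B_i}}(-C_{i-1})\cong\cO_{Z_{B_i}}$, i.e. $\cO_{Z_{B_i}}(K_X+C_i)\cong\omega_{Z_{B_i}}$; this sheaf is \emph{not} trivial for $i<m$, because $\omega_{Z_{B_i}}$ is not even numerically trivial on $Z_{B_i}$. For instance, in the chain configuration of \lemref{l:maxell} one has $(K_X+Z_{B_0})E_0=-1$ on the end $(-2)$-curve, so every section of $\cO_{Z_{B_0}}(K_X+C_0)$ vanishes somewhere; there are no ``nowhere-vanishing sections of $\cO_{Z_{B_i}}(K_X+C_i)$'' to assemble. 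The sheaf that must be shown trivial is $\cO_{C_m}(K_X+C_m)$, which is numerically trivial but lives on $C_m$, and $C_m$ is \emph{not} $1$-connected (indeed $Z_{B_i}Z_{B_j}=0$ for $i\ne j$), so Reid's criterion cannot be applied to it directly; this is precisely why the statement is not a formal consequence of numerical Gorensteinness, and why non-Gorenstein (even non-$\Q$-Gorenstein) numerically Gorenstein elliptic singularities exist, as in the paper's final example. Getting from the forced one-dimensionality of each graded piece to the triviality of $\cO_{C_m}(K_X+C_m)$ is the real content of Yau's proof. Separately, your induction step presupposes that the singularity obtained by contracting $B_1$ is again maximally elliptic with elliptic sequence $\{Z_{B_1},\dots,Z_{B_m}\}$; this is true but is itself one of the substantive steps of \cite{yau.max}, not something that follows from the filtration count alone. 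The base case via Laufer's theorem \cite{la.me} is a legitimate external input.
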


%\marginpar{p.9 at the bottom}

Let $Z_E$ be the fundamental cycle. 
The number $-Z_E^2>0$ is called the {\em degree} of $A$.
It is known that the degree is independent of the choice of the resolution.

%\marginpar{p.10 in the middle}

The following result (even more general results) can be recovered from 2.15, 3.10 and 5.10 of \cite{o.numGell} (cf. \cite{nem.ellip}). However we put a proof for readers' convenience.

\begin{lem}
\label{l:maxell}
Assume that $A$ is a numerically Gorenstein elliptic singularity and that $X\to \spec A$ is the minimal resolution.
Moreover, assume that $-Z_E^2=1$. Then we have the following.
\begin{enumerate}
\item Let $E_{min}$ be the minimally elliptic cycle. Then $E$ can be expressed as $E=\supp (E_{min}) \cup \left(\bigcup _{i=0}^{m-1}E_i\right)$ with the following dual graph:
\begin{center}
\begin{picture}(200,25)(60,10)
\put(65,15){{\framebox(60,20){$\supp (E_{min})$}}}
\put(150,25){\circle*{4}}
\put(225,25){\circle*{4}}
\put(125,25){\line(1,0){50}}
\put(205,25){\line(1,0){20}}
\put(150,35){\makebox(0,0){$-2$}}
\put(190,25){\makebox(0,0){$\cdots$}}
\put(225,35){\makebox(0,0){$-2$}}
\put(150,15){\makebox(0,0){$E_{m-1}$}}
\put(225,15){\makebox(0,0){$E_0$}}
\end{picture}
\end{center}
Note that $E_{min}E_{m-1}=1$ by \proref{p:Eexists} (2).

\item $A$ is Gorenstein 
and $Z_E$ coincides with the maximal ideal cycle
if and only if $A$ is a maximally elliptic singularity.
\end{enumerate}
\end{lem}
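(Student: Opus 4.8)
\emph{Overview and Part (1).} The plan is to extract a rigid resolution graph from $-Z_E^2=1$, and then to reduce (2) to cohomology of ideal sheaves of cycles via \sref{s:Pre}. For (1): since $Z_E$ is anti-nef with $Z_E\ge E$, every term of $1=-Z_E^2=\sum_i(\mult_{E_i}(Z_E))(-Z_EE_i)$ is a nonnegative integer with $\mult_{E_i}(Z_E)\ge1$, so there is a unique component, call it $E_{m-1}$, with $Z_EE_{m-1}=-1$ and $\mult_{E_{m-1}}(Z_E)=1$, while $Z_EE_j=0$ for every other $j$. Next I examine $E\setminus\supp(E_{min})$: by \proref{p:Eexists}(2) each connected component $F$ is the exceptional set of a rational singularity with $E_{min}F\le1$; writing $Z_E=W+V$ with $\supp W=F$, the vanishings $Z_EE_i=0$ on the components of $F$ other than a possible $E_{m-1}$ show $W$ is anti-nef on $F$, hence $W\ge Z_F$, and $\chi(Z_E)=0$ combined with the rationality of $F$ forces $W=Z_F$ with $p_a(Z_F)=0$; then minimality of the resolution (no $(-1)$-curves) together with these numerical constraints forces $F$ to be a chain of $(-2)$-curves attached to $\supp(E_{min})$ at a single curve. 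Uniqueness of $E_{m-1}$ makes $F$ unique and $E_{m-1}$ its only component meeting $\supp(E_{min})$, with $E_{min}E_{m-1}=1$ by \proref{p:Eexists}(2). Finally, computing the fundamental cycles of the subgraphs obtained by deleting the free end of the chain one step at a time, and comparing with \proref{p:tomari-nef}, identifies $m$ with one less than the length of the elliptic sequence on $E$.

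\emph{Part (2).} I would first make two reductions. (a) For $h\in\m$ the cycle $(h)_E$ is anti-nef and $\ge E$, hence $\ge Z_E$; since moreover $H^0(\cO_X(-Z_E))\subset\m$, the cycle $Z_E$ is the maximal ideal cycle if and only if $\cO_X(-Z_E)$ has no fixed component, and then $\m=H^0(\cO_X(-Z_E))$. (b) Since $h^0(\cO_{Z_E})=1$ and $p_a(Z_E)=1$ by ellipticity, the sequence $0\to\cO_X(-Z_E)\to\cO_X\to\cO_{Z_E}\to0$ gives $h^1(\cO_X(-Z_E))=p_g(A)-1$; hence $A$ is maximally elliptic, i.e.\ $p_g(A)=m+1$, exactly when $h^1(\cO_X(-Z_E))=m$. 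For ``maximally elliptic $\Rightarrow$'': $A$ is Gorenstein by \thmref{t:maxGor}, and $h^1(\cO_X(-Z_E))=m$; that $\cO_X(-Z_E)$ has no fixed component I would deduce from the part-(1) graph via the sequences $0\to\cO_X(-Z_E-E_i)\to\cO_X(-Z_E)\to\cO_{E_i}(-Z_E)\to0$ (with $-Z_EE_i\ge0$) together with the monotonicity in \proref{p:le-cycle}: a fixed component would push the maximal ideal cycle strictly above $Z_E$ and force $q(\m)<m$, whereas \lemref{l:h1} applied to $B=E-E_{m-1}$ shows $m$ is attained after a suitable blow-up. For the converse, assume $A$ Gorenstein with $Z_E$ the maximal ideal cycle. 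If $E=\supp(E_{min})$ then $A$ is minimally elliptic and $p_g(A)=1=m+1$. Otherwise $Z_EE_{min}=0$ and $\cO_X(-Z_E)$ has no fixed component, so \lemref{l:h1} gives $h^1(\cO_X(-Z_E))=h(B)$ with $B=E-E_{m-1}$; using \proref{p:CX} together with the fact that on the minimal resolution of a Gorenstein singularity the cohomological cycle is $Z_{K_X}$, and the chain structure of (1), one computes that the cohomological cycle on $B$ equals $\sum_{i=1}^mZ_{B_i}$ and has $h^1$ equal to $m$, so $h^1(\cO_X(-Z_E))=m$ and $A$ is maximally elliptic.

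\emph{Main obstacle.} The decisive step in both directions of (2) is the arithmetic-to-analytic equality $h(B)=m$ for the sub-configuration $B=E-E_{m-1}$, i.e.\ showing that the cohomological cycle on $B$ has invariant $h^1$ exactly $m$. This calls for an induction along the elliptic sequence $\{Z_{B_1},\dots,Z_{B_m}\}$ of $B$, tracking that $h^1$ of the partial sums $\sum_{i=j}^mZ_{B_i}$ increases by exactly one at each stage; the Gorenstein hypothesis is what forces the increase at the final stage (so that $h^1$ reaches $m$ rather than stopping short), while the hypothesis that $Z_E$ is the maximal ideal cycle is exactly what makes \lemref{l:h1}, hence this whole computation, applicable to $Z_E$ itself.
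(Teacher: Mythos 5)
Your two reductions are sound and agree with the paper's: the identity $h^1(\cO_X(-Z_E))=p_g(A)-1$ (from $h^0(\cO_{Z_E})=1$ and $\chi(Z_E)=0$), and the observation that $Z_E$ is the maximal ideal cycle exactly when $\cO_X(-Z_E)$ has no fixed component. Your combinatorial derivation of (1) is a reasonable substitute for the paper's citation of Yau, modulo a labeling slip: the unique component with $Z_EE_i=-1$ is the free end $E_0$ of the chain, not the component $E_{m-1}$ attached to $\supp(E_{min})$, so the relevant reduced cycle is $B=E-E_0$. The real problem is that the step you yourself single out as decisive --- $h(B)=m$ --- is never proved, and the mechanism you propose for it is wrong. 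You assert that the Gorenstein hypothesis forces $h^1$ of the partial sums $\sum_{i=j}^mZ_{B_i}$ to increase by exactly one at each stage, with the maximal-ideal-cycle hypothesis serving only to make \lemref{l:h1} applicable to $Z_E$. The paper's own example $A_2=\C\{x,y,z\}/(z^2-y(x^4+y^6))$ refutes this: it is a Gorenstein elliptic hypersurface with $-Z_E^2=1$ and $m=2$, yet $p_g(A_2)=2$; its cohomological cycle $\sum_{i=0}^2Z_i=3E_2+2E_1+E_0$ contains $E_0$, so no cycle supported on $E-E_0$ can reach $h^1=p_g$, and $h(E-E_0)=1\ne m$. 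Hence Gorenstein plus the graph of (1) cannot yield $h(B)=m$; the hypothesis that $Z_E$ is the maximal ideal cycle must enter the cohomological computation itself. That is exactly how the paper argues: a function $h$ with $(h)_E=Z_0$ gives $\cO_{C'_{j+1}}(-C_j)\cong\cO_{C'_{j+1}}$ by \remref{r:triv-bot}, and this triviality, combined with $h^1(\cO_X(-C_m))=0$ from \lemref{l:rohr} and Lemma 2.13 of \cite{o.numGell}, is what forces each step of the filtration to contribute exactly $1$.

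The other direction (maximal ellipticity implies $Z_E$ is the maximal ideal cycle) has the same defect. Your argument that $\cO_X(-Z_E)$ has no fixed component --- ``a fixed component would push the maximal ideal cycle strictly above $Z_E$ and force $q(\m)<m$, whereas $m$ is attained after a suitable blow-up'' --- is not a proof: you do not explain why $M>Z_E$ would force $q(\m)<m$, nor why $q(\m)$ would have to equal $m$, since the value $m$ produced by \lemref{l:h1} after blowing up is a priori attached to some other integrally closed ideal, not to $\m$. The paper instead runs the count backwards: $h^1(\cO_X(-C_0))=m$ and $h^1(\cO_X(-C_m))=0$, with each of the $m$ successive drops bounded by $1$ because $h^0(\cO_{Z_{j+1}}(-C_j))\le 1$ by $1$-connectedness; hence every drop equals $1$, and the resulting surjectivity produces an explicit $h\in H^0(\cO_X(-Z_0))$ restricting to a nowhere-vanishing section of $\cO_{Z_1}(-Z_0)$, whence $(h)_E=Z_0$ by anti-nefness. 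Producing this section is the essential content of the proof, and nothing in your sketch supplies it.
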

\begin{proof}
(1) follows from Corollary 2.3 and Table 1 in \cite{yau.hyper}.
We prove (2).

Let $Z_0\ge \dots \ge  Z_m$ be the elliptic sequence on $E$.
Then  $p_g(A)\le m+1$.
It is easy to see that $Z_i=E_{min}+E_{m-1}+\cdots+E_i$.
Let $C'_j:=\sum_{i=j}^mZ_i$. Note that $\cO_{C'_{j+1}}(-Z_{j})=\cO_{C'_{j+1}}(-Z_l)$ for $l\le j$.

Assume that $A$ is Gorenstein 
and $Z_0=Z_E$ is the maximal ideal cycle.
By \remref{r:triv-bot}, we have $\cO_{C'_{j+1}}(-C_{j})\cong \cO_{C'_{j+1}}$ for $0\le j\le m-1$.
It follows from Grauert-Riemenschneider vanishing theorem (or \lemref{l:rohr}) and \cite[Lemma 2.13]{o.numGell} that
$h^1(\cO_X(-Z_0))=h^1(\cO_X(-C_m))+m=m$.
As in the proof of \lemref{l:paM},
we obtain $p_g(A)=h^1(\cO_X(-Z_0))+1=m+1$.

Conversely, assume that $A$ is a maximally elliptic singularity.
Then $A$ is Gorenstein by \thmref{t:maxGor} and $h^1(\cO_X(-Z_0))=m$.
By \proref{p:Eexists}, we easily see that $Z_j$ is $1$-connected (cf. \cite[3.9]{chap}) for $0 \le j \le m$. 
Since $\chi(\cO_{Z_{j+1}}(-C_j))=\chi(Z_{j+1})-C_jZ_{j+1}=0$, we have 
\[
h^1(\cO_{Z_{j+1}}(-C_j))=h^0(\cO_{Z_{j+1}}(-C_j))\le 1
\]
 by \cite[3.11]{chap}.
From the exact sequence
\[
0\to \cO_X(-C_{j+1}) \to \cO_X(-C_j)\to \cO_{Z_{j+1}}(-C_j)\to 0,
\]
we obtain that $0\le h^1(\cO_X(-C_{j}))-h^1(\cO_X(-C_{j+1}))\le 1$ for $0 \le j \le m-1$. 
Thus $h^1(\cO_X(-C_{j}))=h^1(\cO_X(-C_{j+1}))+1$  for $0 \le j \le m-1$. 
Therefore, by \cite[Lemma 2.13]{o.numGell} again, there exists $h\in H^0(\cO_X(-Z_0))$ which maps to the generator of $H^0(\cO_{Z_1}(-Z_0))\cong H^0(\cO_{Z_1})$.
Then the cycles $(h)_E$ and $Z_0$ coincide on $\supp (Z_1)$.
Since $(h)_E$ is anti-nef, we must have $(h)_E=Z_0$.
This shows that $Z_0$ is the maximal ideal cycle.
\end{proof}

%\marginpar{p.9 at the bottom}

\begin{thm}
Assume that $A$ is not a rational singularity, namely, $p_g(A)>0$.
Then the singularity $A$ is Gorenstein and $\m$ is a $p_g$-ideal if and only if $A$ is a maximally elliptic singularity with $-Z_E^2=1$, where $Z_E$ is the fundamental cycle on $E$.
\end{thm}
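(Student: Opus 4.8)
The plan is to treat the two implications separately, using \lemref{l:maxell} as the main tool together with the criteria for $\m$ to be a $p_g$-ideal in terms of the maximal ideal cycle (\lemref{l:paM}, \thmref{t:mpg}, \corref{c:Gpg}).

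\textbf{The ``if'' part.} Suppose $A$ is maximally elliptic with $-Z_E^2=1$. It is numerically Gorenstein by definition, so \lemref{l:maxell} applies; part (2) gives that $A$ is Gorenstein and that $Z_E$ is the maximal ideal cycle $M$ on the minimal resolution $X$, and its proof yields $h^1(\cO_X(-Z_E))=m=p_g(A)-1$. From the dual graph in \lemref{l:maxell}(1) one computes $Z_E\cdot E_0=-1$ and $Z_E\cdot E_i=0$ for $i\ne0$, so by \remref{r:triv-bot} the sheaf $\cO_X(-Z_E)$ is trivial on $Z_E^{\bot}$ and has degree $1$ on the terminal $(-2)$-curve $E_0$. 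Were $\cO_X(-Z_E)$ generated by global sections it would represent $\m$, forcing $e(\m)=-Z_E^2=1$ and hence $A$ regular; so $\cO_X(-Z_E)$ is not globally generated, and, having no fixed component, its base locus is the single point $p:=H\cap E_0$, where $H$ is the strict transform of $\di_{\spec A}(h)$ for a general $h\in\m$ (note $H\cdot E_i=-Z_E\cdot E_i$, so $H$ meets $E$ only there). Blow up $p$, say $g\colon X_1\to X$ with exceptional curve $E'$, and set $W:=g^*Z_E+E'$. Since every global section of $\cO_X(-Z_E)$ vanishes at $p$, the map $H^0(\cO_{X_1}(-g^*Z_E))\to H^0(\cO_{E'})$ in $0\to\cO_{X_1}(-W)\to\cO_{X_1}(-g^*Z_E)\to\cO_{E'}\to0$ is zero, whence $h^1(\cO_{X_1}(-W))=m+1=p_g(A)$ and $H^0(\cO_{X_1}(-W))=\m$. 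By \proref{p:le-cycle}, $\cO_{X_1}(-W)$ is globally generated, so $\m$ is represented on $X_1$ by $W$ and $q(\m)=p_g(A)$; that is, $\m$ is a $p_g$-ideal.

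\textbf{The ``only if'' part.} Suppose $A$ is not rational, Gorenstein, and $\m$ is a $p_g$-ideal. By \corref{c:Gpg}, $\mult A=2$, and by \lemref{l:paM} and \thmref{t:mpg} we get $\emb A=\mult A+1=3$; thus $A$ is a hypersurface double point and, after a change of coordinates, $A\cong k[[x,y,z]]/(x^2+g(y,z))$ with $g$ square-free. Since $A$ is not rational, $\ord g\ge3$, and by \cite[Example 4.3]{OWYrees} (applicable since $A$ is Gorenstein with $p_g(A)>0$ and $\m$ a $p_g$-ideal) in fact $\ord g=3$. It remains to identify such a double point: resolving the double cover of the plane branched along $\{g=0\}$ shows that the minimal resolution of $A$ has the dual graph of \lemref{l:maxell}(1) with $\chi(Z_E)=0$ and $-Z_E^2=1$ (this is Yau's classification of hypersurface dual graphs \cite{yau.hyper}), and that the elliptic sequence on $E$ has length $p_g(A)$; in particular $A$ is numerically Gorenstein elliptic with $-Z_E^2=1$, $Z_E$ is its maximal ideal cycle, and $p_g(A)$ equals the length of the elliptic sequence, i.e. $A$ is maximally elliptic (cf. \lemref{l:maxell}(2)).

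\textbf{Where the work lies.} The delicate step is, in the ``only if'' part, passing from the algebraic hypotheses — equivalently, $A\cong k[[x,y,z]]/(x^2+g)$ with $\ord g=3$ and $p_g(A)>0$ — to the geometric fact that the minimal resolution has the graph of \lemref{l:maxell}(1) and that $Z_E$ is the maximal ideal cycle, i.e. that $A$ is elliptic of degree $1$. One either invokes Yau's hypersurface classification \cite{yau.hyper}, or carries out the embedded resolution of the plane curve germ $\{g=0\}$ followed by the normalized degree-two base change and reads off $\chi(Z_E)=0$, $-Z_E^2=1$; once this is available, maximality of the elliptic sequence is exactly the cohomological count in the proof of \lemref{l:maxell}(2). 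The ``if'' direction is, by contrast, self-contained, needing only \lemref{l:maxell}, \remref{r:triv-bot} and the single blow-up above.
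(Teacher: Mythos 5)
Your ``if'' direction is correct and essentially the paper's own argument: both proofs blow up the unique base point of $\cO_{X}(-Z_E)$ on the minimal resolution and show that the resulting cycle $W=g^*Z_E+E'$ is a $p_g$-cycle representing $\m$. You finish with a direct long-exact-sequence count ($h^1$ jumps by exactly one because the restriction map to the new exceptional curve vanishes) plus \proref{p:le-cycle}, whereas the paper identifies the cohomological cycle $C_Y$ using $K_{X_0}=-\sum Z_i$ and applies \proref{p:CX}; either route works, and yours avoids the canonical-cycle computation.

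The ``only if'' direction, however, has a genuine gap at its central step. After correctly deducing $\mult A=2$ and $\emb A=3$ from \corref{c:Gpg} and \thmref{t:mpg}, you pass to $\hat A\cong k[[x,y,z]]/(x^2+g(y,z))$ with $\ord g=3$ — already leaning on the algebraic characterization \cite[Example 4.3]{OWYrees} that this theorem is meant to geometrize — and then simply assert that resolving the double cover branched along $\{g=0\}$ yields $\chi(Z_E)=0$, $-Z_E^2=1$, $Z_E$ equal to the maximal ideal cycle, and an elliptic sequence of length $p_g(A)$. None of this is carried out, and it is precisely the content of the theorem. The citation of \cite{yau.hyper} does not close the gap: that paper classifies which weighted dual graphs arise from hypersurfaces, but the dual graph alone cannot give you maximal ellipticity — the paper's own pair $A_1=\C\{x,y,z\}/(x^2+y^3+z^{18})$ and $A_2=\C\{x,y,z\}/(z^2-y(x^4+y^6))$ have the same graph, yet only $A_1$ is maximally elliptic — so on your route a genuine computation with $g$ (an embedded resolution of an arbitrary multiplicity-$3$ branch curve, followed by the normalized double cover) is unavoidable. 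The paper sidesteps all of this by staying on the resolution: since $A$ is Gorenstein there is no $p_g$-cycle on the minimal resolution $X_0$ (\proref{p:CX}), so the maximal ideal cycle $N$ there has a base point, forcing $-N^2<\mult A=2$, hence $-N^2=1$ and $N=Z_E$; then $p_a(M)=0$ on the blow-up gives $K_{X_0}N=1$, so $p_a(Z_E)=1$, i.e.\ $A$ is elliptic, and \lemref{l:maxell}(2) concludes. You should either supply an argument of this kind or actually perform the resolution computation you allude to; as written, the ``only if'' half is not proved.
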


%\marginpar{p.10 in the middle}

\begin{proof}
Let $Y\to \spec A$ be the resolution which is obtained by taking the minimal resolution of the blowing-up of $\m$, and let $M$ be the maximal ideal cycle on $Y$.
Let $X_0\to \spec A$ be the minimal resolution and $\phi\:Y\to X_0$ 
the natural morphism.

Assume that $A$ is Gorenstein and $\m$ is a $p_g$-ideal. 
By \corref{c:Gpg}, $\mult A=-M^2=2$.
Since $A$ is Gorenstein, there does not exists a $p_g$-cycle on the minimal resolution $X_0$  by \proref{p:CX}.
Thus $\phi\:Y\to X_0$ is not an isomorphism.
Let $N=\phi_*M$; this is also the maximal ideal cycle on 
$X_0$.
Since $N$ is not a $p_g$-cycle, $\m$ is not represented by $N$, namely,
$\cO_{X_0}(-N)$ is not generated by global sections.
Therefore $-N^2<\mult A=-M^2=2$. 
This implies that $-N^2=1$, and that $\phi$ is the blowing-up at the unique base point of $\cO_{X_0}(-N)$ and $M=\phi^*N+E_0$, where $E_0$ is the exceptional set of $\phi$.
Let $Z_0$ be the fundamental cycle on $X_0$. Since $Z_0 \le N$ and $0<-Z_0^2\le -N^2=1$, we have $Z_0=N$, namely, $N$ is the fundamental cycle.
Since $p_a(M)=(M^2+K_YM)/2+1=0$ by \lemref{l:paM} and $K_YM=(\phi^*K_{X_0}+E_0)(\phi^*N+E_0)=K_{X_0}N-1$, we obtain that $K_{X_0}N=1$.
Thus $p_a(N)=(N^2+K_{X_0}N)/2+1=1$.
Hence $A$ is an elliptic singularity.
By \lemref{l:maxell}, 
 $A$ is a maximally elliptic singularity.

Conversely, assume that $A$ is a maximally elliptic singularity with $-Z_0^2=1$. 
Then $A$ is Gorenstein and $Z_0$ is the maximal ideal cycle by \lemref{l:maxell}.
There exists $h\in H^0(\cO_{X_0}(-Z_0))$ such that $\di_{X_0}(h)=Z_0+H$, where $H$ has no component of $E$.
 Since $-Z_0^2=1$, we have $HZ_0=1$ and that $\cO_{X_0}(-Z_0)$ has just one base point on $\supp (Z_0) \setminus \supp (Z_1)$ which is resolved by the blowing-up at this point (cf. \cite[4.5]{o.numGell}). 
Then $M=\phi^*Z_0+E_0$ and $C_Y=\phi^*(\sum_{i=0}^mZ_i)-E_0$ since $K_{X_0}=-\sum_{i=0}^mZ_i$ (\cite[Theorem 3.7]{yau.max}, \cite[6.8]{tomari.ell}).
Since $Z_0-Z_1$ is reduced (cf. \lemref{l:maxell}), we have $E_0\not \le C_Y$ and thus $\cO_{C_Y}(-M)\cong \cO_{C_Y}$ by \remref{r:triv-bot}. 
Hence $M$ is a $p_g$-cycle by \proref{p:CX}.
\end{proof}

Let us recall that there exist two hypersurface elliptic singularities with $-Z_E^2=1$ which have the same resolution graph, but have different geometric genus.

%\marginpar{p.10 Example 4.10}

\begin{ex}[Laufer {\cite[\S V]{la.dbl}}, cf. {\cite[2.23]{nem.ellip}}]
Let $A_1=\C\{x,y,z\}/(x^2+y^3+z^{18})$ and $A_2=\C\{x,y,z\}/(z^2-y(x^4+y^6))$. 
Then the exceptional set $E$ of the minimal resolution $X$ of both these singularities consists of an elliptic curve $E_2$ and $(-2)$-curves $E_0$ and $E_1$, and $E=E_2+E_1+E_0$ is a chain of curves such that $E_2E_1=E_1E_0=1$ (the dual graph of $E$ is similar to that in \lemref{l:maxell}).
We have $p_g(A_1)=3$ and $p_g(A_2)=2$.
So $A_1$ is a maximally elliptic singularity.
For $A_2$, we have that the maximal ideal cycle on $X$ is $M=2E_2+2E_1+E_0$, $\cO_X(-M)$ is generated by global sections since $\mult A_2=2=-M^2$ (cf. \cite[4.6]{chap}), and $h^1(\cO_X(-M))=1=p_g(A_2)-1$ (cf. \lemref{l:h1}). 
\end{ex}

\begin{ex}
By \cite[4.5, 6.3]{o.numGell}, for any positive integer $m$, there exists a numerically Gorenstein elliptic singularity $A$ with elliptic sequence $\{Z_0, \dots, Z_m\}$ on the minimal resolution $X$  such that
 $-Z_0^2=1$, 
\begin{gather*}
C_X=Z_1+ \cdots+Z_m, \quad p_g(A)=m, \quad
M_X=Z_0+Z_1,
\\ \emb A-1=\mult A=-M_X^2+1=3,
\end{gather*}
where $M_X$ denotes the maximal ideal cycle on $X$.
This singularity is {\em not} $\Q$-Gorenstein by \cite[6.1]{o.numGell}.
We claim that $\m$ is a $p_g$-ideal.
The base point of $\cO_X(-M_X)$ is a nonsingular point of $C_X$, which is a point in $\supp (Z_1) \setminus \supp (Z_2)$ by \cite[3.1]{o.numGell}.
Let $\phi\: Y\to X$ be the blowing-up at the base point of $\cO_X(-M_X)$ and $F$ the exceptional set of $\phi$.
Then the maximal ideal cycle $M_Y$ on $Y$ is $\phi^*M_X+F$, and the cohomological cycle on $Y$ is $C_Y=\phi^*C_X-F$.
Since $M_YC_Y=M_XC_X-F^2=Z_1^2-F^2=0$, $M_Y$ is a $p_g$-cycle.
\end{ex}

%
%\bibliographystyle{amsplain}
%\bibliography{thebib}

\begin{thebibliography}{10}

\bibitem{artin.rat}
Michael Artin, \emph{On isolated rational singularities of surfaces}, Amer. J.
  Math. \textbf{88} (1966), 129--136.

\bibitem{b-c.rat}
Kurt Behnke and Jan~Arthur Christophersen, \emph{Hypersurface sections and
  obstructions (rational surface singularities)}, Compositio Math. \textbf{77}
  (1991), no.~3, 233--268.

\bibitem{CharRat}
Steven~Dale Cutkosky, \emph{A new characterization of rational surface
  singularities}, Invent. Math. \textbf{102} (1990), no.~1, 157--177.

\bibitem{G-W}
S.~Goto and K.-i. Watanabe, \emph{On graded rings. {I}}, J. Math. Soc. Japan
  \textbf{30} (1978), no.~2, 179--213.

\bibitem{Goto-Shimoda}
Shiro Goto and Yasuhiro Shimoda, \emph{On the {R}ees algebras of
  {C}ohen-{M}acaulay local rings}, Commutative algebra ({F}airfax, {V}a.,
  1979), Lecture Notes in Pure and Appl. Math., vol.~68, Dekker, New York,
  1982, pp.~201--231.

\bibitem{Hun.Hilb}
Craig Huneke, \emph{Hilbert functions and symbolic powers}, Michigan Math. J.
  \textbf{34} (1987), no.~2, 293--318.

\bibitem{HS-book}
Craig Huneke and Irena Swanson, \emph{Integral closure of ideals, rings, and
  modules}, London Mathematical Society Lecture Note Series, vol. 336,
  Cambridge University Press, Cambridge, 2006.

\bibitem{ito.Int}
Shiroh Itoh, \emph{Integral closures of ideals generated by regular sequences},
  J. Algebra \textbf{117} (1988), no.~2, 390--401.

\bibitem{kato}
Masahide Kato, \emph{Riemann-{R}och theorem for strongly pseudoconvex manifolds
  of dimension {$2$}}, Math. Ann. \textbf{222} (1976), no.~3, 243--250.

\bibitem{la.me}
Henry~B. Laufer, \emph{On minimally elliptic singularities}, Amer. J. Math.
  \textbf{99} (1977), no.~6, 1257--1295.

\bibitem{la.dbl}
\bysame, \emph{On normal two-dimensional double point singularities}, Israel J.
  Math. \textbf{31} (1978), no.~3-4, 315--334.

\bibitem{Li}
Joseph Lipman, \emph{Rational singularities, with applications to algebraic
  surfaces and unique factorization}, Inst. Hautes \'Etudes Sci. Publ. Math.
  (1969), no.~36, 195--279.

\bibitem{mo.rr}
M.~Morales, \emph{Calcul de quelques invariants des singularit\'es de surface
  normale}, Knots, braids and singularities (Plans-sur-Bex, 1982), Monogr.
  Enseign. Math., vol.~31, Enseignement Math., Geneva, 1983, pp.~191--203.

\bibitem{mo.Cl}
\bysame, \emph{Cl\^oture int\'egrale d'id\'eaux et anneaux gradu\'es
  {C}ohen-{M}acaulay}, G\'eom\'etrie alg\'ebrique et applications, {I} ({L}a
  {R}\'abida, 1984), Travaux en Cours, vol.~22, Hermann, Paris, 1987,
  pp.~151--171. \MR{907911}

\bibitem{nem.ellip}
A.~N{\'e}methi, \emph{``{W}eakly'' elliptic {G}orenstein singularities of
  surfaces}, Invent. Math. \textbf{137} (1999), no.~1, 145--167.

\bibitem{OWYcore}
T.~{Okuma}, K.-i. {Watanabe}, and K.-i. {Yoshida}, \emph{{A characterization of
  two-dimensional rational singularities via Core of ideals}},
  arXiv:1511.01553, to appear in Journal of Algebra.

\bibitem{OWYgood}
\bysame, \emph{Good ideals and {$p_g$}-ideals in two-dimensional normal
  singularities}, Manuscripta Math. \textbf{150} (2016), no.~3-4, 499--520.

\bibitem{OWYrees}
\bysame, \emph{Rees algebras and {$p_g$}-ideals in a two-dimensional normal
  local domain}, Proc. Amer. Math. Soc. \textbf{145} (2017), no.~1, 39--47.

\bibitem{o.numGell}
Tomohiro Okuma, \emph{Numerical {G}orenstein elliptic singularities}, Math. Z.
  \textbf{249} (2005), no.~1, 31--62.

\bibitem{chap}
M.~Reid, \emph{Chapters on algebraic surfaces}, Complex algebraic geometry,
  IAS/Park City Math. Ser., vol.~3, Amer. Math. Soc., Providence, RI, 1997,
  pp.~3--159.

\bibitem{rohr}
A.~R{\"o}hr, \emph{A vanishing theorem for line bundles on resolutions of
  surface singularities}, Abh. Math. Sem. Univ. Hamburg \textbf{65} (1995),
  215--223.

\bibitem{sally.tangent}
Judith~D. Sally, \emph{Tangent cones at {G}orenstein singularities}, Compositio
  Math. \textbf{40} (1980), no.~2, 167--175.

\bibitem{tomari.ell}
Masataka Tomari, \emph{A {$p_g$}-formula and elliptic singularities}, Publ.
  Res. Inst. Math. Sci. \textbf{21} (1985), no.~2, 297--354.

\bibitem{tki-w}
Masataka Tomari and Kei-ichi Watanabe, \emph{Filtered rings, filtered
  blowing-ups and normal two-dimensional singularities with ``star-shaped''
  resolution}, Publ. Res. Inst. Math. Sci. \textbf{25} (1989), no.~5, 681--740.

\bibitem{wag.ell}
Philip Wagreich, \emph{Elliptic singularities of surfaces}, Amer. J. Math.
  \textbf{92} (1970), 419--454.

\bibitem{yau.gor}
Stephen Shing~Toung Yau, \emph{Gorenstein singularities with geometric genus
  equal to two}, Amer. J. Math. \textbf{101} (1979), no.~4, 813--854.

\bibitem{yau.hyper}
\bysame, \emph{Hypersurface weighted dual graphs of normal singularities of
  surfaces}, Amer. J. Math. \textbf{101} (1979), no.~4, 761--812.

\bibitem{yau.max}
\bysame, \emph{On maximally elliptic singularities}, Trans. Amer. Math. Soc.
  \textbf{257} (1980), no.~2, 269--329.

\end{thebibliography}

\providecommand{\bysame}{\leavevmode\hbox to3em{\hrulefill}\thinspace}
\providecommand{\MR}{\relax\ifhmode\unskip\space\fi MR }
% \MRhref is called by the amsart/book/proc definition of \MR.
\providecommand{\MRhref}[2]{%
  \href{http://www.ams.org/mathscinet-getitem?mr=#1}{#2}
}
\providecommand{\href}[2]{#2}

\end{document}